\patchcmd{\thebibliography}{\chapter*}{\section*}{}{}
\newtheorem*{theorem-non}{Theorem}
\newcommand{\C}{\mathcal{C}}
\newcommand{\R}{\mathbb{R}}
\newcommand{\B}{\mathcal{B}}
\newcommand{\I}{\mathcal{I}}
\newcommand{\M}{\mathcal{M}}
\newcommand{\s}{\oplus_2}
\newcommand{\conv}{\mathrm{conv}}
\newcommand{\calB}{\mathcal{B}}
\newcommand{\calF}{\mathcal{F}}
\newcommand{\rk}{\mbox{rk}}
\newcommand{\stab}{\mathrm{STAB}}
\newcommand{\Hans}{\mathrm{Hans}}
\newtheorem{conj}[theorem]{Conjecture}
\newtheorem{eexample}{Example}
\begin{document}

\title{{On $2$-level polytopes \\ arising in combinatorial settings}\thanks{The third author was partially supported by the Swiss National Science Foundation.}}
\author{Manuel Aprile\inst{1} \and Alfonso Cevallos\inst{2} \and Yuri Faenza\inst{3}}
\institute{DISOPT, \'{E}cole Polytechnique F\'{e}d\'{e}rale de Lausanne, Switzerland.\\ Email: \texttt{manuel.aprile@epfl.ch} \and 
	Department of Mathematics, ETH Zurich, Switzerland.\\ Email: \texttt{alfonso.cevallos@ifor.math.ethz.ch} \and
	IEOR, Columbia University, USA.\\ Email: \texttt{yf2414@columbia.edu}}
\date{}
\maketitle

\begin{abstract}
	$2$-level polytopes naturally appear in several areas of pure and applied mathematics, including combinatorial optimization, polyhedral combinatorics, communication complexity, and statistics. In this paper, we present a study of some $2$-level polytopes arising in combinatorial settings. Our first contribution is proving that $f_0(P)f_{d-1}(P)\leq d2^{d+1}$ for a large collection of families of such polytopes $P$. Here $f_0(P)$ (resp. $f_{d-1}(P)$) is the number of vertices (resp. facets) of $P$, and $d$ is its dimension. Whether this holds for all 2-level polytopes was asked in~\cite{bohn2015enumeration}, and experimental results from~\cite{Fiorini2016} showed it true for $d\leq 7$. The key to most of our proofs is a deeper understanding of the relations among those polytopes and their underlying combinatorial structures. This leads to a number of results that we believe to be of independent interest: a trade-off formula for the number of cliques and stable sets in a graph; a description of stable matching polytopes as affine projections of certain order polytopes; and a linear-size description of the base polytope of matroids that are $2$-level in terms of cuts of an associated tree. 
	

\end{abstract}











\section{Introduction}\label{sec:intro}

Let $P\subseteq \R^d$ be a polytope. We say that $P$ is \emph{$2$-level} if, for each facet $F$ of $P$, all the vertices of $P$ that are not vertices of $F$ lie in the same translate of the affine hull of $F$. Equivalently, $P$ is $2$-level if and only if it has theta-rank $1$~\cite{Gouveia10}, or all its pulling triangulations are unimodular~\cite{Sullivant06}, or it has a slack matrix with entries in $\{0,1\}$~\cite{bohn2015enumeration}. Those last three definitions appeared in papers from the semidefinite programming, statistics, and polyhedral combinatorics communities respectively, showing that $2$-level polytopes naturally arise in many areas of mathematics. 

$2$-level polytopes generalize Birkhoff~\cite{ziegler1995lectures}, Hanner~\cite{Hanner56}, and Hansen polytopes~\cite{Hansen77}, order polytopes and chain polytopes of posets~\cite{Stanley86}, spanning tree polytopes of series-parallel graphs~\cite{Grande14}, stable matching polytopes~\cite{gusfield1989stable}, some min up/down polytopes~\cite{lee2004min}, and stable set polytopes of perfect graphs~\cite{Chvatal75}. A fundamental result in polyhedral combinatorics shows that the linear extension complexity of polytopes from the last class is subexponential in the dimension~\cite{yannakakis1991expressing}. Whether this upper bound can be pushed down to polynomial is a major open problem. For $2$-level polytopes, the situation is even worse: no non-trivial bound is known for their linear extension complexity. On the other hand, $2$-level polytopes admit a ``smallest possible'' semidefinite extension, i.e. of size $d+1$, with $d$ being the dimension of the polytope~\cite{Gouveia10}\footnote{Although $2$-level polytopes are not the only polytopes with this property, they are exactly the class of polytopes for which such an extension can be obtained via a certain hierarchy. See~\cite{Gouveia10} for details.}. Hence, they are prominent candidates for showing the existence of a strong separation between the expressing power of exact semidefinite and linear extensions of polytopes. Interest in $2$-level polytopes is also motivated by their connection to the prominent \emph{log-rank conjecture} in communication complexity~\cite{Lovasz1988lattices}. If this were true, then $2$-level polytopes would admit a linear extension of subexponential size. We defer details on this to the end of the current section.

\begin{figure}[h]
	\includegraphics[width=\textwidth]{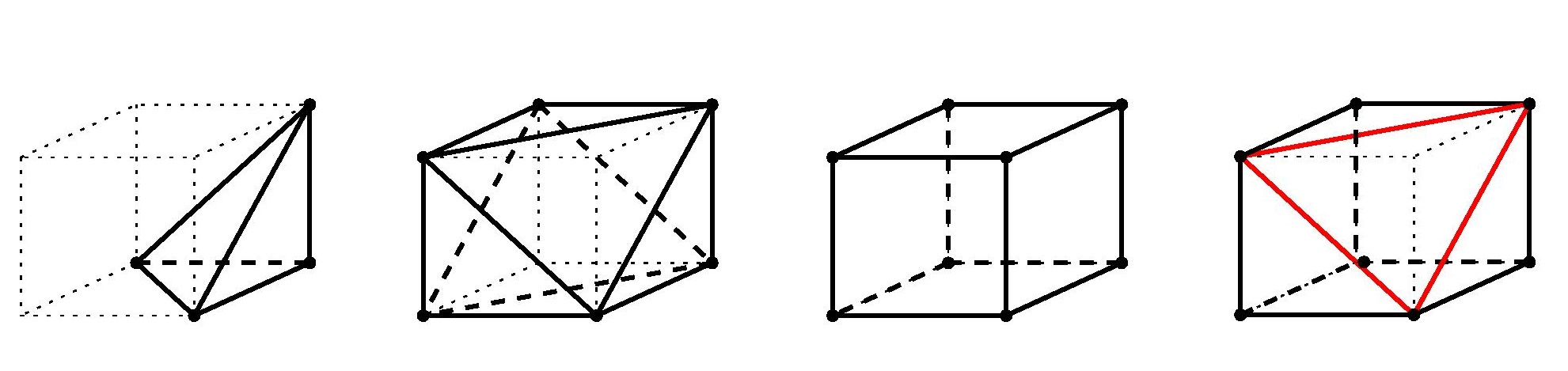}
	\label{pic:2-level}
	\caption{The first three polytopes (the simplex, the cross-polytope and the cube) are 2-level. The fourth one is not 2-level, because of the highlighted facet. }
\end{figure}

Because of their relevance, a solid understanding of $2$-level polytopes would be desirable. Unfortunately, and despite an increasing number of recent studies~\cite{WG17,bohn2015enumeration,Gouveia10,Grande15b,Grande14}, such an understanding has not been obtained yet. We do not have e.g. any decent bound on the number of $d$-dimensional $2$-level polytopes, nor do we have a structural theory of their slack matrices, of the kind that has been developed for totally unimodular matrices; see e.g.~\cite{schrijver1998theory}. 

Still, those works have suggested promising directions of research, either by characterizing specific classes of $2$-level polytopes, or by proving general properties. For instance, in~\cite{Grande14}, by building on Seymour's decomposition theorem for $3$-connected matroids and on the description of uniform matroids in terms of forbidden minors, a characterization of $2$-level polytopes that are base polytopes of matroids is given. On the other hand, it is shown in~\cite{Gouveia10} that each $d$-dimensional $2$-level polytope is affinely isomorphic to a $0/1$ polytope, hence it has at most $2^d$ vertices. Interestingly, the authors of~\cite{Gouveia10} also showed that a $d$-dimensional $2$-level polytope also has at most $2^d$ facets. This makes $2$-level polytopes quite different from ``random'' $0/1$ polytopes, that have $(d/\log d)^{\Theta(d)}$ facets~\cite{barany20010}. Experimental results from~\cite{bohn2015enumeration,Fiorini2016} suggest that this separation could be even stronger: up to $d=7$, the product of the number of facets $f_{d-1}(P)$ and the number of vertices $f_0(P)$ of a $d$-dimensional  $2$-level polytope $P$ does not exceed $d2^{d+1}$. In~\cite{bohn2015enumeration}, it is asked whether this always holds, and in their journal version the question is turned into a conjecture.

\begin{conj}[Vertex/facet trade-off]\label{main-conjecture}
	Let $P$ be a $d$-dimensional $2$-level polytope. Then $$f_0(P)f_{d-1}(P)\leq d2^{d+1}.$$ Moreover, equality is achieved if and only if $P$ is affinely isomorphic to the cross-polytope or the cube.  
\end{conj}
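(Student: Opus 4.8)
\medskip
\noindent\textbf{Proof proposal.} The conjecture in full generality appears out of reach with current techniques: there is no structure theory for arbitrary $2$-level polytopes, and the only general facts available are $f_0(P)\le 2^d$ and $f_{d-1}(P)\le 2^d$ from~\cite{Gouveia10}, which together give only $f_0(P)f_{d-1}(P)\le 4^d$, far above $d2^{d+1}$ for large $d$. The plan is therefore to establish the inequality --- and the equality characterization --- for each of the combinatorially defined families listed in Section~\ref{sec:intro}: Hanner and Hansen polytopes, order (and chain) polytopes of posets, stable set polytopes of perfect graphs, stable matching polytopes, spanning tree polytopes of series-parallel graphs, $2$-level matroid base polytopes, and min up/down polytopes. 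For each family the recipe is the same --- give an explicit combinatorial description of both the vertex set and the facet set, then estimate the product --- and the saving of the factor $\approx d/2$ over the trivial $4^d$ always has the same flavour: whenever a polytope in the family has close to the maximum $2^d$ vertices, its facets are few and highly structured (roughly $2d$ of them), and conversely.

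\medskip
\noindent\textbf{Hanner polytopes and the equality cases.} Hanner polytopes are exactly those obtained from the segment by iterating $P\mapsto P\times Q$ and its polar-dual operation $P\mapsto\conv(P\cup Q)$. For a product $P=P_1\times P_2$ with $\dim P_i=d_i\geq 1$ one has $f_0(P)=f_0(P_1)f_0(P_2)$ and $f_{d-1}(P)=f_{d_1-1}(P_1)+f_{d_2-1}(P_2)$, hence
\begin{align*}
 f_0(P)f_{d-1}(P)&=f_0(P_2)\bigl[f_0(P_1)f_{d_1-1}(P_1)\bigr]+f_0(P_1)\bigl[f_0(P_2)f_{d_2-1}(P_2)\bigr]\\
 &\le d_1 2^{d+1}+d_2 2^{d+1}=d2^{d+1},
\end{align*}
using the inductive hypothesis and $f_0(P_i)\le 2^{d_i}$; the polar operation is handled by duality, and the segment is the base case. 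Chasing equality forces $f_0(P_1)=2^{d_1}$ and $f_0(P_2)=2^{d_2}$ (or the dual statement), so by the elementary fact that a $2$-level polytope with $2^d$ vertices is a cube both factors are cubes (resp.\ cross-polytopes), and $P$ is a cube or a cross-polytope; these indeed attain equality. Hansen polytopes will be treated either by reducing them to this list or by the same product/sum bookkeeping applied to the graph operations that build them.

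\medskip
\noindent\textbf{Posets, perfect graphs, matchings, matroids.} For the order polytope of a $d$-element poset $Q$, vertices correspond to order filters and facets to the cover relations of $Q$ together with its minimal and maximal elements, which turns the bound into a combinatorial statement about $Q$ proved by induction on $|Q|$. For stable set polytopes of perfect graphs, the vertices are the stable sets and, by the theory of perfect graphs~\cite{Chvatal75}, the facets are the $n$ nonnegativity constraints and the maximal-clique constraints, so the inequality becomes a \emph{trade-off between the number of stable sets and the number of maximal cliques of an $n$-vertex graph} --- a statement we believe to be of independent interest, which we prove by induction on $n$, splitting stable sets and cliques according to whether a prescribed vertex is used; this case also subsumes chain polytopes, since the chain polytope of $Q$ is the stable set polytope of the comparability graph of $Q$. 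For stable matching polytopes we first prove they are affine projections of the order polytope of the associated rotation poset, checking that this projection identifies neither vertices nor facets in an uncontrolled way, which reduces the case to that of order polytopes. For $2$-level matroid base polytopes we use the decomposition of such matroids along $1$- and $2$-sums into uniform pieces: this gives a linear-size facet description in terms of cuts of an associated tree, while the number of bases is multiplicative under these sums, so the bound follows by induction on the tree with the hypersimplices as base case (equality occurring only at $\Delta_{2,4}$, i.e.\ the octahedron); spanning tree polytopes of series-parallel graphs are the special case of graphic matroids, and min up/down polytopes are handled by a direct description of their vertices and facets.

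\medskip
\noindent\textbf{Main obstacle.} There is no single unifying argument: the proof is a union of family-by-family analyses, and the genuinely new ingredients --- chiefly the stable-sets/cliques trade-off inequality and the order-polytope model for stable matching polytopes --- are each somewhat delicate. The hardest single point is likely the extremal analysis in the perfect-graph case: establishing not merely the trade-off inequality but that equality holds only for edgeless graphs (whose stable set polytopes are cubes) requires a careful tightness analysis of the induction, with a genuine case distinction on the local structure around the deleted vertex, rather than of the bare inequality. Finally, it must be stressed that combining all of these families says nothing about a $2$-level polytope not of one of the listed combinatorial types, so the conjecture itself remains open.
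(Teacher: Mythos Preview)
Your overall plan matches the paper's: the conjecture is left open in general, and one proves it family by family, with the two genuinely new ingredients being exactly the ones you single out --- a clique/stable-set trade-off inequality for graphs, and the affine equivalence between stable matching polytopes and order polytopes of the rotation poset. The matroid case also proceeds, as you say, by induction along the $2$-sum tree with hypersimplices at the leaves. A few of your proposed proof routes, however, differ from the paper's in ways that are worth knowing.

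For the clique/stable-set trade-off the paper does \emph{not} argue by induction on $n$. It considers the map $f:\C\times\mathcal{S}\to 2^V$, $f(C,S)=C\cup S$, and shows directly that $|f^{-1}(W)|\le 2|W|$ for every $W\subseteq V$ with $|W|\ge 2$ (at most $|W|$ intersecting pairs and at most $|W|$ disjoint pairs), then sums over $W$. This yields the sharp bound $|\C|\cdot|\mathcal{S}|\le n(2^n-1)$, and the tightness analysis you flag as the ``hardest single point'' becomes a one-line observation: any $3$-element $W$ inducing one or two edges has $|f^{-1}(W)|=5<6$. Your induction may well go through, but the direct counting is shorter and makes the equality characterization immediate.

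For order polytopes the paper does not bound (\#\,filters)$\times$(\#\,cover relations $+$ \#\,extrema) by a separate induction. It observes that the chain polytope $\mathcal{C}(Q)$ equals the stable set polytope of the (perfect) comparability graph of $Q$, hence is already covered; then Stanley's equality $f_0(\mathcal{O}(Q))=f_0(\mathcal{C}(Q))$ together with Hibi's inequality $f_{d-1}(\mathcal{O}(Q))\le f_{d-1}(\mathcal{C}(Q))$ transfers the bound to $\mathcal{O}(Q)$ for free.

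For Hansen and min up/down polytopes the paper does not use Hanner-style product/sum bookkeeping. It shows in each case that $f_0=2|\mathcal{S}'|$ and $f_{d-1}=2|\C'|$ for an explicit $(d-1)$-vertex graph, and then the corollary $|\C'|\cdot|\mathcal{S}'|\le d\,2^{d-1}$ of the trade-off theorem gives the bound in one line. Finally, the paper also treats two families absent from your list --- double order polytopes and $2$-level cycle polytopes of binary matroids (generalizing cut polytopes) --- by analogous reductions.
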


It is immediate to check that the cube and the cross-polytope (its polar) indeed verify $f_0(P)f_{d-1}(P)=d2^{d+1}$. Conjecture~\ref{main-conjecture} has an interesting interpretation as an upper bound on the ``size'' of slack matrices of $2$-level polytopes, since $f_0(P)$ (resp. $f_{d-1}(P)$) is the number of columns (resp. rows) of the (smallest) slack matrix of $P$. Many fundamental results on linear extensions of polytopes are based on properties of their slack matrices. We believe that advancements on Conjecture~\ref{main-conjecture} may lead to precious insights on the structure of (the slack matrices of) $2$-level polytopes, similarly to how progresses on e.g. the outstanding Hirsch~\cite{Santos} and $3^d$ conjectures for centrally symmetric polytopes~\cite{Kalai89} shed some light on our general understanding of polytopes. 

\medskip

\noindent {\bf Our contributions.} The goal of this paper is to present a study of $2$-level polytopes arising from combinatorial settings. Our main results are the following:

\begin{itemize}
	\item[$\bullet$] We give considerable evidence supporting Conjecture~\ref{main-conjecture} by proving it for several classes of $2$-level polytopes arising in combinatorial settings. These include all those cited in the paper so far, plus double order polytopes, and all matroid cycle polytopes that are $2$-level. We moreover show examples of $0/1$ polytopes with a simple structure (including spanning tree and forest polytopes) that are not $2$-level and \emph{do not} satisfy Conjecture~\ref{main-conjecture}. This suggests that, even though there are clearly polytopes that are not $2$-level and satisfy Conjecture~\ref{main-conjecture}, $2$-levelness seem to be the ``appropriate'' hypothesis to prove a general and meaningful result. We also investigate extensions of the conjecture in terms of matrices and systems of linear  inequalities.
	\item[$\bullet$] We establish new properties of many classes of 2-level polytopes, of their underlying combinatorial objects, and of their inter-class connections.  
	These results include: a trade-off formula for the number of stable sets and cliques in a graph; 
	a description of the stable matching polytope as an affine projection of the order polytope of the associated rotation poset; 
	and a compact linear description of $2$-level base polytopes of matroids in terms of cuts of some trees associated to those matroids (notably, our description has linear size in the dimension and  can be written down explicitly in polynomial time).
	These results simplify the algorithmic treatment of some of these polytopes, as well as provide a deeper combinatorial understanding of them.
	At a more philosophical level, these examples suggest that being $2$-level is a very attractive feature for a (combinatorial) polytope, since it seems to imply a well-behaved underlying structure.
\end{itemize}

\medskip

\noindent {\bf Organization of the paper.} We introduce some basic definitions and techniques in Section~\ref{sec:basic}: those are enough to show that Conjecture~\ref{main-conjecture} holds for Birkhoff and Hanner polytopes. 
In Section~\ref{sec:stable-vs-clique}, we first prove an upper bound on the product of the number of stable sets and cliques of a graph (see Theorem~\ref{thr:stables-cliques}). We then prove Conjecture~\ref{main-conjecture} for stable set polytopes of perfect graphs, Hansen polytopes, min up-down polytopes, order, double order and chain polytopes of posets, and stable matching polytopes, by reducing these results to statements on stable sets and cliques of associated graphs, which are also proved in Section~\ref{sec:stable-vs-clique}. Hence, we call all those \emph{graphical} $2$-level polytopes. 
Of particular interest is our observation that stable matching polytopes are affinely equivalent to order polytopes (see Theorem~\ref{marriage}). 
In Section~\ref{sec:matroids}, we give a compact 
description of $2$-level base polytopes of matroids (see Theorem~\ref{thr:matroid}) and a proof of Conjecture~\ref{main-conjecture} for this class (see Lemma~\ref{thr:conj-matroid}). In Section~\ref{sec:cycle-matroid}, we prove the conjecture for the cycle polytopes of certain binary matroids, which generalize all cut polytopes that are $2$-level. In Section~\ref{sec:conclusion}, we investigate possible extensions of the conjecture.

\medskip

%

\noindent {\bf Related work.} We already mentioned the paper~\cite{bohn2015enumeration} that provides an algorithm based on the enumeration of closed sets to list all $2$-level polytopes, as well as papers~\cite{Gouveia10,Grande14,Sullivant06} where equivalent definitions and/or families of $2$-level polytopes are given. In~\cite{Fiorini2016}, the algorithm from~\cite{bohn2015enumeration} is extended and the geometry of 2-level polytopes with some prescribed facet is studied. Among other results, in~\cite{Gouveia10} it is shown that the stable set polytope of a graph $G$ is $2$-level if and only if $G$ is perfect. A characterization of all base polytopes of matroids that are $2$-level is given in~\cite{Grande14}, building on the decomposition theorem for matroids that are not $3$-connected (see e.g.~\cite{oxley2006matroid}). 
A similar characterization of 2-level cycle polytopes of matroids (which generalize cut polytopes) is given in~\cite{gouveia2012new}.

As already pointed out, $2$-level polytopes play an important role in the theory of linear and semidefinite extensions. The (\emph{linear}) \emph{extension complexity} $xc(P)$ of a polytope $P$ has recently imposed itself as a important measure of the complexity of $P$. It is the minimum number of inequalities in a linear description of an extended formulation for $P$. In~\cite{yannakakis1991expressing}, it is shown that the stable set polytope of a perfect graph with $d$ nodes has extension complexity $d^{O(\log d)}$. Whether this bound can be improved or extended to all $2$-level polytopes is unknown. On the other hand, the \emph{semidefinite extension complexity} of a $d$-dimensional $2$-level polytope is $d+1$~\cite{Gouveia10}. $2$-level polytopes are therefore a good candidate to show an exponential separation between the power of exact semidefinite and linear formulations for $0/1$ polytopes. The \emph{log-rank conjecture} aims at understanding the amount of information that needs to be exchanged between two parties in order to compute an input $0/1$ matrix. Exact definitions are not relevant for the scope of this paper, and we refer the interested reader to e.g.~\cite{lovett2014recent}. Here it is enough to note that, if true, this conjecture would imply that the extension complexity of a $d$-dimensional $2$-level polytope is at most $2^{poly \log (d)}$, hence subexponential. 

\medskip

\noindent {\bf About this version.} A preliminary version of this paper appeared in~\cite{ISCOus}, as well as in an earlier version~\cite{aprile2017on} of the present arXiv paper. 
The current full version contains the following additional material: a treatment of min up/down polytopes (and their relation to Hansen polytopes), chain polytopes and double order polytopes of posets, and stable matching polytopes. 
Furthermore, we simplified the proof that 2-level matroid base polytopes satisfy the conjecture, as well as the proof of their complete description in the original space. 
We also significantly extended the discussion on possible generalizations of the conjecture. 
On the other hand, the preliminary version \cite{aprile2017on,ISCOus} also contains a characterization of flacets of the $2$-sum of matroids, as well as an alternative proof of the (known) characterization of $2$-level matroid polytopes (Theorem~\ref{thr:matroid-2-level}).

\section{Basics}\label{sec:basic}

We let $\R_+$ be the set of non-negative real numbers. For a set $S$ and an element $e$, we denote by $A+e$ and $A-e$ the sets $A\cup \{e\}$ and $A\setminus \{e\}$, respectively. For a point $x\in\R^I$, where $I$ is an index set, and a subset $J\subseteq I$, we let $x(J)=\sum_{i\in J} x_i$.

For basic definitions about polytopes and graphs, we refer the reader to~\cite{ziegler1995lectures} and~\cite{diestel2005graph}, respectively. 
The polar of a polytope $P\subseteq \R^d$ is the polyhedron $P^{\triangle}=\{y \in \R^d : x^\intercal y \leq 1, \text{ for all } x\in P\}$. 
It is well known\footnote{It immediately follows from e.g. \cite[Theorem 2.11]{ziegler1995lectures}.} that, if $P\subseteq \R^d$ is a $d$-dimensional polytope with the origin in its interior, then so is $P^{\triangle}$, and one can define a one-to-one mapping between vertices (resp. facets) of $P$ and facets (resp. vertices) of $P^\triangle$. Thus, a polytope as above and its polar will simultaneously satisfy or not satisfy Conjecture~\ref{main-conjecture}. The $d$-dimensional \emph{cube} is $[-1,1]^d$, and the $d$-dimensional \emph{cross-polytope} is its polar. A 0/1 polytope is the convex hull of a subset of the vertices of $\{0,1\}^d$. The following facts will be used many times:
\begin{lemma}\label{lem:2-level-2^d}\cite{Gouveia10}
	Let $P$ be a 2-level polytope of dimension $d$. Then 
	\begin{enumerate}
		\item $f_0(P), f_{d-1}(P)\leq 2^d$.
		\item Any face of $P$ is again a 2-level polytope.
	\end{enumerate}
\end{lemma}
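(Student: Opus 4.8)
The plan is to treat the two items separately. Item~2 is elementary and I would prove it directly; for item~1, the bound $f_0(P)\le 2^d$ follows from the standard fact that a $2$-level polytope is affinely isomorphic to a $0/1$ polytope, while $f_{d-1}(P)\le 2^d$ is the delicate point, which I would ultimately settle by invoking \cite{Gouveia10} after reducing it to a combinatorial statement about the (necessarily $0/1$) slack matrix of $P$. For item~2, I would first note that a face of a face is a face and that every proper face of a polytope is contained in some facet; by induction on $\dim P$ this reduces everything to the single claim: \emph{every facet $F$ of a $2$-level polytope $P$ is again $2$-level}.

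To prove that claim, let $G$ be a facet of $F$, so $\dim G=d-2$ and $G$ is a face of $P$. Since $G$ equals the intersection of the facets of $P$ containing it, and there must be at least two of them (one being $F$), there is a facet $F'\neq F$ of $P$ with $G\subseteq F\cap F'$; as $F\cap F'$ is a face of $F$ of dimension at most $d-2$ containing $G$, we get $G=F\cap F'$. Now apply $2$-levelness of $P$ with respect to $F'$: all vertices of $P$ off $F'$ lie on a fixed hyperplane $H'$ that is a translate of $\aff(F')$. The vertices of $F$ split into those on $F'$ — exactly the vertices of $F\cap F'=G$ — and those off $F'$, which therefore all lie on $H'\cap\aff(F)$. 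Since $F\neq F'$ are facets, $\aff(F)\neq\aff(F')$, so these two hyperplanes meet transversally in $\aff(G)$, and hence $H'\cap\aff(F)$ is a translate of $\aff(G)$ by a direction inside $\aff(F)$. Thus $F$ is $2$-level with respect to $G$, and since $G$ was arbitrary, $F$ is $2$-level.

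For $f_0(P)\le 2^d$ I may assume $P$ is full-dimensional in $\R^d$. The outer facet normals span $\R^d$ (otherwise $P$ would be unbounded), so I can pick facets with linearly independent normals $a_1,\dots,a_d$. By $2$-levelness, on $\vertex(P)$ the functional $\langle a_i,\cdot\rangle$ takes exactly two values $\beta_i>\beta_i'$; setting $\ell_i(x)=(\langle a_i,x\rangle-\beta_i')/(\beta_i-\beta_i')$, the affine map $\phi=(\ell_1,\dots,\ell_d)$ has invertible linear part, hence is an affine isomorphism, and it sends $\vertex(P)$ into $\{0,1\}^d$. So $\phi(P)$ is a $0/1$ polytope and $f_0(P)=f_0(\phi(P))\le 2^d$; the cube shows equality can occur.

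Finally, $f_{d-1}(P)\le 2^d$. Here one cannot simply pass to the polar: the polar of a $2$-level polytope need not be $2$-level — e.g.\ the $2$-level triangular prism $\Delta_2\times[0,1]$ has polar combinatorially a bipyramid, which admits no $2$-level realization. Instead I would use the $0/1$ slack matrix $S$ granted by $2$-levelness, of size $f_{d-1}(P)\times f_0(P)$. Its rows are pairwise incomparable under the coordinatewise order (if $\vertex(F')\subseteq\vertex(F)$ then $F'\subseteq F$, impossible for distinct facets), and they span the row space of $S$, which has dimension $d+1$ and contains the all-ones vector $\mathbf 1$. The bound then reduces to the combinatorial statement that an antichain among the $0/1$ points of a $(d+1)$-dimensional subspace containing $\mathbf 1$ has at most $2^d$ elements. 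I expect this last step to be the main obstacle of the whole argument; it is consistent with the extremal cases (the cube, where $f_{d-1}=2d$, and the cross-polytope, where $f_{d-1}=2^d$), and it is exactly what is established in \cite{Gouveia10}, which I would cite for this point.
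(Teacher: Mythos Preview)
The paper gives no proof of this lemma; it simply records both parts as facts from \cite{Gouveia10}. Your proposal is therefore strictly more detailed than the paper's treatment, and the parts you do prove are correct: the argument for item~2 (reducing by induction to facets, then using that every $(d-2)$-face $G$ of $P$ is the intersection of two facets $F,F'$ and that $2$-levelness of $P$ with respect to $F'$ forces the vertices of $F$ off $G$ onto a single translate of $\aff(G)$ inside $\aff(F)$) is clean and valid, as is your affine embedding of $P$ into $\{0,1\}^d$ giving $f_0(P)\le 2^d$. Your example showing that polarity does not preserve $2$-levelness (the triangular prism versus the triangular bipyramid, which is indeed not among the five $3$-dimensional $2$-level combinatorial types) is also correct.

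The one point to flag is your treatment of $f_{d-1}(P)\le 2^d$. The structural facts you invoke about the $0/1$ slack matrix --- rank $d+1$, rows pairwise incomparable, $\mathbf 1$ in the row space --- are all correct, but the combinatorial statement you isolate (``any antichain of $0/1$ vectors in an arbitrary $(d+1)$-dimensional real subspace containing $\mathbf 1$ has at most $2^d$ elements'') is phrased in greater generality than what is needed, and you should verify that \cite{Gouveia10} actually proves it in this form rather than obtaining the facet bound by a more polytope-specific argument. If it does not, your reduction has a small gap at precisely the step you defer. Since the paper itself defers the entire lemma to that same reference, your write-up is in any case at least as complete as the paper's, and considerably more informative; just be precise about what exactly is being cited.
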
 

One of the most common operation with polytopes is the \emph{Cartesian product}. Given two polytopes $P_1\subseteq \R^{d_1}$, $P_2 \subseteq \R^{d_2}$, their Cartesian product is $P_1\times P_2=\{(x,y)\in \R^{d_1+d_2}: x\in P_1, y\in P_2\}$. This operation will be useful to us as it preserves 2-levelness and the bound of Conjecture~\ref{main-conjecture}.


\begin{lemma}\label{lem:cartesian-products-are-good}
	Two polytopes $P_1, P_2$ are 2-level if and only if their Cartesian product $P_1\times P_2$ is 2-level. Moreover, if two 2-level polytopes $P_1$ and $P_2$ satisfy Conjecture~\ref{main-conjecture}, then so does $P_1\times P_2$.
\end{lemma}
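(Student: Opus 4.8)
The plan is to handle the two assertions separately, using throughout the standard description of the face lattice of a Cartesian product: every nonempty face of $P_1\times P_2$ has the form $G_1\times G_2$ with $G_i$ a nonempty face of $P_i$. In particular the vertices of $P_1\times P_2$ are exactly the pairs $(v_1,v_2)$ with $v_i$ a vertex of $P_i$, so $f_0(P_1\times P_2)=f_0(P_1)f_0(P_2)$; and, provided $d_1:=\dim P_1\geq 1$ and $d_2:=\dim P_2\geq 1$, its facets are exactly the sets $F_1\times P_2$ ($F_1$ a facet of $P_1$) together with the sets $P_1\times F_2$ ($F_2$ a facet of $P_2$), so that $f_{d-1}(P_1\times P_2)=f_{d_1-1}(P_1)+f_{d_2-1}(P_2)$, where $d=d_1+d_2$.

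For the equivalence I would argue directly. If $P_1,P_2$ are $2$-level, take a facet of $P_1\times P_2$, say $F_1\times P_2$, and let $\scalProd{a}{x}\le b$ be an inequality of $\R^{d_1}$ defining $F_1$; then $\scalProd{(a,0)}{(x,y)}\le b$ defines $F_1\times P_2$ in $\R^{d_1+d_2}$. A vertex $(v_1,v_2)$ of $P_1\times P_2$ lies off this facet iff $v_1$ lies off $F_1$, and since $P_1$ is $2$-level all such $v_1$ satisfy $\scalProd{a}{v_1}=b'$ for one fixed $b'$; hence all vertices of $P_1\times P_2$ off the facet lie on the translate $\scalProd{(a,0)}{(x,y)}=b'$ of its affine hull. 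Facets of the form $P_1\times F_2$ are handled symmetrically, so $P_1\times P_2$ is $2$-level. Conversely, fixing a vertex $v_2$ of $P_2$, the set $P_1\times\{v_2\}$ is a face of $P_1\times P_2$ (it is $P_1\times G_2$ with $G_2=\{v_2\}$) that is affinely isomorphic to $P_1$; so if $P_1\times P_2$ is $2$-level, then $P_1$ is $2$-level by Lemma~\ref{lem:2-level-2^d}(2), and symmetrically for $P_2$.

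For the bound of Conjecture~\ref{main-conjecture} I would first dispose of the degenerate case: if $d_1=0$ or $d_2=0$, then $P_1\times P_2$ is affinely isomorphic to one of the factors and there is nothing to prove, so assume $d_1,d_2\geq 1$. Then, using the two face counts above together with the hypothesis that $P_1$ and $P_2$ satisfy the conjecture, and finally Lemma~\ref{lem:2-level-2^d}(1) to bound $f_0(P_1)\leq 2^{d_1}$ and $f_0(P_2)\leq 2^{d_2}$, one obtains
\begin{align*}
f_0(P_1\times P_2)\,f_{d-1}(P_1\times P_2)
&= f_0(P_2)\bigl(f_0(P_1)f_{d_1-1}(P_1)\bigr)+f_0(P_1)\bigl(f_0(P_2)f_{d_2-1}(P_2)\bigr)\\
&\leq f_0(P_2)\,d_1 2^{d_1+1}+f_0(P_1)\,d_2 2^{d_2+1}\\
&\leq d_1 2^{d+1}+d_2 2^{d+1}=d\,2^{d+1},
\end{align*}
which is exactly the desired inequality.

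I do not expect a genuine obstacle here; the only delicate points are getting the facet description of a product right — which is why the cases $d_1=0$ and $d_2=0$ must be split off — and keeping straight which part of Lemma~\ref{lem:2-level-2^d} is invoked where.
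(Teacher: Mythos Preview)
Your argument for the equivalence and for the inequality is correct and essentially follows the paper's proof; your converse direction, via ``$P_1\times\{v_2\}$ is a face of the product and faces of $2$-level polytopes are $2$-level'', is in fact a bit cleaner than the paper's one-line appeal to the product description, and you handle the degenerate case $d_i=0$ explicitly where the paper does not.

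There is, however, one genuine omission. Conjecture~\ref{main-conjecture} has two parts: the inequality $f_0(P)f_{d-1}(P)\le d2^{d+1}$, \emph{and} the statement that equality holds only for cubes and cross-polytopes. Saying that $P_1\times P_2$ ``satisfies Conjecture~\ref{main-conjecture}'' therefore requires checking the equality case as well, and you do not. The paper does this in one line: if your displayed chain of inequalities is tight, then for $i=1,2$ both $f_0(P_i)f_{d_i-1}(P_i)=d_i2^{d_i+1}$ and $f_0(P_i)=2^{d_i}$ must hold; the second condition forces $P_i$ to be (affinely isomorphic to) the $d_i$-cube, since a $d_i$-dimensional $2$-level polytope is a $0/1$ polytope with at most $2^{d_i}$ vertices, attained only by the cube. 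Hence $P_1\times P_2$ is a cube, and the equality characterisation is preserved.
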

\begin{proof}
	The first part follows immediately from the fact that $P_1=\{x: A^{(1)}x\leq b^{(1)}\}$, $P_2=\{y: A^{(2)}y\leq b^{(2)}\}$, then $P_1\times P_2=\{(x,y) : A^{(1)}x\leq b^{(1)}; A^{(2)}y\leq b^{(2)}\}$, and that the vertices of $P_1\times P_2$ are exactly the points $(x,y)$ such that $x$ is a vertex of $P_1$ and $y$ a vertex of $P_2$.
	
	For the second part, let $P= P_1 \times P_2$, $d_1=d(P_1)$, $d_2=d(P_2)$. Then it is well known that $d(P)=d_1 + d_2$, $f_0(P)=f_0(P_1)f_0(P_2)$, and $f_{d-1}(P)=f_{d_1-1}(P_1)+f_{d_2-1}(P_2)$. We conclude
	\begin{eqnarray*} f_0(P)f_{d-1}(P) & = & f_0(P_1)f_{d_1-1}(P_1)f_0(P_2) + f_0(P_2)f_{d_2-1}(P_2)f_0(P_1)\\
		& \leq & d_1 2^{d_1+d_2+1} + d_2 2^{d_1+d_2+1}\\
		& = & \,d(P) 2^{d(P)+1},\end{eqnarray*}
	
	\noindent where the inequality follows by induction and from Lemma~\ref{lem:2-level-2^d}. Suppose now that $P$ satisfies the bound with equality. Then, for $i=1,2$, $P_i$ also satisfies the bound with equality and $f_0(P_i)= 2^{d(P_i)}$, which means that $P_i$ is a $d_i$-dimensional cube. Then $P$ is a $d$-dimensional cube.\end{proof}

\subsection{Hanner and Birkhoff polytopes}
We start off with two easy examples. \emph{Hanner} polytopes~\cite{Hanner56} are defined as the smallest family that contains the $[-1,1]$ segment of dimension 1, and is closed under taking polars and Cartesian products. 
That they verify the conjecture immediately follows from Lemma~\ref{lem:cartesian-products-are-good} and from the discussion on polars earlier in Section~\ref{sec:basic}. 
The \emph{Birkhoff} polytope $B_n\subset \mathbb{R}^{n^2}$ is the convex hull of all $n \times n$ permutation matrices (see e.g.~\cite{ziegler1995lectures}). 
For $n=2$, the polytope $B_2$ is affinely isomorphic to the Hanner polytope of dimension $1$. 
For $n \geq 3$,  $B_n$ is known~\cite{ziegler1995lectures} to be 2-level and to have exactly $n!$ vertices, $n^2$ facets, and dimension $(n-1)^2$; 
it can be checked from these numbers that the conjectured bound holds and is loose. We conclude the following.

\begin{lemma}
	Hanner and Birkhoff polytopes satisfy Conjecture~\ref{main-conjecture}.
\end{lemma}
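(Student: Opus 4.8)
The plan is to treat the two families separately, since both reductions are essentially bookkeeping once the right structural facts are in place.

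First I would dispose of the Hanner polytopes. By definition they form the smallest family containing the one-dimensional segment $[-1,1]$ and closed under polarity and Cartesian products. The base case is trivial: the segment has $f_0=f_{d-1}=2$ and $d=1$, so $f_0 f_{d-1}=4=d2^{d+1}$, and in fact it is itself the one-dimensional cube (and cross-polytope). For the inductive step, I would invoke the two closure properties already recorded in the excerpt. Lemma~\ref{lem:cartesian-products-are-good} says that if $P_1$ and $P_2$ satisfy Conjecture~\ref{main-conjecture} then so does $P_1\times P_2$. For polarity, recall the discussion immediately after the definition of $P^{\triangle}$: a $d$-dimensional polytope with the origin in its interior and its polar have their vertices and facets interchanged, so $f_0(P^\triangle)f_{d-1}(P^\triangle)=f_{d-1}(P)f_0(P)$ and the bound $d2^{d+1}$ (which is symmetric in the roles of vertices and facets) is preserved. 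One only needs to check that every Hanner polytope can be realized with the origin in its interior, which is immediate since the segment $[-1,1]$ is centrally symmetric about $0$ and both operations preserve central symmetry. An easy induction on the number of operations used to build the Hanner polytope then gives the claim for the whole family.

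Next I would handle the Birkhoff polytopes $B_n$. For $n=2$, $B_2$ is affinely isomorphic to the one-dimensional Hanner polytope, so it is covered by the previous paragraph (and affine isomorphism preserves all the quantities in the conjecture). For $n\ge 3$, I would simply plug in the known combinatorial data quoted in the excerpt: $B_n$ has $d=(n-1)^2$, $f_0(B_n)=n!$, and $f_{d-1}(B_n)=n^2$. So it suffices to verify the inequality $n!\cdot n^2\le (n-1)^2\,2^{(n-1)^2+1}$ for all $n\ge 3$. This is a routine estimate: one checks $n=3$ by hand ($6\cdot 9=54$ versus $4\cdot 2^5=128$), and for $n\ge 4$ one notes that the right-hand side grows doubly exponentially in $n$ while the left-hand side grows only like $n!\, n^2 = 2^{O(n\log n)}$, so a short induction (or a direct comparison of $(n+1)!\,(n+1)^2/(n!\,n^2)$ against the ratio of the two powers of $2$) closes it. Since $f_0(B_n)=n!<2^{(n-1)^2}=2^d$ for $n\ge 3$, equality in the conjectured bound is not attained, consistently with the ``moreover'' clause; one can remark that $B_n$ is neither a cube nor a cross-polytope here.

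The only genuinely non-routine ingredient is the polarity argument for Hanner polytopes, and even that is only a matter of being careful that the realization keeps the origin interior so that the cited polar correspondence applies; everything else is either an invocation of Lemma~\ref{lem:cartesian-products-are-good} or an elementary numerical inequality. I do not foresee a real obstacle: the main thing to get right is the base of the two inductions and the observation that $d2^{d+1}$ is unchanged under swapping $f_0$ and $f_{d-1}$.
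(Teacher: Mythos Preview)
Your proposal is correct and follows essentially the same approach as the paper: Hanner polytopes by induction via Lemma~\ref{lem:cartesian-products-are-good} and the polar correspondence from Section~\ref{sec:basic}, Birkhoff polytopes by plugging in the known combinatorial data for $n\ge 3$ and treating $B_2$ as the one-dimensional segment. The paper's own proof is in fact just a two-sentence pointer to these same ingredients, so your write-up is a more explicit version of it. One small remark: your sentence ``since $f_0(B_n)=n!<2^d$, equality is not attained'' is not by itself a valid implication (a small $f_0$ could in principle be compensated by a large $f_{d-1}$); the strictness already follows from your direct numerical check, so you can simply drop that clause.
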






\section{Graphical 2-Level Polytopes}\label{sec:stable-vs-clique}

We present a general result on the number of cliques and stable sets of a graph. Proofs of all theorems from the current section will be based on it.

\begin{theorem}[Stable set/clique trade-off]\label{thr:stables-cliques}
	Let $G=(V,E)$ be a graph on $n$ vertices, $\C$ its family of non-empty cliques, and $\cal S$ its family of non-empty stable sets. Then 
	$$|\C||{\cal S}|\leq n(2^n-1).$$ 
	Moreover, equality is achieved if and only if $G$ or its complement is a clique. 
\end{theorem}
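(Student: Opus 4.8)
The plan is to prove the bound $|\C|\,|\mathcal S|\le n(2^n-1)$ by induction on $n$, picking a carefully chosen vertex $v$ and comparing the cliques and stable sets of $G$ with those of $G-v$. The key observation is that every clique of $G$ either avoids $v$ (and is thus a clique of $G-v$) or contains $v$; the cliques containing $v$ are in bijection with cliques of $G[N(v)]\cup\{\emptyset\}$, i.e. with cliques-or-empty-set in the neighborhood of $v$. Symmetrically, the stable sets containing $v$ correspond to stable-sets-or-empty-set in the non-neighborhood $\overline{N}(v) = V\setminus(N(v)+v)$. Writing $d=|N(v)|$ so that $|\overline N(v)| = n-1-d$, and letting $c(H)$, $s(H)$ denote the number of non-empty cliques resp. stable sets of a graph $H$, we get
\begin{align*}
|\C| &= c(G-v) + \bigl(c(G[N(v)])+1\bigr),\\
|\mathcal S| &= s(G-v) + \bigl(s(G[\overline N(v)])+1\bigr).
\end{align*}
The naive bounds $c(G[N(v)])+1\le 2^{d}$ and $s(G[\overline N(v)])+1\le 2^{n-1-d}$ together with the inductive bound $c(G-v)\,s(G-v)\le (n-1)(2^{n-1}-1)$ are what I would try to combine; the product $|\C|\,|\mathcal S|$ expands into four terms and one must check the arithmetic closes.

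The main obstacle is that the crude estimates above are not tight enough on their own: the product of the two ``$2^d$-type'' upper bounds gives $2^{n-1}$, and multiplying out naively overshoots. The fix is to exploit that $c(G[N(v)])\le |\C| - c(G-v)$ cannot be large and $s$ small simultaneously — more precisely, one should choose $v$ to be a vertex of \emph{maximum degree} (or, dually, argue on $G$ or $\overline G$ whichever has a high-degree vertex), so that when $d$ is close to $n-1$ the non-neighborhood term is tiny, and bound the mixed cross-terms $c(G-v)\cdot(s(G[\overline N(v)])+1)$ and $(c(G[N(v)])+1)\cdot s(G-v)$ using the trivial estimates $c(G-v)\le 2^{n-1}-1$, $s(G-v)\le 2^{n-1}-1$ only where they help. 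A cleaner route: set $a = c(G[N(v)])+1$, $b=s(G[\overline N(v)])+1$, and note $a\le 2^{d}$, $b\le 2^{n-1-d}$, so $ab\le 2^{n-1}$; also $c(G-v)\le n(2^{n-1}-1)/s(G-v)$ if $s(G-v)\ne 0$. I expect that after substituting and using $s(G-v)\ge$ (number of singletons not equal to $v$) $= n-1$ and similarly $c(G-v)\ge n-1$, the terms can be bounded; the delicate point is handling the base of the induction and the degenerate cases where $G-v$ has no edges or is complete.

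For the equality characterization, I would track when every inequality used is tight. Equality in $c(G[N(v)])+1 = 2^d$ forces $N(v)$ to induce a complete graph, equality in $s(G[\overline N(v)])+1 = 2^{n-1-d}$ forces $\overline N(v)$ to induce an empty graph, and equality in the inductive hypothesis forces $G-v$ to be a clique or its complement a clique; combining these with the requirement that the cross-terms also vanish or are maximized pins down $G$ to be a complete graph or an empty graph (equivalently, $G$ or $\overline G$ is a clique). One must be slightly careful that ``clique'' here should be read as ``complete graph on all $n$ vertices'', and check the small cases $n=1,2$ directly where the statement reads $|\C|\,|\mathcal S| = 1\cdot 1 \le 1\cdot(2-1)$ and $2\cdot 1$ or $1\cdot 2 \le 2\cdot 3$.

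A remark on the structure: the appearance of $n(2^n-1)$ rather than $n2^n$ reflects that the empty set is excluded from both families, which is exactly what makes the complete graph $K_n$ extremal — it has $n$ non-empty stable sets (the singletons) and $2^n-1$ non-empty cliques. This is also why the bound feeds cleanly into Conjecture~\ref{main-conjecture} via the stable set polytope of a perfect graph, whose vertices correspond to stable sets (including $\emptyset$) and whose facets correspond, by the perfect graph theorem machinery of~\cite{Chvatal75}, to cliques; the off-by-one bookkeeping between ``non-empty'' and ``all'' is what turns $n(2^n-1)$ into the $d2^{d+1}$ shape after accounting for the ambient dimension $d=n$.
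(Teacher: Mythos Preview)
Your decomposition $|\mathcal{C}| = c(G-v) + a$ and $|\mathcal{S}| = s(G-v) + b$ with $a\le 2^d$, $b\le 2^{n-1-d}$ (hence $ab\le 2^{n-1}$) is correct, and combining this with the inductive bound $c(G-v)\,s(G-v)\le (n-1)(2^{n-1}-1)$ leaves exactly the cross-terms $c(G-v)\cdot b + a\cdot s(G-v)$ to control. This is where the argument stops being a proof: you write that you ``expect'' these terms can be bounded after choosing $v$ of maximum degree and invoking $c(G-v),s(G-v)\ge n-1$, but you never carry this out, and it is not routine. For example, if $v$ is isolated ($d=0$) then $b$ can be as large as $2^{n-1}$, and the naive estimate $c(G-v)\cdot b \le (2^{n-1}-1)\cdot 2^{n-1}$ overshoots by an exponential factor; closing the gap in that case already requires a constrained-optimization step (maximizing $p+2q$ subject to $pq\le (n-1)(2^{n-1}-1)$), which happens to land exactly on the needed bound but which you do not perform. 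For general $d$ no such argument is supplied, and choosing $v$ of maximum degree does not obviously help, since it only bounds $d$ from below while the cross-term involves both $2^d$ and $2^{n-1-d}$. As written, this is a sketch with a genuine gap at the decisive step. (A minor slip: your $n=2$ check should read $3\cdot 2=6$, not $2\cdot 1$.)

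The paper's proof is entirely different and sidesteps induction. It considers the map $f:\mathcal{C}\times\mathcal{S}\to 2^V$ given by $f(C,S)=C\cup S$ and shows directly that every non-empty $W\subseteq V$ satisfies $|f^{-1}(W)|\le 2|W|$, with equality $|f^{-1}(W)|=1$ when $|W|=1$. Summing $2|W|$ over all non-empty $W$ gives $\sum_{k\ge 1}2k\binom{n}{k}-n = n2^n-n$, exactly the claimed bound, with no slack terms to manage. The equality characterization also drops out: any $3$-element $W$ inducing one or two edges has $|f^{-1}(W)|=5<6$, so equality forces every triple to induce $0$ or $3$ edges, i.e.\ $G$ or $\overline{G}$ is complete. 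This fiber-counting argument is short, global, and avoids precisely the cross-term bookkeeping that your inductive route leaves open.
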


\begin{proof}
	Consider the function $f:\C \times {\cal S}\rightarrow 2^V$, where $f(C,S)=C\cup S$. For a set $W\subset V$, we bound the size of its pre-image $f^{-1}(W)$. If $W$ is a singleton, the only pair in its pre-image is $(W,W)$. For $|W|\geq 2$, we claim that $|f^{-1}(W)|\leq 2|W|.$
	
	There are at most $|W|$ intersecting pairs $(C,S)$ in $f^{-1}(W)$. This is because the intersection must be a single element, $C\cap S=\{v\}$, and once it is fixed every element adjacent to $v$ must be in $C$, and every other element must be in $S$.
	
	There are also at most $|W|$ disjoint pairs in $f^{-1}(W)$, as we prove now. Fix one such disjoint pair $(C,S)$, and notice that both $C$ and $S$ are non-empty proper subsets of $W$. All other disjoint pairs $(C',S')$ are of the form $C' = C \setminus A \cup B$ and $S' = S \setminus B \cup A$, where $A \subseteq C$, $B \subseteq S$, and $|A|,|B| \leq 1$. Let $X$ (resp. $Y$) denote the set formed by the vertices of $C$ (resp. $S$) that are anticomplete to $S$ (resp. complete to $C$). Clearly, either $X$ or $Y$ is empty. We settle the case $Y=\emptyset$, the other being similar. In this case $\emptyset\neq A\subseteq X$, so $X\neq \emptyset$. If $X = \{v\}$, then $A=\{v\}$ and we have $|S|+1$ choices for $B$, with $B=\emptyset$ possible only if $|C|\geq 2$, because we cannot have $C'=\emptyset$. This gives at most $1 + |S| + |C| - 1 \leq |W|$ disjoint pairs $(C',S')$ in $f^{-1}(W)$. Otherwise, $|X|\geq 2$ forces $B=\emptyset$, and the number of such pairs is at most $1 + |X| \leq 1 + |C| \leq |W|$.
	
	We conclude that $|f^{-1}(W)|\leq 2|W|$, or one less if $W$ is a singleton. Thus
	$$|\C \times {\cal S}| \leq \sum_{k=0}^n 2k {n\choose k} - n = n 2^n - n,$$
	where the (known) fact $\sum_{k=0}^n 2k {n \choose k}= n 2^{n}$ holds since
	
	$$n 2^n =\sum_{k=0}^n (k+(n-k)) {n \choose k} =  \sum_{k=0}^n k {n \choose k} + (n-k) {n \choose n-k}= 2  \sum_{k=0}^n k {n \choose k}.$$
	
	The bound is clearly tight for $G=K_n$ and $G=\overline{K_n}$. For any other graph, there is a subset $W$ of 3 vertices that induces 1 or 2 edges. In both cases, $|f^{-1}(W)|=5<2|W|$, hence the bound is loose.
\end{proof}

\begin{corollary}\label{cor:trade-off}
	Let $G$, $\C$ and $\cal S$ be as in Theorem~\ref{thr:stables-cliques}, and $\C'=\C\cup \{\emptyset\}$ and ${\cal S'}={\cal S} \cup \{\emptyset\}$ be the families of (possibly empty) cliques and stable sets of $G$, respectively. Then 
	$$|\C'||{\cal S}'|\leq (n+1)2^n,$$ 
	and equality is achieved if and only if $G$ or its complement is a clique.
\end{corollary}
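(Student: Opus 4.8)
The plan is to derive Corollary~\ref{cor:trade-off} directly from Theorem~\ref{thr:stables-cliques} by carefully bookkeeping the effect of adding the empty set to each family. Observe that $|\C'| = |\C| + 1$ and $|{\cal S}'| = |{\cal S}| + 1$, so
\[
|\C'||{\cal S}'| = (|\C|+1)(|{\cal S}|+1) = |\C||{\cal S}| + |\C| + |{\cal S}| + 1.
\]
Theorem~\ref{thr:stables-cliques} gives $|\C||{\cal S}| \leq n(2^n - 1) = n2^n - n$, so it suffices to show $|\C| + |{\cal S}| \leq n - 1 + (n+1)2^n - n2^n = 2^n + n - 1$, i.e. $|\C| + |{\cal S}| \leq 2^n + n - 1$. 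Since every clique is in particular a subset of $V$, and likewise every stable set, the crude bounds $|\C| \leq 2^n - 1$ and $|{\cal S}| \leq 2^n - 1$ are too weak on their own; instead I would use that $\C$ and $\cal S$ intersect exactly in the $n$ singletons (a single vertex is both a clique and a stable set, and no set of size $\geq 2$ is both), so $|\C| + |{\cal S}| = |\C \cup {\cal S}| + n \leq (2^n - 1) + n$. This is precisely the needed bound, and plugging back in yields $|\C'||{\cal S}'| \leq n2^n - n + 2^n + n - 1 + 1 = (n+1)2^n$.

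For the equality characterization, I would trace when both inequalities used become tight. The bound $|\C||{\cal S}| \leq n(2^n-1)$ is an equality iff $G$ or $\overline{G}$ is a clique, by Theorem~\ref{thr:stables-cliques}. It remains to check that in these two cases the second inequality $|\C \cup {\cal S}| \leq 2^n - 1$ is also tight, and that this is consistent — i.e. that equality in the corollary forces equality in the theorem. If $G = K_n$, then $\C$ is all of $2^V \setminus \{\emptyset\}$ and $\cal S$ is the $n$ singletons, so $\C \cup {\cal S} = 2^V \setminus \{\emptyset\}$ has size $2^n - 1$: tight. The case $\overline{G} = K_n$ (i.e. $G = \overline{K_n}$) is symmetric. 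Conversely, if $G$ is neither a clique nor the complement of one, the theorem's bound is strict, hence so is the corollary's, since the other inequality only adds a nonnegative slack. This gives the iff.

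The main obstacle, such as it is, is purely bookkeeping: one must be careful that the two slacks (from $|\C||{\cal S}| \leq n(2^n-1)$ and from $|\C|+|{\cal S}| \leq 2^n + n - 1$) do not interact in a way that could allow equality in the corollary without equality in both. Since $|\C'||{\cal S}'| = |\C||{\cal S}| + (|\C|+|{\cal S}|) + 1$ is a sum of the two bounded quantities plus a constant, the total slack is exactly the sum of the two individual slacks, both nonnegative; hence equality in the corollary is equivalent to equality in both, and in particular to equality in the theorem. This makes the characterization fall out immediately. I expect the whole argument to be four or five lines once the identity $|\C|+|{\cal S}| = |\C\cup{\cal S}| + n$ is observed.
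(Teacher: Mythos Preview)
Your proof is correct and follows essentially the same approach as the paper: expand $(|\C|+1)(|{\cal S}|+1)$, bound the product term by Theorem~\ref{thr:stables-cliques}, and bound the sum term via inclusion--exclusion using that $\C$ and ${\cal S}$ meet exactly in the $n$ singletons and their union has at most $2^n-1$ elements. The paper's write-up differs only cosmetically (it groups the $+1$ into $|{\cal S}'|$ and applies inclusion--exclusion to $\C$ and ${\cal S}'$ instead of $\C$ and ${\cal S}$), and the equality analysis is identical.
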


\begin{proof}
	We apply the previous inequality to obtain
	$$\begin{array}{lll} |\C'||{\cal S}'|&=&(|\C|+1)(|{\cal S}|+1)=|\C||{\cal S}|+(|\C|+|{\cal S}'|)\\
	&\leq & n(2^n-1)+(|\C\cup {\cal S'}|+|\C\cap{\cal S}'|)\\
	&\leq & n(2^n-1)+(2^n + n)=(n+1)2^n.
	\end{array}$$
	Clearly the inequality is tight whenever $G$ or its complement is a clique, and from Theorem~\ref{thr:stables-cliques}, we know that it is loose otherwise.
\end{proof}

\subsection{Stable set polytopes of perfect graphs}\label{sec:stab(G)}

For a graph $G=(V,E)$, its stable set polytope $\stab(G)$ is the convex hull of the characteristic vectors of all stable sets in $G$. It is known that $\stab(G)$ is $2$-level if and only if $G$ is a \emph{perfect graph}~\cite{Gouveia10}, or equivalently~\cite{Chvatal75} if and only if
$$\stab(G)=\{x\in\R_+^V: x(C)\leq 1 \text{ for all maximal cliques $C$ of $G$}\}.$$

\begin{lemma}\label{thr:perfect}
	Stable set polytopes of perfect graphs satisfy Conjecture~\ref{main-conjecture}.
\end{lemma}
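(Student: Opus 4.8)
The plan is to reduce the statement about $\stab(G)$ for a perfect graph $G$ to Corollary~\ref{cor:trade-off}, by identifying $f_0(\stab(G))$ with the number of stable sets of $G$ (including the empty one) and $f_{d-1}(\stab(G))$ with the number of nonempty cliques of $G$ (plus a correction term). First I would recall that the vertices of $\stab(G)$ are exactly the characteristic vectors of stable sets of $G$, so $f_0(\stab(G)) = |\mathcal{S}'|$, where $\mathcal{S}'$ is the family of all stable sets including $\emptyset$; and that $\dim \stab(G) = |V| = n$, since the $n$ unit vectors $e_v$ (singletons are stable) together with the origin are affinely independent and all lie in $\stab(G)$.

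Next I would count the facets. From Chvátal's description, $\stab(G)$ is cut out by the nonnegativity constraints $x_v \geq 0$ for $v \in V$ and the clique constraints $x(C) \leq 1$ for $C$ a maximal clique of $G$. I would argue that each maximal clique inequality is facet-defining (this is classical for perfect graphs — the clique constraints of a perfect graph are exactly its nontrivial facets), and that each $x_v \geq 0$ is facet-defining as well (the stable sets not containing $v$ still affinely span the hyperplane $x_v = 0$, since $G - v$ is perfect and $\stab(G-v)$ is full-dimensional in $\R^{V - v}$). Hence $f_{d-1}(\stab(G)) = n + m$, where $m$ is the number of maximal cliques of $G$. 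Since $m \leq |\mathcal{C}|$, the number of all nonempty cliques, and since $n = |\mathcal{C}'| - (|\mathcal{C}| - m) - \dots$ — more cleanly, I would bound $f_0(\stab(G)) f_{d-1}(\stab(G)) = |\mathcal{S}'|(n + m) \leq |\mathcal{S}'| \cdot |\mathcal{C}'|$, using that $n + m \leq |\mathcal{C}'|$ because $\mathcal{C}'$ contains $\emptyset$, all $n$ singletons, and all $m$ maximal cliques of size $\geq 2$ (singletons that happen to be maximal cliques are already counted among the $n$). Then Corollary~\ref{cor:trade-off} gives $|\mathcal{S}'||\mathcal{C}'| \leq (n+1)2^n = d \, 2^{d+1} / 2 \cdot \ldots$ — here I need to check the arithmetic: $(n+1)2^n$ versus $d 2^{d+1} = n 2^{n+1} = 2n \cdot 2^n$. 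Since $n+1 \leq 2n$ for $n \geq 1$, the bound $f_0 f_{d-1} \leq (n+1)2^n \leq n 2^{n+1} = d 2^{d+1}$ follows.

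For the equality characterization, I would note that equality in Conjecture~\ref{main-conjecture} would force equality in the weaker bound $(n+1)2^n \leq n2^{n+1}$, i.e. $n+1 = 2n$, so $n = 1$, giving the $1$-dimensional segment, which is both a cube and a cross-polytope; alternatively, if one only wants the inequality for this class, one can simply observe that for $n \geq 2$ the bound is strict, and for $n=1$ it holds with equality and $\stab(G)$ is the segment. I would also reconcile with Corollary~\ref{cor:trade-off}'s own equality case ($G$ or $\overline{G}$ a clique): even when $G = K_n$, we have $f_0 = n+1$, $f_{d-1} = n$ (the $n$ nonnegativity facets plus the single clique facet — wait, that is $n+1$, but $\stab(K_n)$ is the simplex $\conv\{0, e_1, \ldots, e_n\}$ which has $n+1$ facets), so $f_0 f_{d-1} = (n+1)^2 < n 2^{n+1}$ for $n \geq 2$; the slack between $|\mathcal{C}'|$ and $n+m$ is harmless.

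The main obstacle I expect is the facet count: being careful that (i) every maximal clique inequality is genuinely facet-defining for a perfect graph (invoking the standard polyhedral theory of perfect graphs, or the fact that $\stab(G)$ is $2$-level and applying Lemma~\ref{lem:2-level-2^d} plus a dimension argument), (ii) every nonnegativity inequality is facet-defining, and (iii) there are no other facets — which is exactly Chvátal's theorem for perfect graphs. Once the clean identity $f_{d-1}(\stab(G)) = n + m \leq |\mathcal{C}'|$ is established, everything else is a direct appeal to Corollary~\ref{cor:trade-off} together with the elementary inequality $n + 1 \leq 2n$.
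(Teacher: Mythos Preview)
Your approach is close to the paper's but contains a genuine gap: the inequality $n+m\leq |\mathcal{C}'|$ is false in general. Take $G$ to be two isolated vertices together with one edge (so $n=4$). The maximal cliques are the two isolated singletons and the edge, so $m=3$ and $f_{d-1}=n+m=7$. But $|\mathcal{C}'|=1+4+1=6$. Your parenthetical (``singletons that happen to be maximal cliques are already counted among the $n$'') is where the reasoning slips: for an isolated vertex $v$, both $x_v\geq 0$ and $x_v\leq 1$ are facets, so $v$ is counted once in $n$ \emph{and} once in $m$, while it contributes only one element to $\mathcal{C}'$. In general your inequality fails whenever $G$ has two or more isolated vertices, and in the example above $f_0 f_{d-1}=12\cdot 7=84>72=|\mathcal{S}'||\mathcal{C}'|$, so the chain of inequalities breaks.

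The paper avoids this by using the cruder (but always valid) bound $f_{d-1}\leq d+|\mathcal{C}|=|\mathcal{C}'|+d-1$, and then splitting
\[
f_0 f_{d-1}\leq |\mathcal{S}'|\,|\mathcal{C}'|+(d-1)|\mathcal{S}'|\leq (d+1)2^d+(d-1)2^d=d\,2^{d+1},
\]
where the first term is Corollary~\ref{cor:trade-off} and the second uses the trivial $|\mathcal{S}'|\leq 2^d$. Your argument can be repaired either this way, or by first stripping off isolated vertices via Lemma~\ref{lem:cartesian-products-are-good} (since $\stab(G)=\stab(G')\times[0,1]^k$ when $G$ has $k$ isolated vertices), after which every maximal clique has size $\geq 2$ and your inequality $n+m\leq |\mathcal{C}'|$ becomes valid.
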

\begin{proof} 
	For a perfect graph $G=(V,E)$ on $d$ vertices, the polytope $\stab(G)$ is $d$-dimensional. If we define $\C$, $\C'$ and $\cal S'$ as in Corollary~\ref{cor:trade-off}, then the number of vertices in $\stab(G)$ is at most $|{\cal S'}|$. There are at most $d$ non-negativity constraints, and at most $|\C|=|\C'|-1$ clique constraints, so the number of facets in $\stab(G)$ is at most $|\C'|+d-1$. Hence
	$$\begin{array}{lll}f_0(\stab(G))f_{d-1}(\stab(G)) & \leq & (|\C'|+d-1)|{\cal S'}|\\
	& = & |\C'||{\cal S'}|+(d-1)|{\cal S'}| \\
	& \leq & (d+1)2^d+(d-1)2^d=d2^{d+1},\end{array}$$
	where we used Corollary~\ref{cor:trade-off} and the trivial inequality $|{\cal S'}|\leq 2^d$. We see that the conjectured inequality is satisfied, and is tight only in the trivial cases $d=1$ or $|{\cal S'}|= 2^d$. In the latter case, $G$ has no edges and $\stab(G)$ is affinely isomorphic to the cube.
\end{proof}

\subsection{Hansen polytopes}\label{sec:Hans}

Given a $(d-1)$-dimensional polytope $P$, the \emph{twisted prism} of $P$ is the $d$-dimensional polytope defined as the convex hull of $\{(x,1): x\in P\}$ and $\{(-x,-1):x\in P\}$. For a perfect graph $G$ with $d-1$ vertices, its \emph{Hansen} polytope~\cite{Hansen77}, $\Hans(G)$, is defined as the twisted prism of $\stab(G)$. Hansen polytopes are 2-level and centrally symmetric, see e.g.~\cite{bohn2015enumeration}. 
\begin{lemma}\label{thr:Hansen}
	Hansen polytopes satisfy Conjecture~\ref{main-conjecture}.
\end{lemma}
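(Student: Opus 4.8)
The plan is to count vertices and facets of $\Hans(G)$ directly in terms of the cliques and stable sets of the perfect graph $G$ (on $d-1$ vertices, so $\Hans(G)$ has dimension $d$), and then to feed those counts into the trade-off bound of Theorem~\ref{thr:stables-cliques} or Corollary~\ref{cor:trade-off}. For the vertices: by definition the twisted prism of $\stab(G)$ has vertex set $\{(x,1):x\in\vertex(\stab(G))\}\cup\{(-x,-1):x\in\vertex(\stab(G))\}$, and since no point $(x,1)$ can coincide with a point $(-x',-1)$, we get $f_0(\Hans(G))=2 f_0(\stab(G))\le 2|{\cal S'}|$, where ${\cal S'}$ is the family of (possibly empty) stable sets of $G$. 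For the facets: since $\Hans(G)$ is centrally symmetric, its facets come in antipodal pairs, so it suffices to understand the facets on ``one side''. The natural guess, which I would verify, is that the facets of the twisted prism of $\stab(G)$ are in bijection with pairs (facet $F$ of $\stab(G)$, choice of side), giving $f_{d-1}(\Hans(G)) = 2 f_{d-1}(\stab(G)) \le 2(|\C'|-1+(d-1)) = 2(|\C'|+d-2)$, using the facet count of $\stab(G)$ from the proof of Lemma~\ref{thr:perfect} (at most $d-1$ nonnegativity facets plus at most $|\C|=|\C'|-1$ clique facets).

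With these two bounds in hand, the computation is
\begin{align*}
 f_0(\Hans(G))\, f_{d-1}(\Hans(G)) &\le 2|{\cal S'}|\cdot 2(|\C'|+d-2) \\
 &= 4\bigl(|\C'||{\cal S'}| + (d-2)|{\cal S'}|\bigr) \\
 &\le 4\bigl(d\,2^{d-1} + (d-2)2^{d-1}\bigr) \\
 &= 4\cdot (2d-2)\cdot 2^{d-1} = (d-1)2^{d+2},
\end{align*}
where I used Corollary~\ref{cor:trade-off} applied to the graph $G$ on $n=d-1$ vertices (so $|\C'||{\cal S'}|\le d\,2^{d-1}$) and the trivial bound $|{\cal S'}|\le 2^{d-1}$. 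Since $(d-1)2^{d+2} \le d\,2^{d+1}$ is equivalent to $2(d-1)\le d$, i.e. $d\le 2$, this crude estimate is not quite enough and needs to be tightened; I will come back to this below. For $d\le 2$ the statement is trivial to check by hand (a segment or a centrally symmetric hexagon/square), so the real content is $d\ge 3$.

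\textbf{The main obstacle and how to get around it.} The loss above comes from bounding both $|{\cal S'}|$ and (implicitly, via Corollary~\ref{cor:trade-off}) essentially separately; the fix is to keep the clique count and stable-set count coupled. The key observation is that the ``$2$'' in the vertex doubling and the ``$2$'' in the facet doubling cannot both be paired with a large factor $2^{d-1}$: if $|{\cal S'}|=2^{d-1}$ then $G$ has no edges, so $|\C'|=d$ (only singletons and $\emptyset$), so $f_{d-1}(\Hans(G))\le 2(d-1+d-1)=4(d-1)$ while $f_0=2^d$, giving product $(d-1)2^{d+2}$ — still the same. So a cleverer argument is needed: I would instead go back to \emph{not} doubling the facet count so wastefully. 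In fact, for a twisted prism one can often argue that each nonnegativity facet $x_v\ge0$ of $\stab(G)$ contributes only \emph{one} facet to $\Hans(G)$ (its twisted version being implied), so the facet count is closer to $2|\C| + (d-1)$ rather than $2(|\C|+d-1)$; verifying this reduction is exactly the technical heart of the proof. Alternatively, and perhaps more robustly, I would use the polarity remarks from Section~\ref{sec:basic}: $\Hans(G)$ is centrally symmetric but not necessarily with the origin giving a self-polar situation, yet there is a classical fact that the polar of the twisted prism of $P$ is the twisted prism of $P^{\triangle}$ when $P$ is suitably positioned — so if I can show $f_0$ and $f_{d-1}$ of $\Hans(G)$ are respectively $2$ times the numbers of (certain) stable sets and (certain) cliques, the bound $f_0 f_{d-1}\le 2|{\cal S'}|\cdot 2|\C'| = 4|\C'||{\cal S'}| \le 4d\,2^{d-1}=d\,2^{d+1}$ closes exactly, with equality analysis handed to Corollary~\ref{cor:trade-off} (equality iff $G$ or $\overline G$ is a clique, which one checks corresponds to $\Hans(G)$ being a cube or cross-polytope). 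Thus the plan is: (1) prove $f_0(\Hans(G)) = 2|{\cal S'}|$ exactly; (2) prove $f_{d-1}(\Hans(G)) \le 2|\C'|$ by a careful facet analysis of the twisted prism (showing the nonnegativity facets do not separately survive on both sides, or are subsumed); (3) multiply and invoke Corollary~\ref{cor:trade-off}; (4) trace through the equality case. Step (2) is where I expect all the difficulty to lie.
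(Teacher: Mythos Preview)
Your final plan (steps (1)--(4)) is exactly the paper's proof: the paper states $f_0(\Hans(G))=2|{\cal S}'|$ and $f_{d-1}(\Hans(G))=2|\C'|$, multiplies, and applies Corollary~\ref{cor:trade-off} to get $4|{\cal S}'||\C'|\le 4d\,2^{d-1}=d\,2^{d+1}$, with the equality case read off from the corollary. The only difference is that the facet count $f_{d-1}(\Hans(G))=2|\C'|$, which you flag as ``where all the difficulty lies'', is in fact a classical result going back to Hansen's original paper~\cite{Hansen77}; the paper simply cites it rather than reproving it, so your step~(2) need not be worked out from scratch. Your initial detour via the bound $f_{d-1}\le 2(|\C'|+d-2)$ is unnecessary once this is known.
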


\begin{proof}
	Let $G=(V,E)$ be a perfect graph on $d-1$ vertices, and let $\C'$ and ${\cal S}'$ be as in Corollary~\ref{cor:trade-off}. Then $\Hans(G)$ has $2|{\cal S'}|$ vertices (from the definition), and $2|\C'|$ facets (see e.g.~\cite{Hansen77}). Using again Corollary~\ref{cor:trade-off}, we get
	$$f_0(\Hans(G))f_{d-1}(\Hans(G))  = 4|{\cal S'}||\C'| \leq 4d2^{d-1} = d2^{d+1}.$$
	The inequality is tight only if $G$ is either a clique or an anti-clique. The Hansen polytopes of these graphs are affinely equivalent to the cross-polytope and cube, respectively. \end{proof}

\subsection{Min up/down polytopes}

Fix two integers $0<l<d$. For a 0/1 vector $x\in \{0,1\}^d$ and index $1\leq i\leq d-1$, we call $i$ a \emph{switch index} of $x$ if $x_i\neq x_{i+1}$. Vector $x$ satisfies the \emph{min up/down constraint} (with parameter $l$) if for any two switch indices $i<j$ of $x$, we have $j-i\geq l$. In other words, when $x$ is seen as a bit-string then it consists of blocks of 0's and 1's each of length at least $\ell$ (except possibly for the first and last blocks). The \emph{min up/down polytope} $P_d(l)$ is defined as the convex hull of all 0/1 vectors in $\mathbb{R}^d$ satisfying the min up/down constraint with parameter $l$. Those polytopes have been introduced in~\cite{lee2004min} in the context of discrete planning problems with machines that have a physical constraint on the frequency of switches between the operating and not operating states.\footnote{The more general definition given in~\cite{lee2004min} considers two parameters $\ell_1$ and $\ell_2$, which respectively restrict the minimum lengths of the blocks of $0$'s and $1$'s in valid vertices. The resulting polytope is 2-level precisely when $\ell_1=\ell_2$, thus in this section we restrict our attention to this case. General (non-2-level) min up/down polytopes do not satisfy Conjecture~\ref{main-conjecture}; see Example~\ref{ex:minupdown}.} In~\cite[Theorem 4]{lee2004min}, the following characterization of the facet-defining inequalities of $P_d(l)$ is given.

\begin{theorem}\label{lem:up/down}
	Let $I\subset [d]$ be an index subset with elements $1\leq i_1<i_2<\cdots<i_k\leq d$, such that a) $k=|I|$ is odd and b) $i_k - i_1 \leq l$. Then, the two inequalities $0\leq \sum_{j=1}^k (-1)^{j-1} x_{i_j} \leq 1$ are facet-defining for $P_d(l)$. Moreover, each facet-defining inequality in $P_d(l)$ can be obtained in this way. 
\end{theorem}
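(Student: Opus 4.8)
The statement to prove is the facet characterization of $P_d(l)$: the inequalities $0 \le \sum_{j=1}^k (-1)^{j-1} x_{i_j} \le 1$ over odd-size index sets $I = \{i_1 < \cdots < i_k\}$ with $i_k - i_1 \le l$ are exactly the facet-defining inequalities of $P_d(l)$. Since this is quoted as \cite[Theorem 4]{lee2004min}, the cleanest route is to cite it, but if a self-contained argument is wanted, the plan is as follows. The proof splits into three parts: (i) every vertex of $P_d(l)$ satisfies all these inequalities (so they are valid); (ii) each such inequality defines a face of dimension $d-1$ (so it is facet-defining); and (iii) these are the only facets, i.e. the listed inequalities together with the trivial bounds $0 \le x_i \le 1$ (which are the special case $k=1$) already suffice to describe $P_d(l)$, so no other irredundant inequality can appear.

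\textbf{Validity.} For a 0/1 vector $x$ with switch indices spaced at least $l$ apart, and a window $I = \{i_1 < \cdots < i_k\} \subseteq [d]$ of width $i_k - i_1 \le l$, I would argue that $x$ restricted to the interval $[i_1, i_k]$ contains at most one switch index, hence is of the form $0^a 1^b$ or $1^a 0^b$ on that interval (a single ``block boundary''). Evaluating the alternating sum $\sum_j (-1)^{j-1} x_{i_j}$ on such a pattern, one checks by a short case analysis on where the unique switch falls relative to the $i_j$'s that the sum always lies in $\{0,1\}$. This handles both inequalities simultaneously.

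\textbf{Dimension of the face.} To show $\sum_j (-1)^{j-1} x_{i_j} = 1$ (the case $=0$ is symmetric, or follows by the substitution $x \mapsto \bar x$ combined with a reflection of coordinates) defines a facet, I would exhibit $d$ affinely independent valid vertices lying on it. A convenient family: start from a ``base'' vertex $x^0$ that is $0$ outside $[i_1,i_k]$ and alternates appropriately inside to achieve the value $1$; then for each coordinate $e \notin \{i_1,\dots,i_k\}$ far enough from the window, flipping a whole tail block of $x^0$ starting at $e$ keeps min-up/down feasibility and preserves the equation; and for coordinates inside the window one uses that the window has width $\le l$ so one can toggle $x$ to $0$ or $1$ on an initial/final segment without creating a too-short block, again preserving the alternating sum. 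Collecting these gives a set of vertices whose difference vectors span a $(d-1)$-dimensional space inside the hyperplane. This bookkeeping — making sure each perturbation both stays a vertex of $P_d(l)$ and stays on the face — is the part I expect to be the most delicate.

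\textbf{Sufficiency (no other facets).} Here I would show that any 0/1 point $x$ violating the min-up/down constraint violates one of the listed inequalities, and conversely any point in the intersection of all the half-spaces that is 0/1 is a genuine vertex; more precisely, that the polytope $Q$ defined by all these inequalities has only 0/1 vertices and they are exactly the min-up/down vectors. If $x \in \{0,1\}^d$ has two switch indices $i < j$ with $j - i < l$, then between consecutive switches $i = t_0 < t_1 < \cdots$ inside a window of width $< l$ there are (at least) two switches, and choosing the index set $I$ to be the switch positions together with one adjacent coordinate gives an odd-size set of width $\le l$ on which the alternating sum evaluates to $2$ or $-1$, a violation. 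For the converse — that $Q$ has no fractional vertices — the strongest and shortest path is to invoke that $2$-level polytopes (here $P_d(l)$, known to be $2$-level by \cite{lee2004min}) are described by their facet inequalities with $0/1$ slack, or alternatively to note that the incidence structure of windows of width $\le l$ gives a consecutive-ones / interval structure on the constraint matrix making it (up to the alternating signs) network-matrix-like and hence the LP integral; the main obstacle in a fully elementary treatment is precisely establishing this integrality of $Q$ without appealing to the external characterization.
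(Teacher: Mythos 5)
The paper does not prove this statement at all: it is imported verbatim as Theorem~4 of Lee, Leung and Margot's paper on min up/down polytopes, so your first option --- simply citing it --- is exactly what the paper does. Your supplementary sketch is a reasonable outline of how such a proof would go (validity via the observation that a window of width $\le l$ sees at most one switch index, facetness via explicit affinely independent vertices, completeness via integrality of the relaxation), but two caveats: in part (iii) you cannot appeal to the $2$-levelness of $P_d(l)$ to get integrality, because in this paper $2$-levelness is deduced \emph{from} the present theorem, so that route is circular here; and the ``consecutive-ones / interval structure'' intuition is not quite right as stated, since the constraint supports are arbitrary odd-size subsets of windows of width $\le l$ rather than intervals, so the integrality of $Q$ genuinely requires the nontrivial work done in the cited reference rather than an off-the-shelf total-unimodularity argument.
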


It is clear from this result that $P_d(l)$ is a 2-level polytope. Indeed, if all vertices of a polytope have  $0/1$ coordinates and all facet-defining inequalities can be written as $0\leq c^\intercal x\leq 1$ for integral vectors $c$, then the polytope is $2$-level. 

\begin{lemma}\label{thr:up/down}
	2-level min up/down polytopes satisfy Conjecture~\ref{main-conjecture}.
\end{lemma}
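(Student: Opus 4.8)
The goal is to show $f_0(P_d(l)) f_{d-1}(P_d(l)) \le d\,2^{d+1}$. The facet count is governed by Theorem~\ref{lem:up/down}: facets correspond (two per index set) to subsets $I\subseteq[d]$ with $|I|$ odd and $i_{|I|}-i_1\le l$. So I would first get a clean upper bound on $f_{d-1}(P_d(l))$ by counting such index sets. Group them by their minimum element $i_1=a$; then the remaining elements lie in $\{a+1,\dots,\min(a+l,d)\}$, a set of size at most $l$, and we need an \emph{even} number of them (to make $|I|$ odd). The number of even-size subsets of an $m$-element set is $2^{m-1}$ (for $m\ge 1$), so the count per starting value $a$ is at most $2^{l-1}$, giving $f_{d-1}(P_d(l)) \le 2\cdot d\cdot 2^{l-1} = d\,2^{l}$. (One should be slightly careful for $a$ near $d$ where $m<l$, but that only helps; and one must double-check whether $|I|=1$ with $i_1=a$, i.e.\ the bound $0\le x_a\le 1$, is already included — it is, as the empty even subset.)

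**Bounding the vertices.** Next I need $f_0(P_d(l))$, the number of $0/1$ strings of length $d$ whose blocks all have length $\ge l$ except possibly the first and last. I would set up a transfer-matrix / recursion argument: let $V_d(l)$ be this count. A crude but sufficient bound should be $f_0(P_d(l)) \le 2^{d-l+1}$ or something of that order, because once we commit to where the first block ends and the last block begins, the "interior" is heavily constrained. More carefully: a valid string is determined by its sequence of switch positions $1\le s_1 < s_2 < \cdots < s_r \le d-1$ with consecutive gaps $\ge l$ (first and last gaps unrestricted) plus the value of the first bit; so $f_0 = 2\cdot\#\{\text{such switch sets}\}$. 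I'd bound the number of such switch sets and aim to show $f_0(P_d(l))\cdot d\,2^{l} \le d\,2^{d+1}$, i.e.\ $f_0(P_d(l)) \le 2^{d+1-l}$.

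**Combining, and the tight case.** With $f_{d-1}\le d\,2^l$ and $f_0 \le 2^{d+1-l}$, the product is at most $d\,2^{d+1}$, as desired. For the equality statement in Conjecture~\ref{main-conjecture}: when $l=d-1$ essentially all strings are valid (the interior constraint is vacuous since $i_k-i_1\le d-1$ always), so $P_d(d-1)$ should be the cube up to affine isomorphism and meets the bound; when $l=1$ the constraint is vacuous the other way and again we get the cube. I'd check that for $1<l<d-1$ both bounds above are strict, or at least that their product is, so equality holds only in the cube case — consistent with the conjecture's classification (note a min up/down polytope is unlikely to be a cross-polytope for $d\ge 3$, so only the cube case can arise).

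**Main obstacle.** The facet count via Theorem~\ref{lem:up/down} is routine binomial bookkeeping and I expect no trouble there. The delicate part is the vertex bound: showing $f_0(P_d(l)) \le 2^{d+1-l}$ (or whatever exponent makes the product work) cleanly. The recursion for block-constrained strings with \emph{free first and last blocks} is a little fiddly — one has to handle the all-zeros/all-ones strings and short-string edge cases $d\le l$ separately — and getting a bound that is tight enough (not just $2^d$, which is too weak) is where the real work lies. An alternative that might be cleaner: observe $P_d(l)$ is, by the symmetry of its vertex set and Theorem~\ref{lem:up/down}, either a Hanner-type polytope or decomposes as a Cartesian product in the extreme parameter ranges, and invoke Lemma~\ref{lem:cartesian-products-are-good}; but for general $l$ a direct count seems unavoidable.
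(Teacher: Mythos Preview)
Your facet count is essentially right: grouping the index sets $I$ of Theorem~\ref{lem:up/down} by their minimum gives exactly $|\mathcal{I}|=(d-l+1)2^{l-1}$, so $f_{d-1}(P_d(l))=(d-l+1)2^{l}$; your bound $d\,2^{l}$ is valid but not sharp.

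The vertex bound is where the plan fails. The target $f_0(P_d(l))\le 2^{d+1-l}$ is simply false. Take $l=d-1$ with $d\ge 3$: any two switch indices in $[d-1]$ differ by at most $d-2<l$, so a valid string has at most one switch, giving $f_0=2d$; but $2^{d+1-l}=4$. This example also shows your equality-case discussion is inverted: for $l=d-1$ the polytope $P_d(d-1)$ has $2d$ vertices and $2^d$ facets and is affinely the \emph{cross-polytope}, not the cube. (The condition ``$i_k-i_1\le d-1$ always'' that you call vacuous pertains to the facet-indexing sets, not to the vertex strings.)

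More structurally, separate bounds $f_0\le A(d,l)$, $f_{d-1}\le B(d,l)$ with $A\cdot B\le d\,2^{d+1}$ cannot both be sharp across the parameter range: at $l=1$ one has $(f_0,f_{d-1})=(2^d,2d)$ and at $l=d-1$ one has $(2d,2^d)$, with product $d\,2^{d+1}$ in \emph{both} cases, so any looseness in one factor must be exactly absorbed by the other. The paper captures this trade-off directly: it builds a graph $G$ on $d-1$ vertices for which $f_0(P_d(l))=2|\mathcal{S}'|$ (switch sets are exactly the stable sets) and $f_{d-1}(P_d(l))=2|\mathcal{C}'|$ (via a bijection between $\mathcal{I}$ and the cliques), and then applies the product bound $|\mathcal{C}'||\mathcal{S}'|\le d\,2^{d-1}$ of Corollary~\ref{cor:trade-off}. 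A direct-count argument along your lines would, to succeed, have to reprove the content of that corollary for this particular interval graph; the naive separate bounds do not.
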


\begin{proof}
	Consider the 2-level min up/down polytope $P_d(l)$, for integers $0<l<d$. $P_d(l)$ is full dimensional, hence it has dimension $d$. 
	Define the graph $G([d-1],E)$, where $\{i,j\}\in E$ whenever $|j-i|\leq l-1$, and let $\C'$ and ${\cal S}'$ be as in Corollary~\ref{cor:trade-off}. 
	We delay for a moment the proof of the following facts: a) $f_0(P_d(l))= 2|{\cal S}'|$; and b) $f_{d-1}(P_d(l)) = 2|\C'|$. 
	We obtain: $$f_0(P_d(l))f_{d-1}(P_d(l))  = 4|{\cal S'}||\C'|.$$ 
	This is the same identity that appears in the proof of Proposition~\ref{thr:Hansen}, hence in a similar fashion we conclude that the conjectured inequality is satisfied, and it is tight only if $G$ is either a clique or an anti-clique. 
	These cases correspond to $l=d-1$ and $l=1$, respectively, and it can be checked that $P_d(l)$ is then affinely equivalent to the cross-polytope or the cube.
	
	Proof of fact a). For a vector $x \in \{0,1\}^d$, let $I_x\subseteq [d-1]$ be its set of switch indices. Then $x$ is (a vertex) in $P_d(l)$ iff $I_x$ is a stable set in $G$. Moreover, if two vertices $x,y\in P_d(l)$ have exactly the same switch indices, then either $x=y$ or $x+y= {\bf 1}$ (the all-ones vector). Hence, there is a mapping from the set of vertices of $P_d(l)$ to ${\cal S}'$, where each pre-image contains 2 elements. This proves the claim.
	
	Proof of fact b). Let ${\cal{I}}\subseteq 2^{[d]}$ be the collection of all index sets $I\subseteq [d]$ satisfying the properties of Lemma~\ref{lem:up/down}. The lemma asserts that $f_{d-1}(P_d(l))=2|\cal{I}|$. To complete the proof, we present a bijection from $\cal I$ to $\C'$. For $I\subset [d]$ in $\cal I$, let $i$ be the lowest index in $I$, let $j=\min\{i+l,d\}$, and define $I'=I\setminus \{j\}$. $I'$ is a clique in $G$. We conclude the proof by showing that the mapping can be inverted, hence it is bijective. Recall that $G$ has nodes indexed from $1$ to $d-1$. For $I'\in \C'$, if $|I'|$ is odd, let $I=I'$; if $I'=\emptyset$, let $I=\{d\}$; otherwise, let $i$ be the lowest index in $I$ and $j=\min\{i+l,d\}$, and define $I=I'\cup\{j\}$. Clearly, in all cases $I \in {\cal I}$, and the preimages of two even cliques or two odd cliques are distinct. Now pick an even clique $I'$. If $I'=\emptyset$, then $I=\{d\}$ is not the preimage of an odd clique. If $I'\neq \emptyset$ and $i+l  < d$, then $I$ is not a clique of $G$, hence, in particular, it cannot be an odd clique. If $d \leq i+l$, then $d \in I$, and the latter never occurs for odd cliques.
\end{proof}

We remark that the graph $G=G_{d,l}$ defined in the proof of Proposition~\ref{thr:up/down} is perfect. 
Therefore, in this proof we exhibit for each min up/down polytope $P_d(l)$ a corresponding Hansen polytope $\Hans(G_{d,l})$ with equal dimension, number of vertices, and number of facets as $P_d(l)$. 
It is then natural to wonder if these two polytopes are combinatorially equivalent, or more generally, if min up/down polytopes are just a subclass of Hansen polytopes 
(after all, both classes are 2-level and centrally symmetric up to translation). 
This turns out not to be the case.

\begin{proposition}\label{lem:notHansen}
	The min up/down polytope with parameters $d=8$ and $l=2$ is not combinatorially equivalent to any Hansen polytope.
\end{proposition}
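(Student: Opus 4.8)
The idea is to compute a combinatorial invariant of the min up/down polytope $P_8(2)$ that cannot be matched by any Hansen polytope, and the most promising invariant is the $f$-vector together with the finer structure of small faces. First I would record the basic parameters of $P:=P_8(2)$: by Lemma~\ref{thr:up/down}, $f_0(P)=2|{\cal S}'|$ and $f_{d-1}(P)=2|\C'|$ for the graph $G_{8,2}$ on $7$ vertices with edges between indices at distance $\le 1$, i.e. the path $P_7$. Counting stable sets and cliques of $P_7$ gives concrete numbers for $f_0(P)$ and $f_7(P)$. Since Hansen polytopes $\Hans(H)$ of perfect graphs $H$ on $7$ vertices also have $f_0=2|{\cal S}'(H)|$ and $f_7=2|\C'(H)|$, if $P$ were combinatorially equivalent to some $\Hans(H)$ we would need a perfect graph $H$ on $7$ vertices with the same number of stable sets and cliques as $P_7$. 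The first step is thus to enumerate (or argue about) which perfect graphs $H$ on $7$ vertices could possibly have the right $f_0$ and $f_7$, reducing to a short list of candidates.

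\textbf{Key steps.} (1) Compute $f_0(P)$, $f_7(P)$ explicitly from the stable-set and clique counts of the path $P_7$. (2) For Hansen polytopes, use $f_0(\Hans(H))\cdot f_7(\Hans(H)) = 4|{\cal S}'(H)||\C'(H)|$ and the matching of parameters to constrain $H$: we need $|{\cal S}'(H)|$ and $|\C'(H)|$ to equal the corresponding counts for $P_7$. Since a perfect graph on $7$ vertices has its clique count determined by its clique structure, and stable set count by the complement, enumerate the (few) perfect graphs on $7$ vertices with $|\C'(H)|$ equal to the target value; for each, check whether $|{\cal S}'(H)|$ also matches. (3) For any surviving candidate $H$, distinguish $P$ from $\Hans(H)$ using a finer invariant — the natural choice is to look at the $f$-vector at an intermediate level (say $f_1$, the number of edges, or the vertex figures / the number of facets containing a fixed vertex), or to use the fact that $\Hans(H)$ is a twisted prism and hence has two disjoint facets that are each copies of $\stab(H)$ (a $6$-dimensional $2$-level polytope whose combinatorial type is forced), whereas the facets of $P_8(2)$ coming from Lemma~\ref{lem:up/down} have a different structure. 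Concretely, I would compare the list of combinatorial types of facets: a facet of $P$ is again a $2$-level polytope of dimension $7$, and I would argue that no facet of $P$ is combinatorially a $\stab(H)$ of the required type — or, more robustly, count facets of each combinatorial type and show the multiset differs.

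\textbf{Main obstacle.} The hard part is step (3): after matching $f_0$ and $f_7$, a Hansen polytope and a min up/down polytope can look very similar numerically (indeed the remark preceding the proposition emphasizes this), so I expect there to be a genuine candidate $H$ surviving step (2), and ruling it out requires a real structural distinction rather than a crude count. The cleanest route is probably to exploit the twisted-prism structure: $\Hans(H)$ has exactly two facets that are \emph{translates} (up to the twist) of the full polytope $\stab(H)$ in dimension one less, meeting every vertex-antipode pair, and these are "large" facets with $f_0 = |{\cal S}'(H)|$ vertices; I would show that $P_8(2)$ has no facet with that many vertices, by reading off vertex counts of facets directly from Theorem~\ref{lem:up/down} (a facet $0 \le \sum (-1)^{j-1}x_{i_j} \le 1$ contains exactly those vertices of $P$ attaining the bound, and this count can be computed combinatorially and compared). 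If instead the vertex counts coincide, I would fall back on comparing vertex figures (the number and types of facets through a single vertex), which for a centrally symmetric $2$-level polytope is a tractable finite check. A secondary obstacle is making the enumeration in step (2) rigorous without brute force; here I would lean on the classification of $2$-level polytopes in low dimension (available from \cite{Fiorini2016} up to $d=7$) or on the perfect graph structure to keep the candidate list short.

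\QED
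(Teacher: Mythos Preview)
Your plan is essentially the same as the paper's, and it is correct in outline. The paper carries out exactly your option of comparing $f_1$: it computes (via \texttt{polymake}) that $P_8(2)$ has $604$ edges while $\Hans(P_7)$ has $622$, which disposes of the obvious candidate. For step~(2) the paper does \emph{not} enumerate perfect graphs or invoke the $2$-level database; instead it argues directly: from $|\C'|=14$ and $n=7$ one gets $m\le 6$, so a connected candidate is a tree, and the Prodinger--Tichy bound $|{\cal S}'|\ge F_{n+2}$ (tight only for the path) forces $H=P_7$; a disconnected candidate must factor $|{\cal S}'|=34=2\cdot 17$, leading to an isolated vertex plus a unicyclic $6$-vertex graph, which is then excluded by an analogous Fibonacci-type lower bound for unicyclic graphs. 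This is cleaner than enumeration and worth knowing.

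One caution about your alternative for step~(3): the ``large facet'' test does not separate the polytopes by vertex count alone. The facet $x_1=0$ of $P_8(2)$ contains exactly half of the $68$ vertices (by the $x\mapsto \mathbf{1}-x$ symmetry), i.e.\ $34$, matching the vertex count of the $\stab(H)$ facets of $\Hans(P_7)$. So you would indeed be forced into your fallback of comparing facet \emph{types} or vertex figures, which is more work than simply reading off $f_1$. Your instinct to list $f_1$ first was the right one.
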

\begin{proof}
	It can be checked computationally that the min up/down polytope $P_8(2)$ is of dimension 8 and contains 68 vertices, 28 facets, and 604 edges (see Appendix~\ref{app:notHansen} for details on the computation). 
	The corresponding perfect graph assigned to it in the proof of Proposition~\ref{thr:up/down} is $P_7$, the path on 7 nodes, 
	and it can be checked as well that its Hansen polytope, $\Hans(P_7)$, is of dimension 8 and contains 68 vertices, 28 facets, and 622 edges (see Appendix~\ref{app:notHansen}). 
	This last number proves that the two polytopes are not combinatorially equivalent.
	
	It remains to show that there is no other perfect graph $G$, for which $\Hans(G)$ is equivalent to $P_8(2)$. 
	Assume by contradiction that there is such a graph $G$, with $n$ nodes and $m$ edges, and let $\C'$ and $\cal S'$ be as in Corollary~\ref{cor:trade-off}.
	From the information we have on $P_8(2)$, and from the proof of Proposition~\ref{thr:Hansen}, it follows that $n=7$, $|\C'|=14$ and $|{\cal S}'|=34$. 
	Notice also that the bound $|\C'|\geq m+n+1$ gives $m\leq 6$.
	Suppose first that $G$ is connected; then the bound on $m$ implies that $G$ is a tree. 
	There is extensive bibliography on the number of stable sets on trees, and it particular it is known~\cite{prodinger1982fibonacci} that $|{\cal S}'|\geq F_{n+2}$ (where $F_i$ is the $i$-th Fibonacci number), 
	and that this bound is tight only in the case of a path. As this bound is tight for $G$, we conclude that $G=P_7$, a case already considered above.
	
	Now suppose that $G$ is not connected. Then the number $|\cal S'|$ of stable sets is equal to the product of the corresponding numbers for each connected component. 
	As $|{\cal S}'|=34$ factors into $2\cdot 17$, $G$ must be composed precisely of two components: an isolated node, and a connected graph $G'$ with $|{\cal S'}_{G'}|=17$ stable sets, $n'=6$ nodes, and $m$ edges, with $5 \leq m\leq 6$. 
	Now, $G'$ cannot be a tree, as in that case $G$ would only have $|\C'|=13$ cliques. 
	Therefore, $G'$ must be a \emph{unicyclic graph}, i.e., a tree with an additional edge. 
	There are also results on the number of stable sets on uniclyclic graphs; in particular, it is known \cite[Thm. 9]{wagner2010maxima} that $|{\cal S'}_{G'}|\geq F_{n'+1}+F_{n'-1}$. 
	This leads to the inequality $17\geq 13+5$, which is a contradiction. This completes the proof.
\end{proof}

\subsection{Polytopes coming from posets}
\label{sec:order}

Consider a poset $P$, with order relation $\preceq$. 
Its associated \emph{order polytope} is 
\begin{equation}\label{poly:order}
	\mathcal{O}(P) = \{ x \in [0,1]^P : \; x_i\geq x_j \; \text{ whenever }  i\preceq j\}, 
\end{equation}
and its \emph{chain polytope} is
\begin{equation}\label{poly:chain}
	\mathcal{C}(P) = \{ x \in \R_+^P : \; \textstyle{\sum_{i\in I}} x_i\leq 1 \; \text{ for each maximal chain } I\subseteq P\}, 
\end{equation}
where we recall that a subset $I\subseteq P$ is a \emph{chain} if every pair of elements in it is comparable. 
Similarly, $I\subseteq P$ is an \emph{anti-chain} if no pair in it is comparable, and it is a \emph{closed set} if $j\in I$ and $i\preceq j$ imply $i\in I$. 
There is a well-known one-to-one correspondence between the closed sets and the anti-chains of a poset (the bijection maps a closed set to the subset formed by its maximal elements, which is an anti-chain). 
Stanley~\cite{Stanley86} gives the following characterization of vertices of these two polytopes. 

\begin{theorem}[\cite{Stanley86}]\label{upsets}
	The vertices of $\mathcal{O}(P)$ are the characteristic vectors of closed sets of $P$, and the vertices of $\mathcal{C}(P)$ are the characteristic vectors of the anti-chains of $P$. 
	In particular, $\mathcal{O}(P)$ and $\mathcal{C}(P)$ have an equal number of vertices. 
\end{theorem}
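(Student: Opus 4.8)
The plan is to prove Theorem~\ref{upsets} (Stanley's characterization) directly from the defining inequalities, establishing both the vertex descriptions and deducing the equality of vertex counts as an immediate corollary.

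\medskip

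\noindent\textbf{Part 1: Vertices of $\mathcal{O}(P)$.} First I would show that every characteristic vector $\chi_I$ of a closed set $I\subseteq P$ lies in $\mathcal{O}(P)$: the coordinates are in $\{0,1\}\subseteq[0,1]$, and if $i\preceq j$ then $j\in I$ forces $i\in I$ (closedness), so $x_i=1\geq x_j$ always holds; hence $\chi_I$ satisfies all the chain-monotonicity constraints. Conversely, I would show these are the only vertices. Since $\mathcal{O}(P)\subseteq[0,1]^P$, any vertex $v$ is determined by a set of tight facet-defining inequalities; it suffices to argue that a vertex cannot have a fractional coordinate. Suppose $v$ is a vertex and let $T=\{i : 0<v_i<1\}$ be nonempty. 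I would perturb $v$ within the affine span of the tight constraints: define, for small $\varepsilon>0$, the perturbations $v^{\pm}$ obtained by simultaneously adding/subtracting $\varepsilon$ to all coordinates in one ``level set'' of $v$ restricted to $T$ — concretely, pick a value $t\in(0,1)$ attained by $v$ on $T$ and shift exactly the coordinates equal to $t$. One checks that for $\varepsilon$ small enough both $v^{+}$ and $v^{-}$ remain in $\mathcal{O}(P)$: the box constraints stay satisfied, and a monotonicity constraint $x_i\geq x_j$ can only be violated if exactly one of $v_i, v_j$ equals $t$ and the perturbation pushes them the wrong way — but if $i\preceq j$ and $v_i=t$ while $v_j<t$, or $v_i>t$ while $v_j=t$, then the strict gap absorbs a small shift, and if $v_i=v_j=t$ both move together. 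Since $v=\tfrac12(v^++v^-)$ with $v^+\neq v^-$, $v$ is not a vertex, a contradiction. Hence every vertex is $0/1$, and a $0/1$ point is in $\mathcal{O}(P)$ iff its support is closed.

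\medskip

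\noindent\textbf{Part 2: Vertices of $\mathcal{C}(P)$.} The argument is parallel. First, $\chi_A\in\mathcal{C}(P)$ for any anti-chain $A$: the coordinates are non-negative, and any chain $I$ meets $A$ in at most one element (two comparable elements cannot both be in an anti-chain), so $\sum_{i\in I}(\chi_A)_i\leq 1$. Conversely, to see there are no other vertices, I would again rule out fractional coordinates by a perturbation argument: if $v$ is a vertex with $T=\{i:v_i>0\}$, I shift all coordinates in $T$ by $\pm\varepsilon$ scaled appropriately — more carefully, one shows the subsystem of constraints tight at a fractional vertex is rank-deficient. Alternatively, and perhaps more cleanly, I would use the known fact (due to Stanley) that the map $x\mapsto x$ composed with a piecewise-linear ``transfer map'' gives a volume-preserving, lattice-point-preserving, unimodular piecewise-linear homeomorphism between $\mathcal{O}(P)$ and $\mathcal{C}(P)$ sending vertices to vertices; but since the problem only asks for the vertex description, the self-contained perturbation argument is preferable. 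Once we know vertices are $0/1$, a $0/1$ point lies in $\mathcal{C}(P)$ iff its support is an anti-chain (if the support contained a comparable pair $i\prec j$, extend to a maximal chain through both to violate a constraint), giving the claim.

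\medskip

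\noindent\textbf{Part 3: Equal number of vertices.} This now follows from the well-known bijection between closed sets and anti-chains of $P$ recalled just before the theorem statement: a closed set maps to its set of maximal elements (an anti-chain), and an anti-chain $A$ maps to the down-closure $\{i : i\preceq a \text{ for some } a\in A\}$ (a closed set); these are mutually inverse. Combining with Parts 1 and 2, $f_0(\mathcal{O}(P)) = \#\{\text{closed sets}\} = \#\{\text{anti-chains}\} = f_0(\mathcal{C}(P))$.

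\medskip

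\noindent\textbf{Main obstacle.} The delicate point is the perturbation argument showing vertices of $\mathcal{O}(P)$ (and $\mathcal{C}(P)$) have no fractional coordinates — specifically, bookkeeping which constraints remain satisfied after shifting a whole level set, and verifying the two perturbed points are genuinely distinct and feasible. One must be careful that shifting one level set at a time (rather than all fractional coordinates at once) is what preserves the monotonicity constraints between two fractional coordinates of different values. For $\mathcal{C}(P)$ the analogous care is needed so that a chain constraint tight at $v$ stays satisfied; handling this is routine but is the step where a sloppy write-up would break.
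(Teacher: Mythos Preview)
The paper does not prove this theorem; it is quoted from Stanley~\cite{Stanley86} and used as a black box. So there is no ``paper's proof'' to compare against --- the question is just whether your argument stands on its own.

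Parts~1 and~3 are fine. Part~1 is exactly Stanley's original argument: perturbing a single level set $\{i : v_i = t\}$ for a fractional value $t$ preserves all order constraints $x_i\geq x_j$, and this shows that no non-$0/1$ point of $\mathcal{O}(P)$ is extreme. Part~3 is the standard bijection, correctly applied.

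Part~2, however, has a genuine gap, and you underestimate it when you call it ``routine''. Your proposal is to shift all coordinates in $T=\{i:v_i>0\}$ by $\pm\varepsilon$ ``scaled appropriately'', but no uniform (or proportional) shift of this kind works: on any tight maximal chain $I$ the sum $\sum_{j\in I}v_j=1$ strictly increases for one sign of $\varepsilon$, and the perturbed point leaves $\mathcal{C}(P)$. Nor does the level-set trick from Part~1 transfer. Take the two-element chain $a\prec b$ and $v=(0.3,0.7)\in\mathcal{C}(P)$: the unique chain constraint $x_a+x_b\leq 1$ is tight, each level set is a singleton, and perturbing either one alone changes the chain sum. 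The perturbation that actually works here is $d=(+1,-1)$, which is of neither form you describe. In general, producing a nonzero direction $d$ with $d_i=0$ whenever $v_i=0$ and $\sum_{j\in I}d_j=0$ for every tight maximal chain $I$ requires a real argument --- e.g.\ a greedy peeling of $v$ into antichain indicators, or the identification $\mathcal{C}(P)=\stab(G_P)$ with $G_P$ the (perfect) comparability graph, or Stanley's transfer map, which you mention but do not carry out. Any of these routes closes the gap; the bare perturbation you sketch does not.
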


From this result it is clear that the order polytope $\mathcal{O}(P)$ is a 2-level polytope because, as argued before, it is a sufficient condition that  
all vertices have $0/1$ coordinates and all facet-defining inequalities can be written as $0\leq c^\intercal x\leq 1$ for integral vectors $c$.
Let us now analyze the chain polytope $\mathcal{C}(P)$. 
Define the comparability graph of $P$ as $G_P([d],E)$, with $\{i,j\}\in E$ whenever $i\preceq j$ or $j\preceq i$. 
It is then easy to see that cliques and stable sets of this graph correspond precisely to chains and anti-chains of $P$, respectively. %
But as comparability graphs are perfect (see e.g.~\cite{chvatal1984perfectly}), 
it follows that $\mathcal{C}(P)$ is equal to the stable set polytope of $G_P$, and hence it is 2-level and satisfies Conjecture~\ref{main-conjecture} by Lemma~\ref{thr:perfect}.

The order and chain polytopes of $P$ in general do not have the same number of facets. 
There is, however, a known relation between these numbers, that immediately gives us our desired bound.

\begin{lemma}[\cite{hibi2012unimodular}]\label{hibi}
	The number of facets of $\mathcal{O}(P)$ is less than or equal to the number of facets of $\mathcal{C}(P)$.
\end{lemma}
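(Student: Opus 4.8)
The plan is to reduce the statement to a purely combinatorial inequality about the Hasse diagram of an augmented poset. Let $\widehat{P}$ denote $P$ together with an adjoined global minimum $\widehat{0}$ and global maximum $\widehat{1}$ (we may assume $P\neq\emptyset$, the empty case being trivial). First I would recall Stanley's facet descriptions. The facet-defining inequalities of $\mathcal{O}(P)$ are exactly $x_i\geq 0$ for $i$ maximal in $P$, $x_i\leq 1$ for $i$ minimal in $P$, and $x_i\geq x_j$ for each covering relation $i\lessdot j$ of $P$; equivalently, the number of facets of $\mathcal{O}(P)$ equals the number $e(\widehat{P})$ of covering relations (Hasse edges) of $\widehat{P}$. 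Likewise, the facets of $\mathcal{C}(P)$ are $x_i\geq 0$ for every $i\in P$ together with $\sum_{i\in C}x_i\leq 1$ for every maximal chain $C$ of $P$, so $\mathcal{C}(P)$ has $|P|+m(P)$ facets, where $m(P)$ is the number of maximal chains of $P$. Since maximal chains of $P$ correspond bijectively to saturated chains from $\widehat{0}$ to $\widehat{1}$ in $\widehat{P}$, it suffices to prove the following lemma and apply it to $Q=\widehat{P}$, noting $|\widehat P|=|P|+2$.

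\emph{Lemma.} Let $Q$ be a finite poset with a unique minimum $\widehat{0}$ and a unique maximum $\widehat{1}\neq\widehat{0}$. Then the number of covering relations of $Q$ is at most $(|Q|-2)$ plus the number of maximal chains of $Q$.

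To prove the lemma, for $v\in Q$ let $c(v)$ be the number of saturated chains from $\widehat{0}$ to $v$, and let $\deg^-(v)$ be the number of elements covered by $v$; note $c(\widehat{0})=1$, $c(v)=\sum_{u\lessdot v}c(u)$, and $\deg^-(v)\geq 1$ for $v\neq\widehat{0}$, so each term $\deg^-(v)-1$ is nonnegative. The key is the localized inequality, proved by induction along $Q$:
$$\sum_{\substack{v'\preceq v\\ v'\neq\widehat{0}}}\bigl(\deg^-(v')-1\bigr)\ \leq\ c(v)-1\qquad\text{for every }v\in Q.$$
For $v=\widehat{0}$ both sides are $0$. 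If $v$ covers $u_1,\dots,u_k$ (so $k=\deg^-(v)$), then $\{v'\preceq v:v'\neq\widehat{0}\}=\{v\}\cup\bigcup_{i=1}^k\{v'\preceq u_i:v'\neq\widehat{0}\}$; since all summands are nonnegative, the left-hand side is at most $(k-1)+\sum_{i=1}^k\sum_{v'\preceq u_i,\,v'\neq\widehat{0}}(\deg^-(v')-1)\leq (k-1)+\sum_{i=1}^k\bigl(c(u_i)-1\bigr)=c(v)-1$, using the inductive hypothesis and $\sum_i c(u_i)=c(v)$. Taking $v=\widehat{1}$ gives $e(Q)-(|Q|-1)=\sum_{v\neq\widehat{0}}(\deg^-(v)-1)\leq c(\widehat{1})-1=m(Q)-1$, which is the lemma.

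The main obstacle is choosing the right inductive invariant for the lemma: the naive global bound $\deg^-(\widehat{1})\leq c(\widehat{1})$ throws away the slack produced by long chains, so one must instead track the accumulated excess $\sum(\deg^-(v')-1)$ over whole down-sets; the crucial point that makes the induction go through is that overlaps among the down-sets of $u_1,\dots,u_k$ can only help, because every term $\deg^-(v')-1$ is nonnegative. The facet descriptions of $\mathcal{O}(P)$ and $\mathcal{C}(P)$ are classical (due to Stanley) and I would merely cite them rather than reprove them.
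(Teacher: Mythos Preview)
The paper does not prove this lemma at all: it is quoted verbatim from \cite{hibi2012unimodular} and used as a black box in the proof of Lemma~\ref{thr:order}. Your argument, by contrast, supplies a complete self-contained proof, and it is correct. The reduction to counting Hasse edges of $\widehat P$ versus $|P|+m(P)$ is exactly the standard translation via Stanley's facet descriptions (which indeed give irredundant systems, so the counts are exact). Your combinatorial lemma and its inductive proof are sound: the key observation that the terms $\deg^-(v')-1$ are nonnegative, so that overcounting from overlaps among the down-sets of the $u_i$ only helps, is precisely what makes the induction close; and the final unpacking $\sum_{v\neq\widehat 0}(\deg^-(v)-1)=e(Q)-(|Q|-1)$ is correct since every covering relation contributes to exactly one $\deg^-$.

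In short: you have provided more than the paper does here. If anything, one might note that the inequality $e(\widehat P)\leq |P|+m(P)$ can also be obtained by a direct injection (assign to each Hasse edge of $\widehat P$ either a vertex of $P$ or a maximal chain), but your inductive invariant is a clean alternative and arguably more transparent about where the slack comes from.
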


\begin{lemma}\label{thr:order}
	Order polytopes and chain polytopes satisfy Conjecture~\ref{main-conjecture}.
\end{lemma}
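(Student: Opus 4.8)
The plan is to handle the two polytope families with a single argument that leverages the machinery already set up. The key observation is that for a poset $P$ on $d$ elements, the chain polytope $\mathcal{C}(P)$ coincides with the stable set polytope $\stab(G_P)$ of the comparability graph $G_P$, which is perfect; hence by Lemma~\ref{thr:perfect}, $\mathcal{C}(P)$ satisfies Conjecture~\ref{main-conjecture}. This disposes of chain polytopes immediately. For order polytopes, I would compare $\mathcal{O}(P)$ against $\mathcal{C}(P)$ directly: by Theorem~\ref{upsets} they have the same number of vertices (closed sets are in bijection with anti-chains), and by Lemma~\ref{hibi} we have $f_{d-1}(\mathcal{O}(P)) \le f_{d-1}(\mathcal{C}(P))$. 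Both polytopes are full-dimensional of dimension $d$ (the order polytope lives in $[0,1]^P$ and is full-dimensional; the chain polytope likewise). Therefore
$$f_0(\mathcal{O}(P))\, f_{d-1}(\mathcal{O}(P)) \;=\; f_0(\mathcal{C}(P))\, f_{d-1}(\mathcal{O}(P)) \;\le\; f_0(\mathcal{C}(P))\, f_{d-1}(\mathcal{C}(P)) \;\le\; d\,2^{d+1},$$
where the last inequality is the bound for $\mathcal{C}(P)$ just established.

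For the equality/characterization part of the conjecture, I would trace through when each inequality above can be tight. Equality in the chain bound forces $\mathcal{C}(P) = \stab(G_P)$ to be a cube (via the characterization in Lemma~\ref{thr:perfect}), which happens exactly when $G_P$ has no edges, i.e. $P$ is an antichain; in that case $\mathcal{O}(P) = \mathcal{C}(P) = [0,1]^d$ is a cube as well, consistent with the conjectured equality case. Conversely, if $P$ is an antichain then both polytopes are the $d$-cube and attain the bound. The cross-polytope case cannot arise here since both $\mathcal{O}(P)$ and $\mathcal{C}(P)$ are $0/1$ polytopes contained in the unit cube with the origin as a vertex, and a $0/1$ polytope affinely isomorphic to the cross-polytope of dimension $\ge 2$ has $2d$ vertices but $2^d$ facets, which would violate the strict inequalities for $d \ge 3$; the small cases $d \le 2$ are checked by hand.

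I expect the only genuinely delicate point to be confirming full-dimensionality and the exact dimension count in edge cases (e.g. ensuring $\mathcal{O}(P)$ really is $d$-dimensional for every poset, including those with nontrivial relations), and carefully matching the dimension used in the chain-polytope bound with the dimension of the order polytope — both are $d = |P|$, so this is routine but must be stated. Everything else is a direct chaining of Lemma~\ref{thr:perfect}, Theorem~\ref{upsets}, and Lemma~\ref{hibi}. The main conceptual content was already extracted into those lemmas, so this final lemma is essentially a short bookkeeping argument; the hardest part is simply making sure the equality analysis covers both the cube and cross-polytope alternatives of Conjecture~\ref{main-conjecture} cleanly.
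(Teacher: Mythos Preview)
Your proposal is correct and follows essentially the same approach as the paper: identify $\mathcal{C}(P)$ with $\stab(G_P)$ to invoke Lemma~\ref{thr:perfect}, then transfer the bound to $\mathcal{O}(P)$ via Theorem~\ref{upsets} and Lemma~\ref{hibi}, with the equality case forced through Lemma~\ref{thr:perfect}'s characterization. Your separate discussion of why the cross-polytope cannot arise is unnecessary (and a bit muddled---the cross-polytope \emph{achieves} equality, it doesn't violate it): once equality forces $G_P$ to be edgeless and $\mathcal{C}(P)$ to be the cube, $\mathcal{O}(P)$ has $2^d$ vertices by Theorem~\ref{upsets}, so it cannot be a cross-polytope for $d\ge 2$ anyway.
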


\begin{proof} 
	Given a poset $P$ on $d$ elements, it is easy to see that both ${\cal O}(P)$ and ${\cal C}(P)$ are full dimensional, hence both have dimension $d$. 
	The proof for ${\cal C}(P)$ is already given in the lines above. 
	The claimed bound for ${\cal O}(P)$ now easily follows from the bounds stated in Lemmas~\ref{upsets} and~\ref{hibi}. 
	If this bound is tight for ${\cal O}(P)$, then it must also be tight for ${\cal C}(P)= \stab(G_P)$; 
	this implies by Lemma~\ref{thr:perfect} that $G_P$ has no edges, so $P$ is the trivial poset and ${\cal O}(P)$ is the cube.
\end{proof}

To conclude the section, we mention a class of polytopes defined from double posets, which was studied in~\cite{chappell2017two}. A double poset is a triple $(P,\preceq_+, \preceq_-)$, where $\preceq_+$ and $\preceq_-$ are two partial orders on $P$. The double order polytope is defined as $${\cal O}(P,\preceq_+, \preceq_-)=\conv\{(2{\cal O}(P_+) \times \{1\}) \cup (-2{\cal O}(P_-) \times \{-1\})\},$$
where $P_+$ is the poset relative to $\preceq_+$, and similarly for $P_-$. A double poset is said to be \emph{compatible} if $\preceq_+, \preceq_-$ have a common linear extension (i.e. they can be extended to the same total order). In~\cite{chappell2017two} it is proved that, if $(P,\preceq_+, \preceq_-)$ is compatible, then ${\cal O}(P,\preceq_+, \preceq_-)$ is 2-level if and only if $\preceq_+= \preceq_-$ and that in this case the number of its facets is twice the number of chains of $(P,\preceq_+)$. This leads to the following:

\begin{lemma}\label{thr:doubleorder}
	For any poset $(P,\preceq)$, the double order polytope ${\cal O}(P,\preceq, \preceq)$ satisfies Conjecture~\ref{main-conjecture}.
\end{lemma}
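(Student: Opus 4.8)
The plan is to compute all four relevant quantities — dimension, number of vertices, and number of facets — for the double order polytope $\mathcal{O}(P,\preceq,\preceq)$ in terms of the poset $(P,\preceq)$ and then invoke the clique/stable-set trade-off, exactly as was done for Hansen polytopes. Write $G_P$ for the comparability graph of $(P,\preceq)$, and let $\C'$, $\mathcal{S}'$ be the families of (possibly empty) cliques and stable sets of $G_P$ as in Corollary~\ref{cor:trade-off}; recall that cliques of $G_P$ are chains of $P$ and stable sets of $G_P$ are anti-chains of $P$.

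First I would pin down the dimension. Since $\mathcal{O}(P)$ is a full-dimensional polytope in $\R^P$ and $\mathcal{O}(P,\preceq,\preceq)$ is a twisted-prism-like construction stacking $2\mathcal{O}(P)\times\{1\}$ against $-2\mathcal{O}(P)\times\{-1\}$, it is full-dimensional in $\R^{P}\times\R = \R^{d+1}$, so $\dim = d+1$ where $d=|P|$. Next, the vertices: the vertices of $\mathcal{O}(P,\preceq,\preceq)$ are $(2v,1)$ and $(-2v,-1)$ as $v$ ranges over the vertices of $\mathcal{O}(P)$ (one should check that no such point is a convex combination of the others, which follows because the two halves lie in distinct hyperplanes and $\mathcal{O}(P)$ has the origin reachable only as a proper combination — more carefully, each $2v$ is a vertex of $2\mathcal{O}(P)$ and stays extreme after the twist since the last coordinate separates the two families). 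By Theorem~\ref{upsets} the number of vertices of $\mathcal{O}(P)$ equals the number of closed sets of $P$, which is in bijection with the anti-chains of $P$, i.e. with $|\mathcal{S}'|$. Hence $f_0(\mathcal{O}(P,\preceq,\preceq)) = 2|\mathcal{S}'|$. Finally, the facets: by the result of~\cite{chappell2017two} quoted just above the statement, in the 2-level case the number of facets is twice the number of chains of $(P,\preceq)$, and including the empty chain this is $2|\C'|$ (one must be slightly careful whether "number of chains" in~\cite{chappell2017two} counts the empty chain; if it counts only nonempty chains the count is $2(|\C'|-1)$, which only strengthens the bound, so either reading is fine).

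Putting these together, and setting $n = d = |P|$ so that $G_P$ has $n$ vertices,
\[
f_0(\mathcal{O}(P,\preceq,\preceq))\, f_{d}(\mathcal{O}(P,\preceq,\preceq)) = 4|\mathcal{S}'||\C'| \leq 4(n+1)2^{n} = (n+1)2^{n+2} = (d+1)2^{(d+1)+1},
\]
by Corollary~\ref{cor:trade-off}, which is exactly the conjectured bound for a polytope of dimension $d+1$. This is the same identity $4|\mathcal{S}'||\C'|$ that appeared for Hansen polytopes, so the equality analysis is identical: equality forces $G_P$ or its complement to be a clique, i.e. $P$ is a chain or an anti-chain; in the anti-chain case $\mathcal{O}(P)$ is a cube and $\mathcal{O}(P,\preceq,\preceq)$ is a twisted prism over a cube, hence (up to affine isomorphism) a cube one dimension higher, while in the chain case $\mathcal{O}(P)$ is a simplex and the construction yields a cross-polytope; so equality holds only in the two admitted cases.

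The main obstacle I anticipate is the bookkeeping in the vertex and facet counts — specifically, verifying that the stacked construction introduces no new vertices and loses none (so that $f_0 = 2|\mathcal{S}'|$ exactly, not just $\leq$), and reconciling the precise meaning of "number of chains" in the facet count of~\cite{chappell2017two} with the $+1$ for the empty set in Corollary~\ref{cor:trade-off}. Both are routine once the conventions are fixed, and as noted any off-by-one in the facet count goes in the favorable direction; the only place where precision genuinely matters is the equality characterization, where one needs the facet count to be exactly $2|\C'|$ (or to separately argue that a strict loss in facets combined with the trade-off still rules out equality, which it does since $|\mathcal{S}'|\le 2^n$ is strict unless $G_P$ is edgeless).

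\endproof
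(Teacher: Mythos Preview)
Your proposal is correct and follows essentially the same route as the paper: identify the dimension as $d+1$, count vertices as $2|\mathcal{S}'|$ via Theorem~\ref{upsets}, count facets as $2|\mathcal{C}'|$ via the cited result of \cite{chappell2017two}, and then apply Corollary~\ref{cor:trade-off} to the comparability graph exactly as in the Hansen-polytope argument. The paper's own proof is terser on the bookkeeping you flag (it simply asserts the vertex and facet counts and the equality characterization), so your more cautious handling of the off-by-one and of the equality cases is, if anything, an improvement rather than a deviation.
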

\begin{proof} Let $|P|=d$. From the definition, it is clear that ${\cal O}(P,\preceq, \preceq)$ has dimension $d+1$ and twice as many vertices as ${\cal O} (P)$. Let $A, C$ be the sets of anti-chains and chains of $P$, respectively. Using Lemma~\ref{upsets}, and the result in~\cite{chappell2017two}, we have that ${\cal O}(P,\preceq, \preceq)$ has $2|A|$ vertices and $2|C|$ facets. Now, we remark that Corollary~\ref{cor:trade-off} applied to the comparability graph of $P$ implies that $|A|\cdot |C|\leq (d+1)2^d$, this being tight only if $P$ itself is a chain or an anti-chain. The thesis follows immediately. 
\end{proof}


\subsection{Stable matching polytopes}

An instance of the stable matching (or stable marriage) problem, in its most classical version, is defined by a complete bipartite graph $G(M\cup W, E)$ with $n=|M|=|W|$, together with a list $(<_v)_{v\in M\cup W}$,
where for each vertex $v$, $<_v$ is a strict linear order over $v$'s neighbors. 
The traditional context of the problem is that there is a set $M$ of men and a set $W$ of women, where each individual wishes to marry a member of the opposite set, 
and has a list of strict preferences (for instance, $m<_w m'$ means that $w$ prefers $m'$ over $m$).
A \emph{stable marriage} is a perfect matching $\mu$ in $G$ with the property that there is no un-matched pair where both individuals prefer each other over their partners; 
more precisely, if $\mu(v)$ represents $v$'s partner in matching $\mu$, then $\mu$ is stable if and only if
\[
\forall mw\in E\setminus \mu, \text{ either } m<_w \mu(w) \text{ or }  w<_m \mu(m).
\]

Let $\cal{M}$ be the set of stable matchings of this instance.  
The \emph{stable matching polytope} $S({\cal M})$ is the convex hull of the characteristic vectors of all stable matchings in $\cal{M}$. 
As every instance has at least one stable matching~\cite{gale1962college}, $S({\cal M})$ is a non-empty subset of $[0,1]^E$. 
Furthermore, it is known~\cite{roth1993stable} that $S({\cal M})$ can be described as
\[
\Big\{ x\in\R^E_{\geq 0}: \ x(\delta(v))\leq 1 \ \forall v\in V, \ x_{mw}+\sum_{m'>_{w} m} x_{m'w} + \sum_{w'>_{m} w} x_{mw'} \geq 1 \ \forall mw \in E \Big\}.
\]
From this description, it is evident that $S({\cal M})$ is a 2-level polytope, because all vertices have 0/1 coordinates, and all inequalities are of the form 
$\alpha\leq c^\intercal x \leq \alpha+1$ for some integral vector $c$ and integer $\alpha$.%
\footnote{To visualize this, notice that the above-mentioned description is equivalent to $S({\cal M}) = \{x\in\R^E:$ 
	$0\leq x_{mw}\leq 1 \text{ and } 1\leq x_{mw}+\sum_{m'>_{w} m} x_{m'w} + \sum_{w'>_{m} w} x_{mw'} \leq 2 \text{ for each } mw\in E,  \text{ and }$ 
	$0\leq x(\delta(v))\leq 1 \text{ for each } v\in V \} $. } 
Our strategy is to prove that the stable matching polytope is affinely equivalent to an order polytope, and hence satisfies Conjecture~\ref{main-conjecture} by Proposition~\ref{thr:order}. 
To this end, we first present some necessary notation and results. We then discuss how our results relates to known facts from the literature.

For a pair of stable matchings $\mu, \mu'$ in $\cal{M}$, the relation $\mu \preceq \mu'$ signifies that every woman is at least as happy with $\mu'$ than with $\mu$, 
i.e., for each $w\in W$, either $\mu(w)<_w \mu'(w)$ or $\mu(w)= \mu'(w)$. 
This relation makes $\cal M$ a distributive lattice; see~\cite{knuth1976mariages}. 
We denote by $\mu_0$ and $\mu_z$ respectively the (unique) minimum and maximum in this lattice. 
Further, the ordered pair $(\mu,\mu')$ of distinct stable matchings is a \emph{covering pair} if $\mu \preceq \mu'$ and there is no other $\mu''\in{\cal M}$ such that $\mu\preceq \mu''\preceq \mu'$. 
The lattice structure of $\cal M$ can be represented by its \emph{Hasse diagram}, which is the directed graph $H({\cal M},A)$, where $A$ is the set of all covering pairs. 

The \emph{rotation} generated by a covering pair $(\mu, \mu')\in A$ is defined as $\rho=(\rho^-, \rho^+)$, where $\rho^- = \mu\setminus \mu'$ and $\rho^+ = \mu' \setminus \mu$. 
We refer to sets $\rho^-$ and $\rho^+$ respectively as the tail and the head of rotation $\rho$.%
\footnote{This is not the standard definition of rotation found in the literature, but can be seen to be equivalent by \cite[Thm. 6]{gusfield1987three}. 
	(Our notation is also different, with the traditional notation being as follows. 
	If $\rho=(\rho^-, \rho^+)$ is generated by $(\mu, \mu')$, then $\rho$ is said to be \emph{exposed} in $\mu$; 
	$\mu'$ is said to be obtained from $\mu$ after \emph{eliminating} $\rho$ from it, and denoted by $\mu/ \rho$;
	each edge in $\rho^-$ is \emph{eliminated} by $\rho$, and each edge in $\rho^+$ is \emph{produced} by $\rho$.)}
Let $\Pi$ be the set of all rotations generated by covering pairs in $A$, and notice that more than one covering pair may generate the same rotation in $\Pi$. 
For a pair of rotations $\rho, \rho'$ in $\Pi$, we say that $\rho$ \emph{precedes} $\rho'$,  
if in any $\mu_0-\mu_z$ path $P$ in the Hasse diagram $H$, any arc generating $\rho$ precedes any arc generating $\rho'$.%
\footnote{Again, this is not the standard definition of the precedence relation, but can be seen to be equivalent by \cite[Thm. 3.2.1]{gusfield1989stable}.} 
This precedence relation defines a poset structure over $\Pi$~\cite{irving1986complexity}. 
We now enumerate some properties of the rotation poset $\Pi$.

\begin{lemma}\label{rotation}
	Let $\Pi$ be the rotation poset associated to ${\cal M}$.
	\begin{enumerate}
		\item \cite[Thm. 2.5.4]{gusfield1989stable} For each $\mu \in {\cal M}$, there is a subset $\Pi(\mu)\subseteq \Pi$ such that, 
		for each $\mu_0-\mu$ path $P$ in $H$, the set of rotations generated by arcs in $P$ is precisely $\Pi(\mu)$, with each rotation in it generated exactly once.
		\item \cite[Thm. 2.5.7]{gusfield1989stable} For each $\mu\in {\cal M}$, $\Pi(\mu)$ is a closed set of the rotation poset $\Pi$,  
		and this mapping defines a bijection between ${\cal M}$ and the closed sets in $\Pi$.  
	\end{enumerate}
\end{lemma}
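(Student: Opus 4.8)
Both items are proved in~\cite{gusfield1989stable}; the plan is to recover them from the single structural fact that $(\mathcal{M},\preceq)$ is a finite distributive lattice~\cite{knuth1976mariages}, combined with two elementary properties of rotations. The first is a \emph{monotonicity} property: along any directed path in the Hasse diagram $H$ every woman's partner only improves in her order $<_w$, since an arc $(\mu,\mu')$ is by definition a pair with $\mu\preceq\mu'$; hence an edge $mw$ lying in some tail $\rho^-$ is removed for good and never reappears, so \emph{no rotation is generated more than once} along such a path. The second is a \emph{diamond} property: if arcs $(\mu,\mu_1)$ and $(\mu,\mu_2)$ leave the same node with $\mu_1\ne\mu_2$ (equivalently, generate distinct rotations $\rho_1,\rho_2$), then $\nu:=\mu_1\vee\mu_2$ is an upper cover of both $\mu_1$ and $\mu_2$ (the standard fact that in a modular lattice the join of two distinct upper covers of an element covers each of them), and a short computation with the edge sets $\rho^-,\rho^+$ shows that the arc $(\mu_1,\nu)$ generates $\rho_2$ while $(\mu_2,\nu)$ generates $\rho_1$.

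Granting these, Part~1 follows by a Newman-style confluence argument: given two directed $\mu_0$--$\mu$ paths that differ in their first arc, route both through the diamond node $\nu\preceq\mu$ and then along one fixed path from $\nu$ to $\mu$; by the diamond property the two detours contribute the same pair $\{\rho_1,\rho_2\}$, and an induction on path length reduces us to the case of two paths sharing a first arc. Thus all $\mu_0$--$\mu$ paths generate the same set of rotations, which we take as the definition of $\Pi(\mu)$; monotonicity makes each of its rotations appear exactly once, and in particular all $\mu_0$--$\mu$ paths have the same length $|\Pi(\mu)|$.

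For Part~2, closedness of $\Pi(\mu)$ is almost immediate: if $\rho'\in\Pi(\mu)$ and $\rho$ precedes $\rho'$, take any $\mu_0$--$\mu$ path $P$, which contains the unique arc generating $\rho'$, and extend it to a $\mu_0$--$\mu_z$ path $\widehat P$; in $\widehat P$ the arc generating $\rho$ comes before the one generating $\rho'$, hence already lies in the initial segment $P$, so $\rho\in\Pi(\mu)$. Injectivity of $\mu\mapsto\Pi(\mu)$ follows from the fact --- a routine consequence of Part~1 applied to paths routed through $\mu\wedge\nu$ and $\mu\vee\nu$ --- that this map is a lattice embedding of $\mathcal{M}$ into the Boolean lattice $2^{\Pi}$, with $\Pi(\mu\wedge\nu)=\Pi(\mu)\cap\Pi(\nu)$ and $\Pi(\mu\vee\nu)=\Pi(\mu)\cup\Pi(\nu)$; in particular distinct matchings have distinct rotation sets.

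The step I expect to be the main obstacle is \emph{surjectivity}: that \emph{every} closed set $S\subseteq\Pi$ equals $\Pi(\mu)$ for some $\mu$. The natural route is to list $S$ in a linear extension of the precedence order and eliminate its rotations one at a time from $\mu_0$, proving by induction that the next rotation in the list is always exposed at the current matching and that the process never forces a rotation outside $S$; this is precisely where one needs the sharp characterization that $\rho$ is exposed at $\mu$ if and only if $\rho$ is a minimal element of $\Pi\setminus\Pi(\mu)$ --- equivalently, that the precedence relation of~\cite{irving1986complexity} is exactly the comparability encoded by the rotation digraph --- and this identification carries the technical weight of~\cite[\S2.5]{gusfield1989stable}. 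Alternatively, once $(\mathcal{M},\preceq)$ is known to be distributive one can sidestep the explicit construction: identify $(\Pi,\text{precedence})$ with the poset of join-irreducibles of $\mathcal{M}$ (to a join-irreducible $\mu$ with unique lower cover $\mu_-$ assign the rotation on the arc $(\mu_-,\mu)$, and conversely note $\Pi(\mu)=\,\downarrow\!\rho$ for each $\rho\in\Pi$), and then Birkhoff's representation theorem identifies $\mu\mapsto\Pi(\mu)$ with the canonical isomorphism from $\mathcal{M}$ onto the lattice of closed sets of $\Pi$, delivering the bijection at once.
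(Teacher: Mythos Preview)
The paper does not prove this lemma at all: both parts are stated with explicit citations to \cite[Thm.~2.5.4]{gusfield1989stable} and \cite[Thm.~2.5.7]{gusfield1989stable}, and the text moves on immediately to Proposition~\ref{lem:linearindep}. So there is no ``paper's own proof'' to compare against --- the lemma is imported wholesale from Gusfield--Irving.

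Your proposal, by contrast, is an actual proof sketch, and the overall architecture is sound. The Birkhoff route you outline at the end is in fact the cleanest way to get surjectivity once distributivity of $(\mathcal{M},\preceq)$ is granted: identifying $\Pi$ with the join-irreducibles of $\mathcal{M}$ makes $\mu\mapsto\Pi(\mu)$ the canonical Birkhoff isomorphism, and all of Part~2 drops out. Two places where your sketch hides real work: (a) the diamond property --- that the arc $(\mu_1,\nu)$ generates exactly $\rho_2$ --- needs that two rotations simultaneously exposed at $\mu$ have disjoint edge sets (equivalently, involve disjoint sets of people), which is true but not a one-line computation; and (b) the identification of rotations with join-irreducibles requires checking that distinct join-irreducibles yield distinct rotations on their unique downward arcs, which is where the uniqueness statements of \cite[Lemma~3.2.1]{gusfield1989stable} (each edge lies in at most one tail and at most one head) come in. With those two ingredients made explicit your argument goes through, and it is arguably more conceptual than the step-by-step rotation-elimination proof in \cite{gusfield1989stable}.
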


The following proposition was observed in~\cite{EsFa}. For completeness, we give a proof in Appendix~\ref{app:linearindep}. 

\begin{proposition}\label{lem:linearindep}
	Vector family $\big \{\chi^{\rho^+} - \chi^{\rho^-}\big\}_{\rho \in \Pi}$ is linearly independent in $\R^E$.
\end{proposition}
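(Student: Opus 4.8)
The goal is to show that the family of vectors $v_\rho := \chi^{\rho^+} - \chi^{\rho^-}$, indexed by rotations $\rho \in \Pi$, is linearly independent in $\R^E$. The natural strategy is to exploit the poset structure of $\Pi$ together with Lemma~\ref{rotation}: each stable matching $\mu$ corresponds to a closed set $\Pi(\mu)$, and moving along a $\mu_0$--$\mu_z$ path in the Hasse diagram eliminates the rotations one at a time. Concretely, I would first record the elementary telescoping identity: if $\mu \preceq \mu'$ is a covering pair generating $\rho$, then $\chi^{\mu'} = \chi^{\mu} + v_\rho$; hence, iterating along any $\mu_0$--$\mu$ path, $\chi^{\mu} = \chi^{\mu_0} + \sum_{\rho \in \Pi(\mu)} v_\rho$. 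This already shows the $v_\rho$ generate the (translated) affine hull of the $\chi^{\mu}$'s, but I need genuine linear independence of the set of \emph{all} $|\Pi|$ vectors.

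\textbf{Key step: a triangular structure.} The heart of the argument is to find, for each rotation $\rho$, a coordinate (i.e. an edge $e \in E$) that "detects'' $\rho$ but is insensitive to rotations that do not precede $\rho$ — giving a triangular system with respect to a linear extension of the poset $\Pi$. The cleanest route uses the classical fact that the tails $\rho^-$ of distinct rotations are pairwise disjoint, and likewise the heads $\rho^+$ are pairwise disjoint (each edge of $G$ is eliminated by at most one rotation and produced by at most one rotation). Pick any edge $e \in \rho^-$. Then $v_\rho$ has a $-1$ in coordinate $e$. For any other rotation $\rho'$, the coordinate $e$ of $v_{\rho'}$ is nonzero only if $e \in \rho'^-$ (impossible, since tails are disjoint and $e \in \rho^-$) or $e \in \rho'^+$. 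So the only other rotations that can "see'' coordinate $e$ are those that \emph{produce} $e$; but if $\rho'$ produces the edge $e$ that $\rho$ eliminates, then $\rho'$ must strictly precede $\rho$ in $\Pi$ (the edge $e$ must be created before it can be destroyed along any $\mu_0$--$\mu_z$ path). Thus, ordering the rotations by any linear extension of $\Pi$, the matrix whose rows are the $v_\rho$, restricted to a suitably chosen set of "pivot'' columns (one edge $e_\rho \in \rho^-$ per rotation), is lower-triangular with nonzero diagonal entries $-1$, hence of full rank. One subtlety to handle: I must make sure the chosen pivot edges $e_\rho$ are distinct across $\rho$ — but this is immediate since the tails $\rho^-$ are pairwise disjoint, so any choice $e_\rho \in \rho^-$ automatically gives distinct columns.

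\textbf{Expected main obstacle.} The delicate point is justifying the precedence claim: if $\rho'$ produces an edge that $\rho$ eliminates, then $\rho'$ precedes $\rho$. This relies on the definition of the precedence relation via $\mu_0$--$\mu_z$ paths in the Hasse diagram (and its equivalence, cited from \cite[Thm. 3.2.1]{gusfield1989stable}, to the standard precedence order). The argument is: along any $\mu_0$--$\mu_z$ path, every rotation is eliminated exactly once (Lemma~\ref{rotation}), and for $\rho$ to be eliminated — which removes edge $e$ from the current matching — edge $e$ must be present in the matching just before, so it must have been produced earlier, necessarily by $\rho'$ (the unique rotation producing $e$); this holds on every such path, which is exactly the definition of $\rho' $ preceding $\rho$. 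A careful writeup must confirm that $e$ is not already in $\mu_0$ (equivalently, that no rotation $\rho$ eliminates an edge of $\mu_0$ unless that edge was re-created — but in fact an edge of $\mu_0$ is never in a tail of a rotation whose elimination is preceded by nothing, so one should argue that if $e \in \rho^-$ then $e \notin \mu_0$, which again follows because $e$ must be produced by some earlier rotation along the path). Once this precedence fact is in hand, the triangularity and thus linear independence follow immediately, and one concludes the proof.

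\endproof
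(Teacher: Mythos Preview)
Your approach is essentially the paper's: both hinge on the precedence claim ``if $e\in\rho_1^+\cap\rho_2^-$ then $\rho_1$ precedes $\rho_2$'' together with the disjointness of tails (and of heads) of distinct rotations; the paper packages the conclusion as a minimal-counterexample argument on a hypothetical nontrivial dependence, while you package it as triangularity of a square submatrix, and these are equivalent.

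The one genuine gap is in your justification of the precedence claim. Your path argument (``$e$ must be present just before $\rho$ is eliminated, so it must have been produced earlier, necessarily by $\rho'$'') fails when $e\in\mu_0$: then $e$ is there from the start and need not have been produced at all, so nothing you wrote rules out $\rho'$ appearing \emph{after} $\rho$ on the path. Your attempted fix --- arguing that $e\in\rho^-$ forces $e\notin\mu_0$ --- is false (any rotation exposed in $\mu_0$ has its entire tail contained in $\mu_0$), and the last sentence of that paragraph is circular. The paper sidesteps this cleanly by invoking the monotonicity of each woman's happiness along a $\mu_0$--$\mu_z$ path: if $\rho_2$ (removing $mw$) were generated before $\rho_1$ (producing $mw$), then $w$ would leave $m$ and later return to him, violating monotonicity. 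Once you substitute this monotonicity argument for your path argument, your proof goes through. Incidentally, for the triangularity step itself the case $e_\rho\in\mu_0$ is harmless --- it just means no rotation has $e_\rho$ in its head, so that column has a single nonzero entry --- so the only place the subtlety bites is in proving the precedence claim.
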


\begin{theorem}\label{marriage} 
	Given a lattice ${\cal M}$ of stable matchings, with associated rotation poset $\Pi$, 
	the stable matching polytope $S({\cal M})$ is affinely equivalent to the order polytope ${\mathcal O}(\Pi)$. 
	More precisely, if $\mu_0$ is the minimal element in ${\cal M}$, then
	\begin{align*}
		S({\cal M})= \chi^{\mu_0} + A \cdot {\mathcal O}(\Pi),
	\end{align*} where $A\in \R^{E \times \Pi}$ is the matrix with columns $A^\rho=\chi^{\rho^+}-\chi^{\rho^-}$ for each $\rho \in \Pi$.  
\end{theorem}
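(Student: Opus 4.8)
The plan is to exhibit an explicit affine bijection between the vertex set of $\mathcal{O}(\Pi)$ and that of $S(\mathcal{M})$, and then argue that this forces the affine isomorphism claimed. By Theorem~\ref{upsets}, the vertices of $\mathcal{O}(\Pi)$ are exactly the characteristic vectors $\chi^{J}$ of closed sets $J\subseteq \Pi$. By Lemma~\ref{rotation}(2), the closed sets of $\Pi$ are in bijection with $\mathcal{M}$ via $\mu \mapsto \Pi(\mu)$. So the natural candidate map is $\varphi(x) = \chi^{\mu_0} + A\cdot x$, and I would first check that $\varphi(\chi^{\Pi(\mu)}) = \chi^{\mu}$ for every $\mu\in\mathcal{M}$.

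\textbf{Main step: $\varphi$ sends vertices to vertices.} The identity to verify is $\chi^{\mu} = \chi^{\mu_0} + \sum_{\rho\in\Pi(\mu)}(\chi^{\rho^+}-\chi^{\rho^-})$. This should follow by telescoping along a $\mu_0$--$\mu$ path $P$ in the Hasse diagram $H$. Indeed, if $P$ traverses covering pairs $(\nu_0,\nu_1),(\nu_1,\nu_2),\dots,(\nu_{k-1},\nu_k)$ with $\nu_0=\mu_0$, $\nu_k=\mu$, then for each step $\chi^{\nu_{i}} - \chi^{\nu_{i-1}} = \chi^{\nu_i\setminus\nu_{i-1}} - \chi^{\nu_{i-1}\setminus\nu_i} = \chi^{\rho_i^+} - \chi^{\rho_i^-}$, where $\rho_i$ is the rotation generated by $(\nu_{i-1},\nu_i)$ (here I use that consecutive matchings along $H$ differ by the symmetric difference that \emph{is} the rotation, directly from the definition of rotation as $\rho^-=\mu\setminus\mu'$, $\rho^+=\mu'\setminus\mu$). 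Summing the telescoping differences gives $\chi^{\mu} - \chi^{\mu_0} = \sum_{i=1}^{k}(\chi^{\rho_i^+}-\chi^{\rho_i^-})$, and by Lemma~\ref{rotation}(1) the multiset $\{\rho_1,\dots,\rho_k\}$ is exactly $\Pi(\mu)$ with no repetitions. Hence the sum equals $A\cdot\chi^{\Pi(\mu)}$, as desired.

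\textbf{Concluding the affine equivalence.} Now $\varphi$ is an affine map $\R^{\Pi}\to\R^E$, and Proposition~\ref{lem:linearindep} says the columns of $A$ are linearly independent, so $\varphi$ is injective on all of $\R^{\Pi}$; in particular $x\mapsto A\cdot x$ is a linear isomorphism onto its image, an affine subspace of $\R^E$ of dimension $|\Pi|$. Since $\varphi$ maps the vertex set of $\mathcal{O}(\Pi)$ bijectively onto the vertex set of $S(\mathcal{M})$ (bijectivity of $\mu\mapsto\Pi(\mu)$ from Lemma~\ref{rotation}(2), together with the previous step and the vertex descriptions), and since an affine map carries the convex hull of a set to the convex hull of its image, we get $\varphi(\mathcal{O}(\Pi)) = \varphi(\conv\{\text{closed sets}\}) = \conv\{\chi^{\mu}:\mu\in\mathcal{M}\} = S(\mathcal{M})$. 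Restricting the injective affine map $\varphi$ to $\mathcal{O}(\Pi)$ thus gives an affine isomorphism onto $S(\mathcal{M})$, which is exactly the displayed formula $S(\mathcal{M}) = \chi^{\mu_0} + A\cdot\mathcal{O}(\Pi)$.

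\textbf{Anticipated obstacle.} The one point needing care is the telescoping identity $\chi^{\nu_i}-\chi^{\nu_{i-1}} = \chi^{\rho_i^+}-\chi^{\rho_i^-}$: it relies on the covering pairs along a Hasse path differing \emph{exactly} by a rotation's tail and head, which is immediate from the paper's definition of rotation but should be stated explicitly. The other subtlety is making sure Lemma~\ref{rotation}(1) is invoked correctly — that the set of rotations along \emph{any} $\mu_0$--$\mu$ path is $\Pi(\mu)$ with each generated once — so that the sum $\sum_i(\chi^{\rho_i^+}-\chi^{\rho_i^-})$ is genuinely $A\cdot\chi^{\Pi(\mu)}$ independent of the chosen path; this is what guarantees $\varphi$ is well defined on vertices and consistent. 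Everything else is routine once these are in place.
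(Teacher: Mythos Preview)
Your proposal is correct and follows essentially the same approach as the paper's proof: both establish the telescoping identity $\chi^{\mu}=\chi^{\mu_0}+\sum_{\rho\in\Pi(\mu)}(\chi^{\rho^+}-\chi^{\rho^-})$ along a $\mu_0$--$\mu$ path in $H$ via Lemma~\ref{rotation}(1), use Lemma~\ref{rotation}(2) and Theorem~\ref{upsets} to match vertex sets, and invoke Proposition~\ref{lem:linearindep} to ensure the affine map is injective so that the image polytope is affinely equivalent to $\mathcal{O}(\Pi)$. Your write-up is arguably a touch more explicit about why injectivity guarantees that images of vertices remain vertices, but the logical skeleton is identical.
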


\begin{proof}
	Let $Q$ be the polytope on the right-hand side of the claimed identity. 
	$Q$ is clearly an affine projection of ${\mathcal O}(\Pi)$ into $\R^E$. 
	Further, the affine dimension of $Q$ is equal to that of ${\mathcal O}(\Pi)$, by Lemma~\ref{lem:linearindep}. 
	Hence, $Q$ is affinely equivalent to ${\mathcal O}(\Pi)$. 
	
	It remains to show that $S({\cal M})=Q$, which we do by proving that the collection of vertices of these polytopes coincide. 
	Recall from Lemma~\ref{upsets} that the vertices of ${\mathcal O}(\Pi)$ are precisely the characteristic vectors of the closed sets in $\Pi$, 
	and that these closed sets are in one-to-one correspondence to the stable matchings in $\cal M$, by Lemma~\ref{rotation}~(2). We thus obtain that the vertices of $Q$ are 
	$ \big\{ \chi^{\mu_0} + \sum_{\rho \in \Pi(\mu)} (\chi^{\rho^+} - \chi^{\rho^-}) \big\}_{\mu\in{\cal M}}$.
	
	Finally, we prove that $\chi^\mu=\chi^{\mu_0} + \sum_{\rho \in \Pi(\mu)} (\chi^{\rho^+} - \chi^{\rho^-})$ for each stable matching $\mu$. 
	Fix $\mu\in{\cal M}$, and fix a $\mu_0-\mu$ path $P$ in $H$: this defines a chain of stable matchings $\mu_0\preceq \mu_1 \preceq \cdots \preceq \mu_k=\mu$, 
	and a sequence of rotations $\rho_1, \cdots, \rho_k$, so that $\rho_i = (\rho_i^-, \rho_i^+)=(\mu_{i-1} \setminus \mu_i, \mu_i \setminus \mu_{i-1})$ for each $1\leq i\leq k$. 
	Therefore, $\chi^{\mu_i} = \chi^{\mu_{i-1}} +(\chi^{\rho_i^+} - \chi^{\rho_i^-})$, which by recursion gives us $\chi^{\mu} = \chi^{\mu_0} + \sum_{i=1}^k (\chi^{\rho_i^+} - \chi^{\rho_i^-})$. 
	By Lemma~\ref{rotation}~(1), sets $\Pi(\mu)$ and $\{\rho_1, \cdots, \rho_k\}$ are equal with no repeated elements. This completes the proof.
\end{proof}

As remarked before, this result immediately implies our desired bound, by Proposition~\ref{thr:order}. 

\begin{corollary}
	Stable matching polytopes satisfy Conjecture~\ref{main-conjecture}.
\end{corollary}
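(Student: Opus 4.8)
The plan is to derive this as an immediate consequence of the structural identification carried out in Theorem~\ref{marriage}, together with Proposition~\ref{thr:order}. The observation driving everything is that Conjecture~\ref{main-conjecture} is a statement purely about the three quantities $f_0(P)$, $f_{d-1}(P)$ and $\dim(P)$ (its ``moreover'' clause characterizing the equality case in terms of affine isomorphism type), and all of these are invariant under affine isomorphism: an invertible affine map carries vertices to vertices and facets to facets bijectively, and preserves affine dimension. Consequently, if $P$ and $P'$ are affinely equivalent, then $P$ satisfies Conjecture~\ref{main-conjecture} if and only if $P'$ does.

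First I would invoke Theorem~\ref{marriage}: for an instance with lattice of stable matchings ${\cal M}$ and associated rotation poset $\Pi$, we have $S({\cal M}) = \chi^{\mu_0} + A\cdot {\mathcal O}(\Pi)$, where the columns of $A$ are the vectors $\chi^{\rho^+}-\chi^{\rho^-}$. The map $x\mapsto \chi^{\mu_0} + Ax$ is affine, and by Proposition~\ref{lem:linearindep} the linear part $A$ has full column rank, hence is injective on $\aff({\mathcal O}(\Pi))$; so $S({\cal M})$ is affinely equivalent to the order polytope ${\mathcal O}(\Pi)$. Then I would apply Proposition~\ref{thr:order}, which asserts that every order polytope satisfies Conjecture~\ref{main-conjecture}, and combine it with the invariance noted above to conclude that $S({\cal M})$ does too. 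For the equality clause, note that equality for $S({\cal M})$ forces equality for ${\mathcal O}(\Pi)$, which by the proof of Proposition~\ref{thr:order} occurs only when $\Pi$ is (the trivial poset or) an antichain; in that case ${\mathcal O}(\Pi)$, and hence $S({\cal M})$, is a cube — consistent with the statement of Conjecture~\ref{main-conjecture}.

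There is essentially no obstacle at this final step: all of the content has already been absorbed into the earlier results. The genuinely substantive parts are Theorem~\ref{marriage} itself (which rests on the vertex description of order polytopes in Theorem~\ref{upsets}, the bijection between stable matchings and closed sets of $\Pi$ in Lemma~\ref{rotation}~(2), the recursion $\chi^{\mu_i}=\chi^{\mu_{i-1}}+(\chi^{\rho_i^+}-\chi^{\rho_i^-})$ along a Hasse path, and the linear independence of Proposition~\ref{lem:linearindep}) and Proposition~\ref{thr:order} (which in turn rests on the stable set/clique trade-off of Theorem~\ref{thr:stables-cliques} via the comparability graph and on the facet comparison of Lemma~\ref{hibi}). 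Given these, the corollary is a two-line deduction, and I would present it as such.
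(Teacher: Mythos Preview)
Your proposal is correct and matches the paper's approach exactly: the paper simply remarks that Theorem~\ref{marriage} (affine equivalence of $S({\cal M})$ with ${\mathcal O}(\Pi)$) together with Proposition~\ref{thr:order} immediately gives the corollary. Your additional remarks on the equality case are correct but not strictly needed, since Proposition~\ref{thr:order} already handles that clause and affine equivalence preserves it.
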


We conclude the section with a remark on Theorem~\ref{marriage}. 
Even though its proof is relatively brief, to the best of the authors' knowledge this explicit connection was absent in the (extensive) literature of the problem,  
and it seems to simplify known results as well as shed new light on the structure of the stable matching polytope $S({\cal M})$. 
In particular, Eirinakis et al.~\cite{eirinakis2013polyhedral} have recently obtained for the first time the dimension, the number of facets, and a complete minimal linear description of $S({\cal M})$. 
Their analysis, based on the study of the rotation poset $\Pi$, as well as on ``reduced non-removable sets of non-stable pairs", is far from trivial. 
In contrast, our observation is theoretically simpler and immediately provides those results,  
as the facial structure of order polytopes is very well understood, and a simple minimal linear description of it is known; see~\cite{Stanley86}. 
Moreover, our result is also algorithmically significant, as it provides, from the rotation poset $\Pi$, a non-redundant system of equations and inequalities of $S({\cal M})$; 
and $\Pi$ can be efficiently constructed from the preference lists, in time $O(n^2)$ \cite[Lemma 3.3.2]{gusfield1989stable}.

\section{$2$-Level Matroid Base Polytopes}\label{sec:matroids}
For basic definitions and facts about matroids not appearing in the current section we refer to~\cite{oxley2006matroid}.

\subsection{2-level matroid polytopes and Conjecture~\ref{main-conjecture}}
In this section we give the relevant background on matroids and we prove that Conjecture~\ref{main-conjecture} holds for 2-level base polytope of matroids.

We identify a matroid $M$ by the tuple $(E,\B)$, where $E=E(M)$ is its ground set, and $\B=\B(M)$ is its base set. Whenever it is convenient, we describe a matroid in terms of its independent sets or its rank function. Given $M=(E,\B)$ and a set $F\subseteq E$, the \textit{restriction} $M|F$ is the matroid with ground set $F$ and independent sets $\I(M|F)=\{I\in\I(M) : I\subseteq F\}$; and the \textit{contraction} $M/F$ is the matroid with ground set $M\setminus F$ and rank function $r_{M/F}(A)=r_M(A\cup F)-r_M(F)$. For an element $e\in E$, the \emph{removal of $e$} is $M-e=M|(E-e)$. 
An element $p\in E$ is called a \emph{loop} (respectively \emph{coloop}) of $M$ if it appears in none (all) of the bases of $M$. 

Consider matroids $M_1=(E_1,\B_1)$ and $M_2=(E_2,\B_2)$, with non-empty base sets. If $E_1 \cap E_2 =\emptyset$, we can define the \emph{direct sum} $M_1 \oplus M_2$ as the matroid with ground set $E_1 \cup E_2$ and base set $\B_1 \times \B_2$. If, instead, $E_1 \cap E_2 = \{p\}$, where $p$ is neither a loop nor a coloop in $M_1$ or $M_2$, we let the \emph{2-sum} $M_1 \oplus_2 M_2$ be the matroid with ground set $E_1 \cup E_2 - p$, and base set $\{B_1\cup B_2 - p \ : \ B_i\in \B_i \text{ for } i=1,2 \text{ and } p\in B_1 \triangle B_2\}$.  A matroid is \emph{connected} (2-connected for some authors) if it cannot be written as the direct sum of two matroids, each with fewer elements; and a connected matroid $M$ is \textit{3-connected} if it cannot be written as a 2-sum of two matroids, both with strictly fewer elements than $M$.

%

%
%

The proofs of the following facts can be found e.g. in~\cite{oxley2006matroid}.

\begin{proposition}\label{obs:2-sum-facts}
	Let $M=M_1\s M_2$, with $E(M_1)\cap E(M_2)=\{p\}$.
	\begin{enumerate}
		\item\label{obs:2sum-part-1} $M_1 \oplus_2 M_2$ is connected if and only if so are $M_1$ and $M_2$.
		\item\label{obs:2sum-part-2} $\B(M_1\s M_2)=\B(M_1-p)\times \B(M_2/p) \uplus \B(M_1/p)\times \B(M_2-p)$. 
		\item\label{obs:2sum-part-2a} $|\B(M_i)|=|\B(M_i-p)|+|\B(M_i/p)|$, for $i=1,2$.
		\item\label{obs:2sum-part-3} If $M_2=M_2'\oplus M_2''$, where $E(M_1)\cap E(M_2')=\{p\}$ and $(E(M_1)\cup E(M_2'))\cap E(M_2'')=\emptyset$, then $M_1 \s M_2=(M_1\s M_2')\oplus M_2''.$
	\end{enumerate}
\end{proposition}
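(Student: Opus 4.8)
\textbf{Proof plan for Proposition~\ref{obs:2-sum-facts}.}

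The statement collects four standard facts about direct sums and 2-sums of matroids, so the plan is to verify each one in turn directly from the definitions given in the text, rather than invoking outside machinery. Throughout, write $M=M_1\s M_2$ with $E(M_1)\cap E(M_2)=\{p\}$ and $p$ neither a loop nor a coloop in either $M_i$.

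For part~\ref{obs:2sum-part-1}, the plan is to argue both directions contrapositively. If $M_i$ is disconnected, say $M_1=N'\oplus N''$ with $p\in E(N')$ (we may assume $p$ lands in one summand), then using part~\ref{obs:2sum-part-3} one rewrites $M=(N'\s M_2)\oplus N''$, exhibiting $M$ as a direct sum of strictly smaller matroids; one only needs to check $p$ is still neither loop nor coloop in $N'$, which follows since loops/coloops are preserved under direct summands. Conversely, if $M$ is disconnected, one shows the element $p$ ``glues'' the two sides: any partition of $E(M)=E_1\cup E_2-p$ into a direct-sum decomposition would have to keep all of $E(M_1)-p$ on one side and all of $E(M_2)-p$ on the other (since $M_1-p$ and $M_2-p$ are connected once $M_1,M_2$ are — here one may need $M_i$ connected to force $M_i-p$ or $M_i/p$ to behave), contradicting that each $M_i$ has more than two elements. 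The cleanest route is to cite the well-known characterization that $M$ is connected iff every pair of elements lies in a common circuit, and trace circuits through the 2-sum; I expect this to be the fiddliest of the four parts.

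Part~\ref{obs:2sum-part-2} is essentially a restatement of the definition of the 2-sum base set: $\B(M)=\{B_1\cup B_2-p : B_i\in\B_i,\ p\in B_1\triangle B_2\}$. The plan is to split on which of $B_1,B_2$ contains $p$. If $p\in B_1\setminus B_2$, then $B_1-p$ is a basis of $M_1/p$ and $B_2$ is a basis of $M_2-p$ (here use that $p$ is not a loop, so $M_1/p$ has the expected rank, and $p\notin B_2$ with $B_2\in\B(M_2)$ means $B_2\in\B(M_2-p)$ since $p$ is not a coloop); symmetrically for $p\in B_2\setminus B_1$. This gives $\B(M)=\B(M_1/p)\times\B(M_2-p)\ \uplus\ \B(M_1-p)\times\B(M_2/p)$, and the union is disjoint because a basis in the first family contains $E(M_1)\cap$(basis)$\,\ni$ exactly the contracted structure — more simply, a set in the first family restricted to $E_1-p$ has rank $r(M_1/p)=r(M_1)-1$ while in the second it has rank $r(M_1-p)=r(M_1)$, distinguishing them. (The product order of the two terms is cosmetic.)

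Part~\ref{obs:2sum-part-2a}, $|\B(M_i)|=|\B(M_i-p)|+|\B(M_i/p)|$, follows by partitioning $\B(M_i)$ according to whether a basis contains $p$: bases avoiding $p$ are exactly $\B(M_i-p)$ (using $p$ not a coloop, this set is nonempty and correctly described), and bases containing $p$ are exactly $\{B\cup\{p\}: B\in\B(M_i/p)\}$ (using $p$ not a loop). This is an immediate counting identity once the two bijections are noted. Finally, part~\ref{obs:2sum-part-3} is proved by a direct computation with base sets: apply part~\ref{obs:2sum-part-2} to $M_1\s M_2$ with $M_2=M_2'\oplus M_2''$, use $\B(M_2-p)=\B(M_2')\times\B(M_2'')$ restricted appropriately — more precisely $\B((M_2'\oplus M_2'')-p)=\B(M_2'-p)\times\B(M_2'')$ and $\B((M_2'\oplus M_2'')/p)=\B(M_2'/p)\times\B(M_2'')$ since $p\in E(M_2')$ only — substitute, and factor out the common $\B(M_2'')$ to recognize the result as $\B((M_1\s M_2')\oplus M_2'')$. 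The main obstacle overall is part~\ref{obs:2sum-part-1}; parts \ref{obs:2sum-part-2}--\ref{obs:2sum-part-3} are routine bookkeeping with base-set definitions, and since the proposition explicitly says the proofs ``can be found e.g. in~\cite{oxley2006matroid}'', it is legitimate to keep the write-up terse and defer the more delicate connectivity argument to that reference.
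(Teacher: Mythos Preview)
The paper gives no proof of this proposition at all: immediately before the statement it simply says ``The proofs of the following facts can be found e.g.\ in~\cite{oxley2006matroid}.'' So there is nothing to compare your plan against; you are supplying substantially more detail than the paper does.

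That said, your plan is sound. Parts~\ref{obs:2sum-part-2}--\ref{obs:2sum-part-3} are exactly the right bookkeeping arguments from the base-set definitions, and your observation that part~\ref{obs:2sum-part-3} can be proved independently and then fed back into part~\ref{obs:2sum-part-1} is a clean way to organize things. For part~\ref{obs:2sum-part-1}, you correctly flag the converse (both $M_i$ connected $\Rightarrow$ $M$ connected) as the only nontrivial step; your parenthetical ``$M_i-p$ connected once $M_i$ is'' is not true in general, but you already back away from it and point to the circuit characterization, which is indeed the standard route (this is the content of the relevant argument in Oxley). Given that the paper itself is content to cite Oxley, your final remark that it is legitimate to defer the connectivity details there is entirely in keeping with the paper's own treatment.
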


The \emph{base polytope} $B(M)\subseteq \R^E$ of a matroid $M=(E,\B)$ is given by the convex hull of the characteristic vectors of its bases. For a matroid $M$, the following is known to be a description of $B(M)$.
\begin{equation}\label{eq:base}
	\begin{array}{llllrllll} B(M) =  \{ x \in [0,1]^E : &  x(F) & \leq & r(F) & \hbox{for $F\subseteq E$;} &  x(E) & = & r(E)& \}.
	\end{array}
\end{equation}

A matroid $M(E,\B)$ is \emph{uniform} if $\B=\binom{E}{k}$, where $k$ is the rank of $M$. We denote the uniform matroid with $n$ elements and rank $k$ by $U_{n,k}$.
It is easy to check that the base polytope of a uniform matroid is a hypersimplex, i.e. $B(U_{n,k})=\{x\in \R^n: 0\leq x\leq 1, \sum_1^n {x_i}=k\}$.
Notice that, if $M_1$ and $M_2$ are uniform matroids with $|E(M_1)\cap E(M_2)|=1$, then $M_1\oplus_2 M_2$ is unique up to isomorphism, for any possible common element. 

\begin{figure}[h]
	\begin{center}
		\includegraphics[scale=0.75]{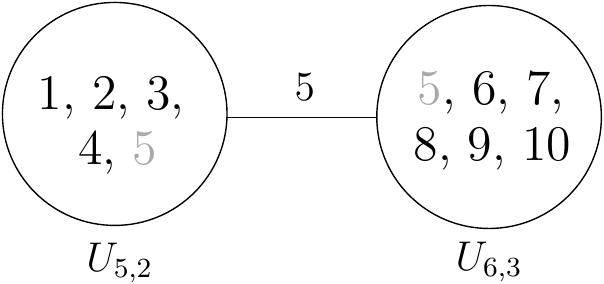}
	\end{center}
	\caption{A representation of $M=U_{5,2}\oplus_2 U_{6,3}$. $M$ has ground set $\{1,2,3,4,6,7,8,9,10\}$ and rank $4$, and two of its bases are $\{1,2,6,7\}$ and $\{1,6,7,8\}$. $B(M)$ is 2-level (see Theorem~\ref{thr:matroid-2-level}).} 
	\label{2sumuniform}
\end{figure}

Let $\M$ be the class  of matroids whose base polytope is 2-level. $\M$ has been characterized in~\cite{Grande14}:

\begin{theorem}\label{thr:matroid-2-level}
	The base polytope of a matroid $M$ is $2$-level if and only if $M$ can be obtained from uniform matroids through a sequence of direct sums and 2-sums.
\end{theorem}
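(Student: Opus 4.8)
The statement to prove is Theorem~\ref{thr:matroid-2-level}, the characterization of $2$-level matroid base polytopes; but note that the excerpt states this as a known result from~\cite{Grande14}, and the paper later builds its compact description (Theorem~\ref{thr:matroid}) and verifies Conjecture~\ref{main-conjecture} (Lemma~\ref{thr:conj-matroid}) on top of it. Since it is stated here, I will sketch a self-contained argument. The plan is to prove the two directions separately. For the ``if'' direction, I would show that $2$-levelness of matroid base polytopes is preserved under direct sums and $2$-sums, and that base polytopes of uniform matroids (hypersimplices) are $2$-level; the claim then follows by induction on the number of operations. For the ``only if'' direction, I would argue that if $B(M)$ is $2$-level then $M$ must decompose, using Seymour-type decomposition and forbidden-minor characterization of uniform matroids.

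\textbf{The ``if'' direction.} Hypersimplices $B(U_{n,k})=\{x\in[0,1]^n:\sum_i x_i=k\}$ are easily seen to be $2$-level: every facet is of the form $\{x_e=0\}$ or $\{x_e=1\}$, and the vertices not on $\{x_e=0\}$ all satisfy $x_e=1$, and symmetrically. For direct sums, $B(M_1\oplus M_2)=B(M_1)\times B(M_2)$, so $2$-levelness is preserved by the Cartesian-product part of Lemma~\ref{lem:cartesian-products-are-good}. The crux is the $2$-sum. Using Proposition~\ref{obs:2-sum-facts}(\ref{obs:2sum-part-2}), $\B(M_1\s M_2)=\B(M_1-p)\times \B(M_2/p)\uplus \B(M_1/p)\times \B(M_2-p)$; the idea is to realize $B(M_1\s M_2)$ as a subpolytope (in fact a specific face or a gluing) built from the four smaller base polytopes $B(M_i-p), B(M_i/p)$, each of which is $2$-level by Lemma~\ref{lem:2-level-2^d}(2) since $M_i-p$ and $M_i/p$ are minors of $M_i$ (appeal to induction on $|E(M)|$ after observing the components are $2$-level). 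Concretely, one parametrizes $x\in B(M_1\s M_2)$ by the value $t=x(E_1-p)\in[r(M_1/p),r(M_1-p)]$ (an interval of length $1$, since $r(M_1-p)-r(M_1/p)\in\{0,1\}$ and $p$ is not a loop/coloop forces it to be $1$), and checks that for each fixed endpoint $t$ the slice is a product of two $2$-level polytopes; the facet-defining inequalities of $B(M_1\s M_2)$ coming from~\eqref{eq:base} are rank inequalities, and one verifies directly that each takes exactly two values on vertices. I expect this verification — identifying the facets of $B(M_1\s M_2)$ and checking each splits the vertex set into two affine-parallel classes — to be the main technical obstacle, since the rank function of a $2$-sum is not simply the sum of the rank functions of the parts.

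\textbf{The ``only if'' direction.} Suppose $B(M)$ is $2$-level; we want to show $M$ is built from uniform matroids by direct sums and $2$-sums. We may assume $M$ is connected (otherwise decompose as a direct sum and induct, using Lemma~\ref{lem:2-level-2^d}(2) that each summand's base polytope is a face hence $2$-level). If $M$ is not $3$-connected, then $M=M_1\s M_2$ with both parts smaller; each $M_i$ is a minor-related piece whose base polytope one shows is again $2$-level (via the product structure of Proposition~\ref{obs:2-sum-facts}(\ref{obs:2sum-part-2})), and we induct. So it remains to treat $M$ $3$-connected: here the claim is that $M$ must itself be uniform. This is where I would invoke the forbidden-minor description of uniform matroids and show that any $3$-connected non-uniform matroid contains a minor whose base polytope is not $2$-level — the natural candidates are the matroids $M(K_4)$, the prism, and $W^3$ (the rank-$3$ wheel) and a few small relatives, each of which has a minor on which some rank inequality takes three distinct values on vertices. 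The hard part of this direction is the case analysis for $3$-connected matroids and establishing that non-$2$-levelness is minor-closed in the needed sense (which again follows from Lemma~\ref{lem:2-level-2^d}(2), reading contractions as faces of a projection — one has to be slightly careful that base polytopes of minors are faces of coordinate projections of $B(M)$, and projections can destroy $2$-levelness, so the right statement is that $B(M/F\setminus F')$ is affinely a face of $B(M)$ after restricting and then this face inherits $2$-levelness). Given the subtlety, in the write-up I would instead simply cite~\cite{Grande14} for this characterization and devote the matroid section to the new contributions (Theorem~\ref{thr:matroid} and Lemma~\ref{thr:conj-matroid}), as the paper in fact does.
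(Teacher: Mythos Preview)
Your instinct at the end of the proposal is exactly right, and in fact matches what the paper does: Theorem~\ref{thr:matroid-2-level} is \emph{not} proved in this paper. It is stated with a citation to~\cite{Grande14}, and the ``About this version'' paragraph explicitly notes that an alternative proof present in the preliminary version has been removed from the current one. So there is no proof in the paper to compare your sketch against; the correct write-up here is simply the citation, as you yourself conclude.

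That said, a couple of remarks on the sketch itself. For the ``if'' direction, the cleanest route (and the one the paper's own machinery suggests) is Lemma~\ref{obs:2sumpolydescription}: $B(M_1\oplus_2 M_2)$ is linearly isomorphic to $B(M_1)\times B(M_2)\cap\{x_{p_1}+x_{p_2}=1\}$, and one checks that this particular hyperplane section of a product of $2$-level polytopes is again $2$-level (using that the section passes only through vertices of the product, as shown in the proof of that lemma). Your parametrization by $t=x(E_1-p)$ is essentially the same idea phrased differently. For the ``only if'' direction, your list of forbidden minors is a bit garbled --- $M(K_4)$ \emph{is} the rank-$3$ wheel $W^3$, and ``the prism'' is not a standard matroid. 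The argument in~\cite{Grande14} proceeds by showing that a $3$-connected non-uniform matroid has a flacet $F$ for which the inequality $x(F)\le r(F)$ attains at least three values on bases, which directly violates $2$-levelness; the minor-closure step you worry about is handled exactly as you say, via Lemma~\ref{lem:2-level-2^d}(2) applied to the faces $\{x_e=0\}$ and $\{x_e=1\}$.
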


The following lemma implies that we can, when looking at matroids in ${\cal M}$, decouple the operations of $2$-sum and direct sum. 

\begin{lemma}\label{lem:2-sums-then-product}
	Let $M$ be a matroid obtained by applying a sequence of direct sums and $2$-sums from the matroids $M_1,\dots,M_k$. Then $M=M'_1 \oplus M_2' \oplus ... \oplus M_t'$, where each of the $M_i'$ is obtained by repeated $2$-sums from some of the matroids $M_1,\dots,M_k$.
\end{lemma}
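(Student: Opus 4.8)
The plan is to prove the statement by induction on the number of operations used to build $M$ from $M_1,\dots,M_k$, tracking the structure of the ``$2$-sum blocks'' as they are assembled. The key observation is that the ground sets of the building matroids $M_1,\dots,M_k$ are essentially disjoint except for the shared elements used in $2$-sums, and that direct sums act on globally disjoint ground sets; so at any point in the construction the current matroid is a direct sum of connected pieces, each piece being a $2$-sum combination of some subfamily of the $M_i$'s. I would formalize this by saying: after any prefix of the construction sequence, the matroid built so far has the form $N_1\oplus\cdots\oplus N_s$ where each $N_j$ is obtained from a sub-collection of $\{M_1,\dots,M_k\}$ by a sequence of $2$-sums alone, and these sub-collections are pairwise disjoint (both as index sets and as ground sets, up to the shared $2$-sum elements).

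First I would set up the induction hypothesis in the form just described and check the base case (zero operations, so $M=M_i$ for a single $i$, trivially a $2$-sum block). For the inductive step I would consider the last operation applied. If it is a direct sum $M = M' \oplus M''$, then by induction each of $M'$ and $M''$ decomposes as a direct sum of $2$-sum blocks over disjoint sub-collections; since $E(M')\cap E(M'')=\emptyset$, concatenating the two lists of blocks gives the desired decomposition of $M$ with disjoint sub-collections. If the last operation is a $2$-sum $M = M'\oplus_2 M''$ along a common element $p$, then by induction $M' = N_1\oplus\cdots\oplus N_a$ and $M'' = N'_1\oplus\cdots\oplus N'_b$; the element $p$ lies in exactly one block on each side, say $N_1$ and $N'_1$. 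Here I would invoke Proposition~\ref{obs:2-sum-facts}\eqref{obs:2sum-part-3}: since $N_2\oplus\cdots\oplus N_a$ is a direct sum summand of $M'$ disjoint from $E(N'_1)$ (and from $p$), the $2$-sum distributes, giving $M = (N_1 \oplus_2 (N'_1 \oplus \cdots)) \oplus (N_2\oplus\cdots\oplus N_a)$, and applying the same fact again on the other side peels off $N'_2,\dots,N'_b$, yielding $M = (N_1 \oplus_2 N'_1) \oplus N_2\oplus\cdots\oplus N_a \oplus N'_2 \oplus\cdots\oplus N'_b$. The block $N_1\oplus_2 N'_1$ is then a $2$-sum combination of the union of the two underlying sub-collections, which is again a valid $2$-sum block, and all blocks remain over disjoint sub-collections.

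The main obstacle I expect is purely bookkeeping: making precise the claim that the shared $2$-sum elements do not cause the ground sets of distinct blocks to overlap in a way that breaks the direct-sum structure, and checking that the repeated application of Proposition~\ref{obs:2-sum-facts}\eqref{obs:2sum-part-3} is legitimate (its hypothesis requires that the element $p$ is not a loop or coloop in the relevant pieces, and that the ground-set disjointness conditions hold). A clean way to handle this is to maintain, as part of the induction hypothesis, that within each block the non-shared elements are genuinely private to that block and that each $2$-sum is always performed along an element that is non-loop, non-coloop in both arguments — conditions which are preserved because uniform matroids $U_{n,k}$ with $0<k<n$ have no loops or coloops, and $2$-sums/direct sums of such behave well (cf. Proposition~\ref{obs:2-sum-facts}\eqref{obs:2sum-part-1}). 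Once the induction hypothesis is stated with enough of these invariants, the inductive step is a routine application of the distributivity identity, and taking the final matroid $M$ gives the claimed $M = M'_1\oplus\cdots\oplus M'_t$ with each $M'_i$ a $2$-sum combination of some of the $M_j$'s.
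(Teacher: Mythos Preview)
Your approach is correct and is essentially the same as the paper's: the paper's proof is the one-liner ``Immediately from repeated applications of Proposition~\ref{obs:2-sum-facts}, part~\ref{obs:2sum-part-3}'', and your induction simply spells out what those repeated applications look like. One minor remark: your worry about loops/coloops is unnecessary (and the reference to uniform matroids is a red herring, since the lemma is stated for arbitrary $M_1,\dots,M_k$); the $2$-sum is only defined when $p$ is neither a loop nor a coloop in either summand, so if the construction sequence is well-defined at all, those hypotheses are automatically met.
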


\begin{proof}
	Immediately from repeated applications of Proposition~\ref{obs:2-sum-facts}, part~\ref{obs:2sum-part-3}. \end{proof}

\begin{proposition}\label{obs:uniform(mod)}
	Let $M\in\M$ be connected and non-uniform, with $M=U_1\oplus_2\dots U_t$, where $U_i$ are uniform matroids and $t>1$. Then we can assume without loss of generality that every $U_i$ has at least 3 elements.
\end{proposition}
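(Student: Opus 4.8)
The claim is that for a connected non-uniform matroid $M\in\M$ written as a sequence of $2$-sums $M=U_1\oplus_2\cdots\oplus_2 U_t$ of uniform matroids, we may assume every $U_i$ has at least $3$ elements. The plan is to show that any uniform matroid appearing in such a decomposition with at most $2$ elements can either be removed or absorbed into a neighbouring piece without changing $M$ (up to isomorphism), and then iterate. First I would recall which uniform matroids have a ground set of size $\leq 2$: these are $U_{1,0}$ and $U_{1,1}$ (a single loop or coloop) and $U_{2,0},U_{2,1},U_{2,2}$. Since the common element $p$ of a $2$-sum is by definition neither a loop nor a coloop in either factor, $U_{1,0}$ and $U_{1,1}$ cannot participate as a factor of a $2$-sum at all (their only element would have to be the gluing element $p$), so the only cases to handle are the $2$-element uniform matroids, and among these $U_{2,0}$ and $U_{2,2}$ again have every element a loop or coloop, so effectively the only problematic factor is $U_{2,1}$ (two parallel elements, i.e. a $2$-point line).

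The key step is the observation that $U_{2,1}$ is a ``neutral'' element for the $2$-sum: if $M'\oplus_2 U_{2,1}$ is formed by gluing along an element $p$ of $M'$ and one of the two elements of $U_{2,1}$, the result is isomorphic to $M'$, with the surviving element of $U_{2,1}$ playing the role of $p$ (this is immediate from the base-set formula in Proposition~\ref{obs:2-sum-facts}, part~\ref{obs:2sum-part-2}, using that $\B(U_{2,1}-p)$ and $\B(U_{2,1}/p)$ are each a single singleton/empty set). So I would argue: take the decomposition $M=U_1\oplus_2\cdots\oplus_2 U_t$; if some $U_i\cong U_{2,1}$, use part~\ref{obs:2sum-part-3} of Proposition~\ref{obs:2-sum-facts} together with associativity/commutativity of the $2$-sum (which lets us reorder so that $U_i$ is glued directly to some $U_j$, $j\neq i$) and the neutrality observation to merge $U_i$ into $U_j$, producing a shorter decomposition $M=U_1'\oplus_2\cdots\oplus_2 U_{t-1}'$ into uniform matroids. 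The degenerate cases where $U_i\cong U_{2,0}$ or $U_{2,2}$, or $t=1$ after reduction, either contradict connectedness/non-uniformity of $M$ or are excluded by hypothesis, so they cause no trouble. Iterating removes all factors with fewer than $3$ elements.

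The main obstacle I anticipate is bookkeeping about \emph{how} the $2$-sums are nested: the hypothesis only says $M$ arises from \emph{some} sequence of $2$-sums, and to glue $U_i\cong U_{2,1}$ onto a neighbour I need to know that the ``$2$-sum tree'' can be rearranged so that $U_i$ has a uniform neighbour to be absorbed into. This requires a clean statement that the class of matroids obtained from a fixed multiset of building blocks by $2$-sums does not depend on the order or nesting, which follows from the standard fact that $\oplus_2$ is associative and commutative on the appropriate common-element structure; I would either cite this from~\cite{oxley2006matroid} or spell it out using Proposition~\ref{obs:2-sum-facts}. A minor additional subtlety is ensuring the element relabelling stays consistent (the surviving element of $U_{2,1}$ inherits the gluing label), but this is routine once the neutrality claim is in place. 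With these pieces, the proof is a short induction on $t$.
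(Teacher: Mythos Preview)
Your proposal is correct and follows essentially the same approach as the paper's proof: rule out $1$-element matroids and $U_{2,0},U_{2,2}$ because the gluing element of a $2$-sum may be neither a loop nor a coloop, and then observe that $U_{2,1}$ acts as an identity for $\oplus_2$ (so any such factor can be absorbed). The paper's proof is terser and does not dwell on the tree-rearrangement bookkeeping you flag, but the core argument is identical.
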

\begin{proof} No matroid in a 2-sum can have ground set of size one, since the 2-sum is defined when the common element is not a loop or a coloop of either summand. For the same reason, we can exclude the matroids $U_{2,0}, U_{2,2}$. The only remaining uniform matroid on two elements is $U_{2,1}$. However, it is easy to see that for any matroid $M$, $M\oplus_2 U_{2,1}$ is isomorphic to $M$: if the ground set of $U_{2,1}$ is $\{p,e\}$, with $p$ being the element common to $M$, the 2-sum has the only effect of replacing $p$ by $e$ in $M$.
\end{proof}

The following fact can be easily derived from \cite[Lemma 3.4]{Grande14}, but for completeness we give a self-contained proof in Appendix~\ref{app:2sumpolydescription}. 

\begin{lemma}\label{obs:2sumpolydescription}
	Let $M_1(E_1,\B_1), M_2(E_2,\B_2)$ be matroids with $E_1\cap E_2=\{p\}$ and let $M=M_1\oplus_2 M_2$. Then $B(M)$ is linearly isomorphic to $B(M_1)\times B(M_2)\cap\{x\in\R^{E_1\uplus E_2}: x_{p_1}+x_{p_2}=1\}$, where $E_1\uplus E_2 = E_1 \cup E_2 \cup \{ p_1, p_2\} -p$ is the disjoint union of $E_1$ and $E_2$, with $p_1$ and $p_2$ corresponding to $p \in E_1$ and $p\in E_2$ respectively.
\end{lemma}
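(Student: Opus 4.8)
The plan is to exhibit an explicit linear map between the two polytopes by working directly with vertices. Write $E_1\uplus E_2 = E_1\cup E_2\cup\{p_1,p_2\}-p$ as in the statement, and let $\pi:\R^{E_1\uplus E_2}\to\R^{E}$ (recall $E=E_1\cup E_2-p$) be the linear map that is the identity on coordinates indexed by $E_1\cup E_2-p$ and simply forgets the coordinates $p_1,p_2$. Restricted to the affine subspace $\{x_{p_1}+x_{p_2}=1\}$, the map $\pi$ is injective: from $\pi(x)$ we recover all coordinates except $x_{p_1},x_{p_2}$, and these are pinned down by $x_{p_2}=1-x_{p_1}$ together with the fact (which I will use) that on $B(M_1)\times B(M_2)$ one always has $x_{p_1}\in\{0,1\}$ at vertices --- more to the point, $\pi$ is affine and injective on the whole hyperplane, so it is a linear isomorphism onto its image. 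Thus it suffices to show that $\pi$ maps the vertex set of $Q:=B(M_1)\times B(M_2)\cap\{x_{p_1}+x_{p_2}=1\}$ bijectively onto the vertex set of $B(M)$.

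The key step is the vertex identification, and it is essentially a restatement of Proposition~\ref{obs:2-sum-facts}, part~\ref{obs:2sum-part-2}. First I would argue that the vertices of $Q$ are exactly the points $(\chi^{B_1},\chi^{B_2})$ with $B_i\in\B(M_i)$ and $p\in B_1\triangle B_2$: the vertices of the product $B(M_1)\times B(M_2)$ are the pairs $(\chi^{B_1},\chi^{B_2})$ of characteristic vectors of bases, and intersecting with the hyperplane $x_{p_1}+x_{p_2}=1$ keeps precisely those pairs for which exactly one of $p_1,p_2$ is used, i.e. $p\in B_1\triangle B_2$; since this hyperplane is ``coordinate-parallel'' with respect to $0/1$ data, no new vertices are created (a standard fact, or one can invoke Lemma~\ref{lem:2-level-2^d} once 2-levelness is known — but cleaner is to note the intersection of a $0/1$-polytope with such a hyperplane has all its vertices among the original vertices). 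Then $\pi(\chi^{B_1},\chi^{B_2})=\chi^{B_1\cup B_2-p}$, and by Proposition~\ref{obs:2-sum-facts}\eqref{obs:2sum-part-2} the sets $B_1\cup B_2-p$ with $p\in B_1\triangle B_2$ range exactly over $\B(M)$. Injectivity of this correspondence on vertices follows because $\pi$ is injective on the hyperplane. Hence $\pi$ carries $\conv(\vertex Q)=Q$ onto $\conv(\vertex B(M))=B(M)$, and being an injective affine map on the ambient hyperplane, it is the claimed linear isomorphism.

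The main obstacle I anticipate is the bookkeeping around the ``split'' element $p$: one must be careful that $B(M_1)\times B(M_2)$ really does live in $\R^{E_1\uplus E_2}$ with $p$ duplicated into $p_1,p_2$, that the map $\pi$ (or rather its inverse on the hyperplane) is genuinely well-defined and linear, and that the decomposition $\B(M)=\{B_1\cup B_2-p: B_i\in\B(M_i),\ p\in B_1\triangle B_2\}$ is applied with the correct reading of Proposition~\ref{obs:2-sum-facts}. A secondary point needing a line of justification is that intersecting the $0/1$-polytope $B(M_1)\times B(M_2)$ with $\{x_{p_1}+x_{p_2}=1\}$ introduces no spurious vertices; this is immediate since every vertex of $B(M_1)\times B(M_2)$ has $x_{p_1}+x_{p_2}\in\{0,1,2\}$, so the hyperplane meets the polytope only along the face spanned by the vertices with $x_{p_1}+x_{p_2}=1$, which is exactly their convex hull. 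With these points in place the proof is complete; the remaining details are the routine verifications deferred to Appendix~\ref{app:2sumpolydescription}.
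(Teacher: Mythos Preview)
Your overall strategy coincides with the paper's: define the projection $\pi$ that forgets the coordinates $p_1,p_2$, identify the vertices of $Q$ via Proposition~\ref{obs:2-sum-facts}\eqref{obs:2sum-part-2}, and check that $\pi$ carries $Q$ bijectively onto $B(M)$. However, your injectivity argument has a genuine gap.

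You assert that $\pi$ is ``affine and injective on the whole hyperplane'' $H=\{x_{p_1}+x_{p_2}=1\}$. This is false by a dimension count: $H$ has dimension $|E_1\uplus E_2|-1=|E|+1$, while the target $\R^E$ has dimension $|E|$, so $\pi|_H$ has a one-dimensional kernel (the line spanned by $e_{p_1}-e_{p_2}$). Knowing $x_{p_2}=1-x_{p_1}$ and that $x_{p_1}\in\{0,1\}$ at vertices still does not tell you \emph{which} value $x_{p_1}$ takes. What actually pins down $x_{p_1}$ is the rank equation $x(E_1)=\rk(M_1)$, valid on all of $B(M_1)$: from $\pi(x)$ one recovers $x_{p_1}=\rk(M_1)-\sum_{e\in E_1-p}x_e$, and then $x_{p_2}$ from the hyperplane equation. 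This is exactly how the paper establishes that $\pi$ is injective on $Q$ (not on all of $H$).

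A secondary point: your ``no new vertices'' justification claims that because $x_{p_1}+x_{p_2}\in\{0,1,2\}$ at every vertex, the hyperplane meets the polytope only in the face spanned by the vertices with value $1$. This is not a general fact about $0/1$ polytopes (take the simplex on $(0,0,0),(1,1,0),(1,0,1),(0,1,1)$ with the functional $x_1+x_2$: the level set $\{x_1+x_2=1\}$ picks up the non-vertex midpoint $(1/2,1/2,0)$). What rescues the argument here is the product structure: every edge of $B(M_1)\times B(M_2)$ moves in only one factor, so $x_{p_1}+x_{p_2}$ changes by at most $1$ along any edge and hence no edge jumps from value $0$ to value $2$ across $H$. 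The paper makes precisely this edge argument explicit.
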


\begin{proposition}\label{lem:facet-defining}
	Let $M\in \cal{M}$ be such that $M=M_1\s U$ where $U=U_{n,k}$ is a 3-connected uniform matroid with $n\geq 3$. Then $f_{d-1}(B(M))\leq f_{d_1 -1}(B(M_1))+2(n-1)$; and if $n=3$ then $f_{d-1}(B(M))\leq f_{d_1 -1}(B(M_1))+2$.
\end{proposition}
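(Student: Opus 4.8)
The plan is to use the structural description of $B(M)$ coming from Lemma~\ref{obs:2sumpolydescription}, which says that $B(M)$ is linearly isomorphic to $B(M_1)\times B(U)\cap \{x_{p_1}+x_{p_2}=1\}$. Since $U=U_{n,k}$ with $n\geq 3$, its base polytope is the hypersimplex $B(U_{n,k})$, a polytope whose facets are completely understood: for $k\notin\{0,1,n-1,n\}$ it has exactly $2n$ facets, namely $x_i\geq 0$ and $x_i\leq 1$ for each of the $n$ coordinates; for $k\in\{1,n-1\}$ it is a simplex with $n$ facets; and the degenerate cases $k\in\{0,n\}$ are excluded because $p$ is neither a loop nor a coloop of $U$. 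So the intersection $B(M_1)\times B(U)\cap\{x_{p_1}+x_{p_2}=1\}$ is obtained from the product $B(M_1)\times B(U)$ by slicing with one hyperplane. The core of the argument is to bound how many facets this slice can have in terms of $f_{d_1-1}(B(M_1))$ and $n$.

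First I would set up the counting carefully. The product $B(M_1)\times B(U)$ has facets of two types: $G\times B(U)$ for each facet $G$ of $B(M_1)$, and $B(M_1)\times H$ for each facet $H$ of $B(U)$. Intersecting with the hyperplane $\mathcal{H}=\{x_{p_1}+x_{p_2}=1\}$, each facet of the product contributes at most one facet of the slice (the intersection of that facet with $\mathcal{H}$, when it is a facet of the slice and not lower-dimensional), and in addition there may be facets of the slice that are not induced by facets of the product — but for a slice by a single hyperplane of a polytope, every facet of the slice is either the intersection with $\mathcal{H}$ of a facet of the original polytope, or else $\mathcal{H}$ itself cuts the interior; the latter gives no new facet beyond intersections of existing facets with $\mathcal{H}$. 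Hence $f_{d-1}(B(M))\leq f_{d_1-1}(B(M_1))+f_{n-1}(B(U))$. With $f_{n-1}(B(U_{n,k}))\leq 2n$ in general and $\leq n$ when $k\in\{1,n-1\}$, this already gives $f_{d-1}(B(M))\leq f_{d_1-1}(B(M_1))+2n$, which is weaker than the claimed $+2(n-1)$ by $2$; the refinement to $2(n-1)$ should come from observing that the two facets of $B(U)$ corresponding to $x_{p}\geq 0$ and $x_{p}\leq 1$ (equivalently the coordinate $p_2$) interact with the slicing hyperplane $x_{p_1}+x_{p_2}=1$ in a way that makes their intersections with $\mathcal{H}$ either redundant or coincide with facets already counted on the $M_1$ side — essentially because on $\mathcal{H}$ the constraints $0\le x_{p_2}\le 1$ become $0\le x_{p_1}\le 1$, which are already among the facet-inequalities of $B(M_1)\subseteq[0,1]^{E_1}$ (recall $p_1\in E_1$, and $x_{p_1}\in[0,1]$ is valid but possibly not facet-defining for $B(M_1)$; in any case it does not add to the count beyond $f_{d_1-1}(B(M_1))$, which already includes the facet induced by the $p_1$-coordinate if there is one). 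Carrying this bookkeeping through removes exactly the two extra facets, yielding $f_{d-1}(B(M))\leq f_{d_1-1}(B(M_1))+2(n-1)$. When $n=3$, the hypersimplex $B(U_{3,k})$ with $k\in\{1,2\}$ is a triangle with $3$ facets, and after the same cancellation of the $p_2$-coordinate facets one is left with at most $2$ genuinely new facets, giving $f_{d-1}(B(M))\leq f_{d_1-1}(B(M_1))+2$.

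I would also need to handle the reduction to the case where $U$ is $3$-connected uniform with $n\geq 3$: by Proposition~\ref{obs:uniform(mod)} the only uniform matroids that can appear nontrivially have at least $3$ elements, and $3$-connectedness of $U_{n,k}$ excludes precisely the loop/coloop cases, so the hypothesis is exactly the generic situation, and the values of $f_{n-1}(B(U_{n,k}))$ above cover all admissible $(n,k)$.

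The main obstacle I anticipate is the precise accounting in the last step: showing that slicing the product by the single hyperplane $\{x_{p_1}+x_{p_2}=1\}$ does not create facets beyond $f_{d_1-1}(B(M_1))+2(n-1)$, and in particular that the intersections with $\mathcal H$ of the two facets of $B(U)$ governing the common element $p$ do not contribute new facets. This requires verifying that on the hyperplane $\mathcal H$ those two inequalities become coordinate inequalities $0\le x_{p_1}\le 1$ that are dominated by (or identified with) constraints already present in the description of $B(M_1)\times B(U)\cap\mathcal H$, and that no facet of the product becomes lower-dimensional in a way that would force a previously-redundant inequality to become facet-defining in the slice (which would not increase the bound, but must be checked not to be double-counted). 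Making this rigorous will likely require either a direct inequality-by-inequality analysis of the description~\eqref{eq:base} of $B(M)$, or a clean geometric lemma about facets of a hyperplane section of a Cartesian product; I would pursue the latter since it isolates the combinatorics cleanly.
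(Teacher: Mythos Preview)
Your approach is essentially the same as the paper's: both use Lemma~\ref{obs:2sumpolydescription} to realize $B(M)$ as the slice $Q=B(M_1)\times B(U)\cap\{x_{p_1}+x_{p_2}=1\}$, bound the facets of $Q$ by those of the product, and then observe that the two inequalities $0\le x_{p_2}\le 1$ are redundant because on the hyperplane they reduce to $0\le x_{p_1}\le 1$, which is already implied by the description of $B(M_1)\subseteq[0,1]^{E_1}$ (for $n=3$ only one of the three triangle facets involves $p_2$, so one drops a single inequality, not two).

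The only difference is that your final paragraph overcomplicates the accounting. You do not need a separate geometric lemma about hyperplane sections of Cartesian products: since $Q$ is \emph{defined} as the solution set of the facet inequalities of $B(M_1)$, the facet inequalities of $B(U)$, and the single equation $x_{p_1}+x_{p_2}=1$, every facet of $Q$ is already induced by one of these inequalities. No new facet can appear from the slice, so the bound $f_{d-1}(B(M))\le f_{d_1-1}(B(M_1))+f_{d_2-1}(B(U))$ is immediate, and the refinement is purely the redundancy check you identified. The paper handles this in one line; you can too.
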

\begin{proof}
	Using Lemma~\ref{obs:2sumpolydescription}, we obtain that $B(M)$ is linearly isomorphic to $Q=B(M_1)\times B(U) \cap \{x\in\R^{E_1\uplus E_2}: x_{p_1}+x_{p_2}=1\}$, where $E_1,E_2, p, p_1,p_2$ are defined as before. From this it follows that $f_{d-1}(B(M))\leq f_{d_1 -1}(B(M_1))+f_{d_2 -1}(B(U))$, where $d_2$ is the dimension of $B(U)$. Moreover, as already remarked $B(U)=\{x\in \R^{d_2}: 0\leq x \leq 1, \sum_i x_i=k\}$ hence $f_{d_2 -1}(B(U))\leq 2n$ and $f_{d-1}(B(M))\leq f_{d_1 -1}(B(M_1))+2n$. To slightly sharpen the bound, we claim that the inequalities $0\leq x_{p_2} \leq 1$ present in the description of $Q$ are redundant, which proves the first part of the thesis. Indeed, they are immediately implied by the inequalities $0\leq x_{p_1}\leq 1$ (which must be implied by the description of $B(M_1)$) together with the equation $x_{p_1}+x_{p_2}=1$.
	
	We now consider the case $n=3$. It is immediate to check that there are two cases, $U=U_{3,1}$ and $U=U_{3,2}$, but for both $B(U)$ is isomorphic to a triangle in the plane, and hence $f_{d_2 -1}(B(U))=3$, with one inequality for each variable: for instance, a description of $B(U_{3,1})$ is $\{x\in \R^3: x\geq 0, x_1+x_2+x_3=1\}$. Arguing as before, we obtain that in the resulting description of $Q$ the inequality relative to $x_{p_2}$ is redundant, thus getting the desired bound.
\end{proof}

\begin{lemma}\label{thr:conj-matroid}
	2-level matroid base polytopes satisfy Conjecture~\ref{main-conjecture}.
\end{lemma}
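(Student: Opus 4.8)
The plan is to reduce the general case to connected matroids via Lemma~\ref{lem:cartesian-products-are-good} (using Lemma~\ref{obs:2sumpolydescription} only indirectly), then handle connected matroids in $\M$ by induction on the number of uniform pieces in a $2$-sum decomposition, using Theorem~\ref{thr:matroid-2-level} and Lemma~\ref{lem:2-sums-then-product}. Concretely: if $M$ is a direct sum $M = M_1 \oplus M_2$ with both summands having fewer elements, then $B(M) = B(M_1) \times B(M_2)$, and we are done by Lemma~\ref{lem:cartesian-products-are-good}. So by Lemma~\ref{lem:2-sums-then-product} it suffices to treat a \emph{connected} $M\in\M$, which by Theorem~\ref{thr:matroid-2-level} is either uniform or of the form $M = U_1 \oplus_2 \cdots \oplus_2 U_t$ for uniform matroids $U_i$; by Proposition~\ref{obs:uniform(mod)} we may assume every $U_i$ has at least $3$ elements.

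First I would dispatch the base case where $M = U_{n,k}$ is uniform: then $B(M)$ is a hypersimplex of dimension $d = n-1$, with $f_0(B(M)) = \binom{n}{k}$ and $f_{d-1}(B(M)) \leq 2n = 2(d+1)$ (the inequality $0 \leq x_i \leq 1$ for each coordinate, with some of these redundant when $k\in\{1,n-1\}$). Since $\binom{n}{k} \leq 2^{n-1} = 2^d$, we get $f_0 f_{d-1} \leq 2^d \cdot 2(d+1) = (d+1)2^{d+1}$, which is a factor $(d+1)/d$ too large — so I need to be slightly more careful. The honest bound is: for $k\in\{0,n\}$ the polytope is a point ($d=0$, trivial); for $k\in\{1,n-1\}$ it is a simplex, $f_0 = n = d+1$, $f_{d-1} = n = d+1$, and $(d+1)^2 \leq d 2^{d+1}$ holds for $d\geq 2$ (and for $d=1$ the simplex is a segment, equality $1\cdot 2 = 1\cdot 2^2/2$); for $2\leq k\leq n-2$ we have $n\geq 4$, $d\geq 3$, $f_0 = \binom nk \leq \binom{n}{\lfloor n/2\rfloor}$ and $f_{d-1}\leq 2n = 2(d+1)$, and one checks $\binom{n}{\lfloor n/2\rfloor}\cdot 2(d+1) \leq d 2^{d+1}$ directly (the central binomial coefficient is comfortably below $2^{n-1}\cdot d/(d+1)$ for $n\geq 4$). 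I would also observe that equality forces the simplex/segment situation to degenerate, i.e. never the cube or cross-polytope except in dimension $1$ where the segment is both.

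For the inductive step, write $M = M_1 \oplus_2 U$ with $U = U_{n,k}$ a $3$-connected uniform matroid, $n\geq 3$, and $M_1 \in \M$ with fewer elements (possible by peeling off one uniform summand from the $2$-sum decomposition). Let $d_1 = \dim B(M_1)$; then $d = d_1 + (n-1) = d_1 + \dim B(U)$. For the vertex count, Proposition~\ref{obs:2-sum-facts}(\ref{obs:2sum-part-2},\ref{obs:2sum-part-2a}) gives $|\B(M)| = |\B(M_1-p)|\cdot|\B(U/p)| + |\B(M_1/p)|\cdot|\B(U-p)| \leq |\B(M_1)|\cdot\max(|\B(U/p)|,|\B(U-p)|)$, and since $U-p$ and $U/p$ are uniform on $n-1$ elements their base sets have size at most $\binom{n-1}{\lfloor(n-1)/2\rfloor}\leq 2^{n-2}$; combined with $|\B(M_1)|\leq 2^{d_1}$ this gives $f_0(B(M)) \leq 2^{d_1}\cdot 2^{n-2} = 2^{d-1}$. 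Actually a cleaner route: $|\B(M_1-p)| + |\B(M_1/p)| = |\B(M_1)| \leq 2^{d_1}$ and $|\B(U/p)|, |\B(U-p)| \leq 2^{n-2}$, so $f_0(B(M)) \leq 2^{d_1}\cdot 2^{n-2} = 2^{d-1}$ again — but I should double-check whether this strict halving of the trivial bound $2^d$ always holds, as it is what gives us room. For the facet count, Proposition~\ref{lem:facet-defining} gives $f_{d-1}(B(M)) \leq f_{d_1-1}(B(M_1)) + 2(n-1)$, and by induction $f_{d_1-1}(B(M_1)) \leq 2(d_1+1) + (\text{corrections})$; the key point is that for a connected $2$-level matroid base polytope built from $t$ uniform pieces, each $2$-sum adds at most $2(n_i - 1)$ facets while adding $n_i - 1$ to the dimension, so $f_{d-1}(B(M)) \leq 2(d+1)$ roughly, with the $n=3$ case of Proposition~\ref{lem:facet-defining} (only $+2$ facets per triangle piece) being exactly what saves the bound when many small pieces are glued. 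Putting $f_0 \leq 2^{d-1}$ and $f_{d-1} \leq 2(d+1)$ together gives $f_0 f_{d-1} \leq 2^{d-1}\cdot 2(d+1) = (d+1)2^d \leq d 2^{d+1}$ with strict inequality.

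\textbf{Main obstacle.} The delicate point is bookkeeping the facet count precisely enough through the $2$-sum induction: a naive bound $f_{d-1}(B(M))\leq f_{d_1-1}(B(M_1)) + 2n$ combined with $n = (\dim\ \text{added}) + 1$ leaks an additive $2$ per $2$-sum operation, and with $\Theta(d)$ pieces (all triangles) this would blow the bound by a factor growing with $d$. The fix is precisely the refinement in Proposition~\ref{lem:facet-defining} — the redundancy of the $x_{p_2}$ inequality, and especially the sharper $+2$ (not $+4$ or $+6$) bound in the $n=3$ case — so the induction hypothesis must be stated carefully, e.g. as $f_{d-1}(B(M)) \leq 2d$ for connected non-uniform $M\in\M$ with $\dim \geq 2$, and checked that uniform pieces of size $n\geq 4$ contribute at most $2(n-1) = 2(\dim\text{ added})$ facets net after discarding the redundant inequality. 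Once the right inductive invariant on facets is pinned down, combining with the vertex halving $f_0 \leq 2^{d-1}$ closes everything with room to spare, and the equality analysis is then immediate since $(d+1)2^d < d2^{d+1}$ strictly for all $d\geq 1$, so no $2$-level matroid base polytope of dimension $\geq 2$ achieves equality (and the $1$-dimensional case is the segment, which is both a cube and cross-polytope).
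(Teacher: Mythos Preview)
Your overall architecture (reduce to connected via Lemma~\ref{lem:cartesian-products-are-good}, then induct by peeling off one uniform summand using Proposition~\ref{lem:facet-defining}) is the same as the paper's, but the heart of your inductive step contains a genuine error that makes the argument collapse.

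\medskip

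\textbf{The dimension formula is off by one.} You write $d = d_1 + (n-1) = d_1 + \dim B(U)$. This is wrong: if $M = M_1 \oplus_2 U$ with $|E(M_1)| = n_1$ and $|E(U)| = n$, then $|E(M)| = n_1 + n - 2$, so for connected $M$ one has $d = n_1 + n - 3 = d_1 + (n-2)$. (Geometrically, this is the extra equation $x_{p_1}+x_{p_2}=1$ in Lemma~\ref{obs:2sumpolydescription}.) With the correct formula, your ``vertex halving'' computation $|\B(M)| \leq |\B(M_1)|\cdot 2^{n-2} \leq 2^{d_1}\cdot 2^{n-2}$ gives only $2^d$, the trivial bound, not $2^{d-1}$. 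And $f_0 \leq 2^{d-1}$ is genuinely false: for $M = U_{3,1}\oplus_2 U_{3,2}$ one has $d=3$ and $|\B(M)|=5 > 4 = 2^{d-1}$. So the saving you need cannot come from the crude bound $\max\bigl(\binom{n-1}{k-1},\binom{n-1}{k}\bigr)\leq 2^{n-2}$.

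\medskip

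\textbf{What the paper does instead.} The key estimate you are missing is $\binom{n}{k}\leq \tfrac{3}{4}\,2^{n-1}$ for all $n\geq 3$ and all $k$. Applied to $U-p$ and $U/p$ (which are uniform on $n-1\geq 3$ elements when $n\geq 4$), this gives $f_0(B(M))\leq \tfrac{3}{4}\,2^{n-2}\,|\B(M_1)|$; that $\tfrac34$ is exactly the slack needed to close the induction against $f_{d-1}(B(M))\leq f_{d_1-1}(B(M_1)) + 2(n-1)$ and the inductive hypothesis on $M_1$. The case $n=3$ is handled separately with the sharper $+2$ facet bound from Proposition~\ref{lem:facet-defining}. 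Your proposed facet invariant $f_{d-1}\leq 2d$ (or $2(d+1)$) is also not maintainable in general, since a 2-sum with a piece of size $n$ adds $n-2$ to the dimension but up to $2(n-1)$ facets.

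\medskip

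\textbf{Equality case.} Your claim that no uniform matroid gives a cube or cross-polytope beyond dimension~$1$ is false: $B(U_{4,2})$ is (affinely) the $3$-dimensional cross-polytope, with $f_0 f_{d-1} = 6\cdot 8 = 48 = 3\cdot 2^4$, so equality is achieved. Your own check for $n=4$, $k=2$ would have caught this: $\binom{4}{2}\cdot 2(d+1) = 6\cdot 8 = 48 = d\,2^{d+1}$, so ``comfortably below'' is incorrect there.
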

\begin{proof} 
	We will use the fact that, for any $n\geq 3$ and any $k\in\{0,\dots,n\}$, $\binom{n}{k}\leq \frac{3}{4}2^{n-1}$. This can be easily proved by induction. We prove the conjecture on the polytope $B(M)$, for each matroid $M=(E,\B)\in \M$, and we prove it by induction on the number of elements $n=|E|$. The base cases $n\leq 3$ can be easily verified.
	
	If $M$ is not connected, then $M=M_1\oplus M_2$ for two matroids $M_1,M_2\in\M$, each with fewer elements than $M$, so by induction hypothesis the conjecture holds for them. The base polytope $B(M)$ is simply the Cartesian product of $B(M_1)$ and $B(M_2)$, so by Lemma~\ref{lem:cartesian-products-are-good} the conjecture also holds for $B(M)$, and is tight only if $B(M)$ is a cube.
	
	Assume from now on that $M$ is connected. In~\cite{Grande14}, it is proven that the smallest affine subspace containing the base polytope of a connected matroid on $n$ elements is of dimension $d=n-1$. If $M$ is  uniform, $M=U_{n,k}$, the number of vertices in $B(M)$ is $f_0=|\B|=\binom{n}{k}\leq \frac{3}{4} 2^{n-1}$, where we assumed $n\geq 3$. And in view of Proposition~\ref{obs:uniform(mod)}, the constraints of the form $0\leq x\leq 1$ are sufficient to define $B(M)$, hence the number of facets is $f_{d-1}\leq 2n$. Therefore, $f_0 f_{d-1}\leq \frac{3}{4} n2^n \leq (n-1)2^n =d2^{d+1}$, where the last inequality is loose for $n\geq 5$. The only examples with $n\leq 4$ for which the conjecture is tight correspond to cubes, and the 3-dimensional cross-polytope coming from $U_{4,2}$.
	
	Finally, assume that $M$ is connected but is not uniform, so it is not 3-connected. Then $M=M_1\s M_2$, with matroids $M_1,M_2\in\M$ each with fewer elements than $M$, so by induction hypothesis the conjecture holds for both of them. Let $E(M_1)\cap E(M_2)=\{p\}$. Both $M_1$ and $M_2$ are connected, by Proposition~\ref{obs:2-sum-facts}. We can assume without loss of generality that $E(M_1)=n_1\geq n_2=E(M_2)$, and that $M_2$ is uniform, $M_2=U_{n_2,k_2}$, with $n_2\geq 3$ (by Proposition~\ref{obs:uniform(mod)}). We consider two cases for the value of $n_2$.
	
	\smallskip
	Case $n_2\geq 4$: first notice that the family $\M$ is closed under removing or contracting an element. This is because if $e\in M\in\M$, the base polytopes $B(M-e)$ and $B(M/e)$ are affinely isomorphic to the faces of $B(M)$ that intersect the hyperplanes $x_e=0$ and $x_e=1$, respectively, and by Lemma~\ref{lem:2-level-2^d} these faces are also 2-level. Hence, we know from Proposition~\ref{obs:2-sum-facts} that 
	\begin{align*}
		f_0 &=|\B(M)|=|\B_{M_1 -p}|\cdot |\B_{U_{n_2,k_2}/p}|+|\B_{M_1 /p}|\cdot |\B_{U_{n_2,k_2}-p}| \\
		&=\binom{n_2-1}{k_2-1}|\B_{M_1 -p}|+\binom{n_2-1}{k_2}|\B_{M_1 /p}| \\
		&\leq \frac{3}{4}2^{n_2 -2}\left(|\B_{M_1 -p}|+|\B_{M_1 /p}|\right) = \frac{3}{4} 2^{d_2-1}|\B(M_1)|.
	\end{align*}
	From Proposition~\ref{lem:facet-defining}, the number of facets in $B(M)$ is $$f_{d-1}(B(M))\leq f_{d_1 -1}(B(M_1))+2(n_2-1)=f_{d_1 -1}(B(M_1))+2d_2.$$ We use the induction hypothesis in $M_1$, and the trivial bound $|\B(M_1)|\leq 2^{d_1}$ to obtain:
	\begin{align*}
		f_0f_{d-1}(B(M))&<\frac{3}{4}2^{d_2-1}|\B(M_1)|\left(f_{d_1 -1}(B(M_1))+2d_2\right)\\
		&\leq \frac{3}{4}2^{d_2-1}\left(d_12^{d_1+1} +2^{d_1}(2d_2)\right)\\
		&=\frac{3}{4}(d_1+d_2)2^{d_1+d_2}<(d_1+d_2-1)2^{d_1+d_2}=d2^{d+1}. 
	\end{align*}
	Where in the last inequality we used the fact that $n_1\geq n_2\geq 4$, so $d_1\geq d_2\geq 3$.
	
	\smallskip
	Case $n_2=3$: We can prove in a similar manner as before that
	$$f_0=|\B(M)|<\binom{2}{1} \left(|\B(M_1 - p)|+|\B(M_1 / p)|\right)=2|\B(M_1)|.$$
	And from Proposition~\ref{lem:facet-defining}, $f_{d-1}(B(M))\leq f_{d_1 -1}(B(M_1))+2$. Thus,
	$$f_0f_{d-1}(B(M))<2|\B(M_1)|\left(f_{d_1 -1}(B(M_1))+2\right)\leq 2\left(d_1 2^{d_1+1}+ 2^{d_1}\cdot 2\right)=d2^{d+1}.$$
	We conclude by remarking that, since the inequalities above hold strictly, the only 2-level base polytopes satisfying the bound of Conjecture~\ref{main-conjecture} are cubes and cross-polytopes.
\end{proof}

As the forest matroid of a graph $G$ is in $\M$ if and only if $G$ is series-parallel~\cite{Grande14}, we deduce the following.

\begin{corollary}\label{cor:spanningtreeSP}
	Conjecture~\ref{main-conjecture} is true for the spanning tree polytope of series-parallel graphs.
\end{corollary}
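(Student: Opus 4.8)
The plan is to combine the characterization of when the forest matroid of a graph is $2$-level with the already-proven bound for $2$-level matroid base polytopes (Lemma~\ref{thr:conj-matroid}). First I would recall that the forest matroid $M(G)$ of a graph $G$ (whose bases are the maximal forests, i.e., the spanning forests) has base polytope equal to the spanning tree polytope when $G$ is connected, and more generally a Cartesian product of the spanning tree polytopes of the connected components of $G$. Thus, by Lemma~\ref{lem:cartesian-products-are-good}, it suffices to handle the connected case, so assume $G$ is connected and $B(M(G))$ is the spanning tree polytope of $G$.

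Next I would invoke the cited result of Grande and Sanyal (reference~\cite{Grande14}) that the forest matroid $M(G)$ lies in the class $\M$ — equivalently, its base polytope $B(M(G))$ is $2$-level — if and only if $G$ is series-parallel. Hence, whenever $G$ is series-parallel, $M(G)\in\M$, and Lemma~\ref{thr:conj-matroid} directly applies to $B(M(G))$: we get $f_0(B(M(G)))\,f_{d-1}(B(M(G)))\leq d\,2^{d+1}$, where $d$ is the dimension of the spanning tree polytope of $G$ (which, as noted in the excerpt, equals $|E(G)|$ minus the number of components, i.e. $|E(G)|-1$ in the connected case). This is precisely Conjecture~\ref{main-conjecture} for the spanning tree polytope of a series-parallel graph.

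The only subtlety — and the one step that is not entirely automatic — is the reduction from "forest matroid" to "spanning tree polytope": one must be careful that the spanning tree polytope of $G$ is the base polytope of $M(G)$ on the nose (as a polytope in $\R^{E(G)}$), not merely affinely isomorphic to a face of it, and that series-parallelness of $G$ is exactly the hypothesis the characterization needs. Since series-parallel graphs are connected by the usual convention (or one passes to $2$-connected blocks and uses the product structure), this matching of hypotheses is clean, and the statement follows immediately. I therefore expect the main "obstacle" to be merely bookkeeping about components and the identification $B(M(G)) = $ spanning tree polytope; all the real work has already been done in Lemma~\ref{thr:conj-matroid} and in the characterization of~\cite{Grande14}.

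\begin{proof}
If $G$ is series-parallel, then its forest matroid $M(G)$ belongs to $\M$ by~\cite{Grande14}, i.e., $B(M(G))$ is $2$-level. When $G$ is connected, $B(M(G))$ is exactly the spanning tree polytope of $G$, of dimension $d=|E(G)|-1$; when $G$ is disconnected it is the Cartesian product of the spanning tree polytopes of its components, so by Lemma~\ref{lem:cartesian-products-are-good} it suffices to treat the connected case. In all cases $B(M(G))$ is a $2$-level matroid base polytope, so Lemma~\ref{thr:conj-matroid} yields $f_0(B(M(G)))\,f_{d-1}(B(M(G)))\leq d\,2^{d+1}$, which is the statement of Conjecture~\ref{main-conjecture} for the spanning tree polytope of $G$.
\end{proof}
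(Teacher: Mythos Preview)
Your proposal is correct and follows exactly the paper's approach: the paper simply observes that the forest matroid of $G$ lies in $\M$ if and only if $G$ is series-parallel (by~\cite{Grande14}), and then invokes Lemma~\ref{thr:conj-matroid}. Your additional bookkeeping about connected components is more careful than strictly needed (series-parallel graphs are connected by definition), but harmless.
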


\subsection{Linear Description of 2-Level Matroid Base Polytopes}\label{sec:basepolytope}
With the help of Proposition~\ref{lem:facet-defining}, one can easily prove by induction that for any $M\in {\cal M}$ the number of facets of $B(M)$ is linear in the size of the ground set. However, the description of $B(M)$ given in \eqref{eq:base} has exponentially many inequalities. Finding compact description for the base and the independent set polytopes of matroids has been the object of many studies, especially in terms of extended formulations: see~\cite{rothvoss2013some} for a negative result, and~\cite{conforti2015subgraph},~\cite{iwata2016extended},~\cite{kaibel2016extended} for formulations for special classes of matroids. In particular in~\cite{kaibel2016extended} a polynomial size (extended) formulation is given for the class of regular matroids which relies on structural results of Seymour~\cite{seymour1980decomposition} and can be obtained in polynomial time given an independence oracle for the matroid. These results can be seen as generalizations of the formulations given for the spanning tree polytope by Martin~\cite{martin1991using}.
In this section we give an explicit description of 2-level base matroids with linearly many inequalities. The rank inequalities needed in our description have a natural interpretations in terms of the combinatorial structure of the matroid, in a similar fashion as in~\cite{kaibel2016extended}. Our description can also be obtained in polynomial time. 

Since the base polytope of the direct sum of matroids is the Cartesian product of the base polytopes, to obtain a linear description of $B(M)$ for $M\in \cal M$, we can focus on base polytopes of connected matroids.
Any connected matroid can be seen as a sequence of $2$-sums, which can be represented via a tree (see Figure~\ref{fig}): the following is a version of \cite[Proposition 8.3.5]{oxley2006matroid} tailored to our needs. 
For completeness, a proof is given in Appendix~\ref{app:oxley2006matroid-tree}. 

\begin{theorem}
	\label{thm:oxley2006matroid-tree}
	Let $M$ be a connected matroid. Then there are 3-connected matroids $M_1,\dots M_t$, and a $t$-vertex tree $T=T(M)$ with edges labeled $e_1,\dots, e_{t-1}$ and vertices labeled $M_1,\dots, M_t$, such that
	\begin{enumerate}
		\item $E(M)\cap\{e_1,\dots,e_{t-1}\}=\emptyset$, and  $E(M_1)\cup E(M_2) \cup \cdots \cup E(M_t)=E(M)\cup \{e_1,\dots,e_{t-1}\}$;
		\item if the edge $e_i$ joins the vertices $M_{j_1}$ and $M_{j_2}$, then $E( M_{j_1})\cap E(M_{j_2})=\{e_i\}$;
		\item if no edge  joins the vertices $M_{j_1}$ and $M_{j_2}$, then $E( M_{j_1})\cap E(M_{j_2})=\emptyset$.
	\end{enumerate}
	Moreover, $M$ is the matroid that labels the single vertex of the tree $T/e_1,\dots,e_{t-1}$ at the conclusion of the following process: contract the edges $e_1,\dots,e_{t-1}$ of $T$ one by one in order; when $e_i$ is contracted, its ends are identified and the vertex formed by this identification is labeled by the 2-sum of the matroids that previously labeled the ends of $e_i$.
\end{theorem}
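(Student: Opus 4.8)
The plan is to prove Theorem~\ref{thm:oxley2006matroid-tree} by induction on the number of elements of the connected matroid $M$, mirroring the structure of~\cite[Proposition 8.3.5]{oxley2006matroid} but keeping track of the tree. The base case is when $M$ is itself $3$-connected (or small enough that it cannot be written as a nontrivial $2$-sum): here $t=1$, the tree $T$ is a single vertex labeled $M_1=M$, there are no edges $e_i$, and all three numbered conditions hold vacuously; the contraction process produces $M$ trivially.

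For the inductive step, suppose $M$ is connected but not $3$-connected. By definition $M=N_1\oplus_2 N_2$ for matroids $N_1,N_2$ each with strictly fewer elements than $M$, sharing a single common element $p$ which is neither a loop nor a coloop in either. By Proposition~\ref{obs:2-sum-facts}(\ref{obs:2sum-part-1}), both $N_1$ and $N_2$ are connected, so the induction hypothesis applies to each: we get trees $T(N_1)$ and $T(N_2)$ with vertices labeled by $3$-connected matroids and edges labeled by fresh basepoint elements, satisfying (1)--(3) and reproducing $N_1$, $N_2$ respectively via the contraction process. The element $p$ lies in the ground set of exactly one vertex-matroid of $T(N_1)$, call it $M^{(1)}$, and exactly one vertex-matroid of $T(N_2)$, call it $M^{(2)}$ — this follows from condition (3) applied within each tree, since $p\in E(N_i)$ and the $E(M_j)$ of a single tree overlap only in labeled edges, which are disjoint from $E(M)\cup\{p\}$ by (1). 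Now form $T(M)$ by taking the disjoint union of $T(N_1)$ and $T(N_2)$, adding a new edge between $M^{(1)}$ and $M^{(2)}$ labeled by a fresh element $e$ (rename the occurrences of $p$ in $E(M^{(1)})$ and $E(M^{(2)})$ to this $e$), and leaving all other labels intact. One checks (1): the new edge label $e$ is fresh and hence not in $E(M)$, and the union of all vertex ground sets is now $E(N_1)\cup E(N_2)\cup\{e\}\cup(\text{old edge labels}) = E(M)\cup\{\text{all edge labels}\}$, since $E(M)=E(N_1)\cup E(N_2)-p$ and we have replaced the two copies of $p$ by $e$. Conditions (2) and (3) follow because inside each original tree they are inherited, and the only new adjacency is the $M^{(1)}$--$M^{(2)}$ edge, whose endpoints share exactly $\{e\}$ by construction, while any pair with one vertex in $T(N_1)$ and the other in $T(N_2)$ that is \emph{not} this new edge shares nothing, as the corresponding ground sets meet only possibly in $p$, which no longer appears.

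Finally, for the contraction claim: contract the edges of $T(M)$ in the order $e_1,\dots,e_{t-1}$ where we first contract all edges of $T(N_1)$, then all edges of $T(N_2)$, then the new edge $e$ last. By the induction hypothesis, contracting the edges of $T(N_1)$ collapses that subtree to a single vertex labeled $N_1$, and likewise $T(N_2)$ collapses to a vertex labeled $N_2$; contracting the final edge $e$ then forms the vertex labeled $N_1\oplus_2 N_2 = M$, as desired. (The order of contractions does not affect the final matroid, since $2$-sum over disjoint basepoints commutes — one may invoke Proposition~\ref{obs:2-sum-facts}(\ref{obs:2sum-part-3}), or simply observe that associativity of the construction lets us reorder — so the stated ``one by one in order'' process yields $M$ for the particular order just described, which suffices.) The main obstacle is the bookkeeping in the inductive step: one must be careful that the fresh edge labels introduced at each level stay disjoint from $E(M)$ and from each other (which can be arranged since only finitely many elements are ever needed), and that the $2$-sum decomposition $M=N_1\oplus_2 N_2$ used is a genuine nontrivial one — this is guaranteed by the definition of ``not $3$-connected'' for a connected matroid, but it is worth noting that small cases (e.g.\ $|E(M)|\le 3$) must be folded into the base case.
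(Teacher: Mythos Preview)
Your proof is correct and follows essentially the same approach as the paper's own proof in Appendix~\ref{app:oxley2006matroid-tree}: induction on $|E(M)|$, with the base case $M$ $3$-connected, and in the inductive step splitting $M=N_1\oplus_2 N_2$, applying the hypothesis to each connected summand, and joining the two trees by a new edge at the unique vertex-matroids containing the common element. The only cosmetic difference is that the paper uses the common element $p$ itself (which already lies outside $E(M)$) as the new edge label, whereas you introduce a fresh name $e$ and rename $p$ to it; both amount to the same bookkeeping.
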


\begin{eexample}\label{exam2}
	Consider the matroid $M$ whose associated tree structure is given in Figure~\ref{fig}. The ground set of $M$ is $\{1,2,3,4,8,9,10,11,12,13,14,15\}$ and its rank, which can be computed as the sum of the ranks of the nodes minus the number of edges, is 4. $\{1,2,11,13\}$ is a basis.
\end{eexample}

\begin{figure}[h]
	\begin{center}
		\includegraphics[scale=0.7]{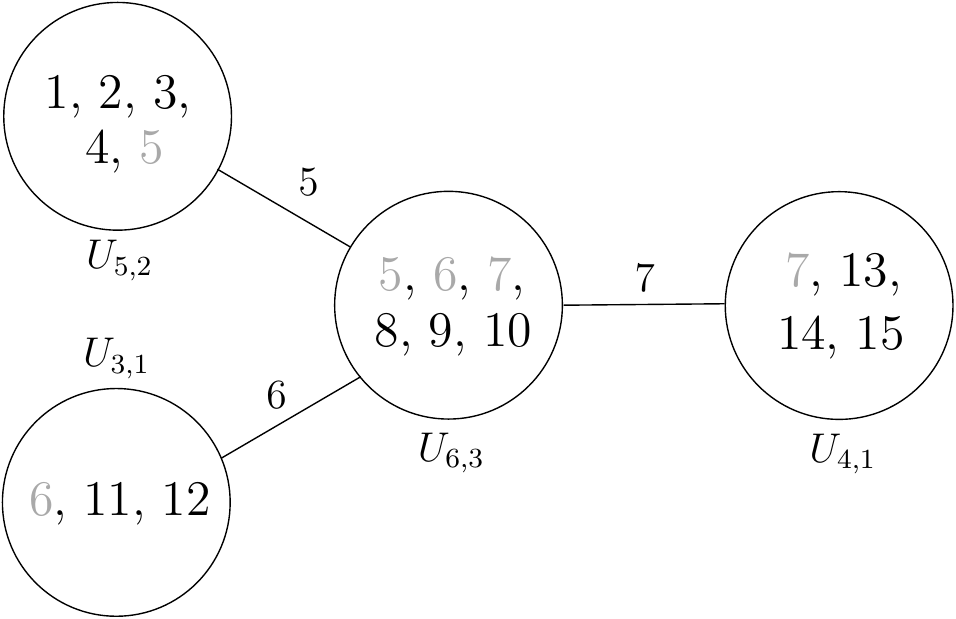}
	\end{center}
	\caption{The matroid from Example~\ref{exam2}.}
	\label{fig}
\end{figure}

For a connected matroid $M(E,{\cal B}) \in\M$, Theorem~\ref{thm:oxley2006matroid-tree} reveals a tree structure $T(M)$, where every node represents a 3-connected uniform matroid, and every edge represents a 2-sum operation. We now give a simple description of the associated base polytope. Let $a$ be an edge of $T(M)$. The removal of $a$ breaks $T$ into two connected components ${C}^1_a$ and ${C}^2_a$. Let $E^1_a$ (resp. $E^2_a$) be the set of elements from $E$ that belong to uniform matroids from ${C}^1_a$ (resp. ${C}^2_a$).  The following theorem shows that the inequalities needed to describe $B(M)$ are the ``trivial" inequalities $0\leq x\leq 1$, plus $x(F)\leq r(F)$, where $F=E^1_a$ or $E^2_a$ for some edge $a$ of $T(M)$. If $M$ is 2-sum of uniform matroids $U_1,\dots, U_t$, then clearly $T$ will have $t-1$ edges. From Proposition~\ref{obs:uniform(mod)}, we know that  $E(U_i)\geq 3$ for any $i$. Hence, if $|E|=n$, we have $$n=\sum_{i=1}^t |E(U_i)|-2(t-1)\geq 3t-2(t-1)=t+2,$$ hence $t\leq n-2$. Thus, the total number of inequalities needed is linear in the number of elements.


\begin{theorem}\label{thr:matroid}
	
	Let $M=(E,{\cal B}) \in {\cal M}$ be a connected matroid obtained as $2$-sum of uniform matroids $U_1=U_{n_1,k_1}, \dots, U_t=U_{n_t,k_t}$. Let $T(N,A)$ be the tree structure of $M$ according to Theorem~\ref{thm:oxley2006matroid-tree}. For each $a \in A$,  let ${C}^1_a$, ${C}^2_a$, $E_a^1, E_a^2$ be defined as above. 
	Then 
	$$\begin{array}{rllr} B(M) =  \{ x \in \R^E : x & \geq & 0 \\ [1.02ex]
	 x & \leq & 1 \\[1.02ex] 
	 x(F) & \leq & \rk(F) & \hbox{ for } F=E^i_a \hbox{ for some } i\in \{1,2\} \hbox{ and } a \in A, \\ [1.02ex]
	x(E) & = & \rk(E) & \}.\end{array}$$
	
	\noindent Moreover, if $F=E_a^i$ for some $i\in\{1,2\}$, $a \in A$, then $\rk(F) = 1 - |C_a^{i}| + \sum_{ j : U_j \in {\cal C}_a^i} k_j$.
\end{theorem}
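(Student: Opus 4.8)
The plan is to prove the theorem by induction on the number $t$ of uniform matroids in the $2$-sum decomposition, using Lemma~\ref{obs:2sumpolydescription} as the main engine. The base case $t=1$ is exactly the fact that $B(U_{n,k})$ is the hypersimplex, for which the description reduces to $0\le x\le 1$ together with $x(E)=k=\rk(E)$; here $T$ has no edges so there are no ``$x(F)\le\rk(F)$'' inequalities, and indeed the description in \eqref{eq:base} collapses to this, since for a uniform matroid every nontrivial rank inequality $x(F)\le r(F)=\min(|F|,k)$ is implied by $0\le x\le 1$ and $x(E)=k$.

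For the inductive step, write $M=M_1\oplus_2 U_t$, where $M_1$ is the $2$-sum of $U_1,\dots,U_{t-1}$ (this is possible by choosing a leaf of $T(M)$; the tree $T(M_1)$ is then $T(M)$ with that leaf and its incident edge $a^\ast$ removed, and the removed edge corresponds to the common element $p$). By Lemma~\ref{obs:2sumpolydescription}, $B(M)$ is linearly isomorphic to $B(M_1)\times B(U_t)\cap\{x_{p_1}+x_{p_2}=1\}$. I would apply the induction hypothesis to express $B(M_1)$ via its ``trivial'' inequalities, the cut inequalities $x(F)\le\rk_{M_1}(F)$ over edges of $T(M_1)$, and $x(E(M_1))=\rk_{M_1}(E(M_1))$ — keeping in mind that here $E(M_1)$ still contains the coordinate $p_1$. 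Combine with the hypersimplex description of $B(U_t)$ and the gluing equation $x_{p_1}+x_{p_2}=1$. The key bookkeeping step is then to translate each surviving inequality back into the language of $T(M)$: a cut $F$ of $T(M_1)$ either does not contain $p_1$, in which case it is already a cut $E_a^i$ of $T(M)$ for the same edge $a$; or it contains $p_1$, in which case (using $x_{p_1}=1-x_{p_2}$ and the hypersimplex constraint $x(E(U_t))=k_t$ together with $x_{p_2}\ge 0$, $x_{p_2}\le 1$) it rewrites as a cut of $T(M)$ for the corresponding edge with $p_1$ replaced by $E(U_t)-p$; likewise $x(E(M_1))=\rk_{M_1}(E(M_1))$ combines with $x(E(U_t))=k_t$ and $x_{p_1}+x_{p_2}=1$ to give $x(E)=\rk(E)$. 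One must also check, as in the proof of Proposition~\ref{lem:facet-defining}, that the coordinate inequalities on $x_{p_1},x_{p_2}$ are redundant after gluing, and more importantly that the cut inequalities arising from the hypersimplex $B(U_t)$ itself are redundant — every nontrivial subset inequality inside a uniform matroid is implied by $0\le x\le 1$ and the total constraint — so that no extra inequalities beyond the claimed ones are needed.

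For the rank formula at the end, I would argue by induction on $t$ as well, or directly: the submatroid $M|E_a^i$ is itself obtained as the $2$-sum of exactly the uniform matroids $U_j$ with $U_j\in\mathcal C_a^i$ (after deleting the elements $e_\ell\notin E_a^i$ from the tree $\mathcal C_a^i$, which are the $|C_a^i|-1$ edges internal to that component), so its rank equals the sum of the ranks $k_j$ of its constituents minus the number of edges of the subtree $\mathcal C_a^i$, which is $|C_a^i|-1$; this gives $\rk(F)=\sum_{j:U_j\in\mathcal C_a^i}k_j - (|C_a^i|-1) = 1-|C_a^i|+\sum_{j:U_j\in\mathcal C_a^i}k_j$. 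This additivity-of-rank-minus-edges fact for $2$-sums is the same identity already used in Example~\ref{exam2} and follows from Proposition~\ref{obs:2-sum-facts}\ref{obs:2sum-part-2} by a routine counting argument, or can itself be proved by induction on the number of $2$-sums.

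\textbf{Main obstacle.} The hard part is the careful translation of cut inequalities and equations across the gluing operation $x_{p_1}+x_{p_2}=1$: one must show that after substitution the inequalities of $B(M_1)$ involving $p_1$ become \emph{exactly} the cut inequalities $E_a^i$ of $T(M)$ (with the right-hand side matching the rank formula), and simultaneously verify that no redundant inequalities survive and none are missing — in particular handling the fact that a single edge $a$ of $T(M)$ gives two cuts $E_a^1, E_a^2$ whose inequalities are related by the total equation $x(E)=\rk(E)$, so only the two families together (not one) are what remains. Getting the redundancy argument airtight — that the exponentially many rank inequalities of \eqref{eq:base} are all implied by this linear-size subfamily — is where the real content lies; the rest is bookkeeping on trees.
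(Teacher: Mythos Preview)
Your proposal is correct and follows essentially the same approach as the paper: induction on $t$, peeling off a leaf $U_t$ of the tree, applying Lemma~\ref{obs:2sumpolydescription} to write $B(M)$ as $B(M_1)\times B(U_t)\cap\{x_{p_1}+x_{p_2}=1\}$, invoking the inductive description of $B(M_1)$, and then translating each inequality across the gluing equation. The paper phrases the translation step face-by-face (showing $\varphi(\calF_{a,i})$ coincides with the face of $B(M)$ defined by the corresponding cut inequality), whereas you phrase it as rewriting inequalities via the substitution $x_{p_1}=1-x_{p_2}$, but these are the same computation; your separate direct argument for the rank formula (rank of a $2$-sum is the sum of ranks minus the number of gluing points) is also the one the paper uses implicitly.
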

\begin{proof}
	Let $M=U_1\oplus_2\dots \oplus_2 U_t$ and $T(N,A)$ be as in the hypotheses. We proceed by induction on $t$. If $t=1$, there is nothing to prove as $M$ is uniform and $A=\emptyset$. 
	Let $t>1$, and assume without loss of generality that the node $U_t$ of $T$ is a leaf, or in other words that $E(U_t)\cap \cup_{i=1}^{t-1}E(U_i)$ consists of only one element, which we denote by $p$. Then we can write $M$ as a 2-sum of $M_1=U_1\oplus_2\dots\oplus_2 U_{t-1}$, with ground set $E_1$, and $M_2=U_t$, with ground set $E_2$, with $E_1\cup E_2-p=E$. 
	From Proposition~\ref{obs:2-sum-facts}, part 1, we have that $M_1\in \cal M$ is connected, hence it satisfies the induction hypothesis with tree structure $T_1$, the subtree of $T$ induced by nodes $U_1,\dots, U_{t-1}$. Let $A_1$ be the edge set of $T_1$. For any edge $a\in A_1$, $T_1-a$ has connected components $\tilde{C}_a^1=C_a^1, \tilde{C}_a^2= C_a^2-U_t$, and $\tilde E_a^1,\tilde E_a^2$ are defined accordingly. Using Lemma~\ref{obs:2sumpolydescription}, we have that $B(M)$ is isomorphic to
	$$
	Q=B(M_1)\times B(U)\cap \{y\in\R^{E_1\uplus E_2}: y_{p_1}+y_{p_2}=1\},
	$$
	where $E_1\uplus E_2=E_1\cup E_2\cup\{p_1,p_2\}-p$ as in Lemma~\ref{obs:2sumpolydescription}. We use $y$ for variables in $\R^{E_1\uplus E_2}$ and $x$ for variables in $\R^E$ to avoid confusion. 
	From the induction hypothesis, $Q$ and  can be described as follows:
	$$\begin{array}{lrlllr} Q =  \{ y \in \R^{E_1\uplus E_2} : && 0\leq y_e\leq 1 &  \hbox{ for } e\in E_1 \\ [1.02ex]
	&& y(F)  \leq \rk(F) & \hbox{ for } F=\tilde E^i_a, \ i\in\{1,2\}, \ a \in A_1, \\ [1.02ex]
	&& y(E_1)  = \rk(E_1)&  \\[1.02ex]
	&& 0\leq y_e\leq 1 &  \hbox{ for } e\in E_2-p_2 \\ [1.02ex]
	&&y(E_2)= \rk(E_2)&\\ [1.02ex]
	&& y_{p_1}+y_{p_2}=1 &&&\},\end{array}$$
	where we excluded $0\leq y_{p_2}\leq 1$ as it is implied by the system (see the proof of Proposition~\ref{lem:facet-defining}).  
	Let $\varphi$ be the projection from $\R^{E_1\uplus E_2}$ to $\R^{E}$, as in the proof of Lemma~\ref{obs:2sumpolydescription}. 
	We have that $\varphi$ is a bijection between $Q$ and $B(M)$, and it also induces a bijection between the faces of $Q$ and those of $B(M)$. 
	To complete the proof, we just need to show that, for any face $\calF$ of $Q$ corresponding to an inequality given above, the face $\varphi(\calF)$ of $B(M)$ is described by one of the inequalities given in the thesis. 
	
	First, for any $e\in E=E_1\uplus E_2\setminus\{p_1,p_2\}$, let $\calF_{e,0}=\{y\in\R^{E_1\uplus E_2}: y_e=0\}\cap Q$. 
	It is immediate to see that $\varphi(\calF_{e,0})=\{x\in \R^E: x_e=0\}\cap B(M)$. Similarly for the faces $\calF_{e,1}$ induced by $y_e=1, e\in E$. 
	Consider now $\calF_{p_1,0}$; we claim that $$\varphi(\calF_{p_1,0})=\{x\in \R^E: x(E_1-p)=\rk(E_1-p)\}\cap B(M).$$ 
	Indeed, $x\in \varphi(\calF_{p_1,0})$ if and only if $x=\varphi(y)$ with $y\in Q$ and $y_{p_1}=0$. 
	The last equation is equivalent to $y(E_1)=y(E_1-p_1)=x(E_1-p)$, i.e., $x(E_1-p)=\rk(E_1)=\rk(E_1-p)$, which holds since $M_1$ is connected. 
	In the same way we can see that $$\varphi(\calF_{p_1,1})=\{x\in \R^E: x(E_2-p)=\rk(E_2-p)\}\cap B(M).$$ 
	Let $U_\ell$ be the unique neighbor of $U_t$ in $T$. 
	The two inequalities just described correspond to $x(F) \leq \rk(F)$ for $F=E^i_{\bar{a}}$, where $\bar{a}$ is the edge between $U_\ell$ and $U_t$ in $T$, and $i=1,2$. 
	We now consider the inequalities corresponding to the other edges of $T$ (which are edges of $T_1$ as well).
	For any such edge $a$, let $\calF_{a,i}=\{y\in\R^{E_1\uplus E_2}: y(\tilde E^i_a)=\rk(\tilde E_a^i\}\cap Q$, for $i=1,2$. We claim that 
	$$ \varphi(\calF_{a,1})=\{x\in \R^E: x(E^1_a)=\rk(E_a^1)\}\cap B(M),$$
	$$\varphi(\calF_{a,2})=\{x\in \R^E: x(E^2_a)=\rk(E_a^2)\}\cap B(M).$$
	If, among $\tilde C_a^1, \tilde C_a^2$, the latter is such that $U_\ell\in \tilde C_a^2$, and $E_a^1,E_a^2,\tilde E_a^1,\tilde E_a^2$ are defined accordingly, then we have $E^1_a=\tilde E^1_a$ and the first equality is immediate. For the second equality, we argue similarly as before, exploiting the fact that any connected subtree of $T$ gives a connected matroid that is 2-sum of its nodes. Let $M_a$, $\tilde M_a$, be obtained as 2-sums of the matroids in $C_a^2, \tilde C_a^2$ respectively. Then one has $M_a=\tilde M_a\oplus_2 M_2$, (notice that $E^2_a=\tilde E^2_a\cup E_2-p$), which implies $\rk(E_a^2)=\rk(\tilde E_a^2)+\rk(E_2)-1$. We have $x\in \varphi(\calF_{a,2})$ if and only if $x=\varphi(y)$ with $y\in Q$ and $y(\tilde E^2_a)=\rk(\tilde E^2_a)$. Now, if $y_{p_1}=0$, then $y_{p_2}=1$, and one has $x(\tilde E_a^2-p)=y(\tilde E^2_a)=\rk(\tilde E^2_a)$ and $x(E_2-p)=y(E_2)-1=\rk(E_2)-1$, which implies $x(E_a^2)=x(\tilde E_a^2-p)+x(E_2-p)=\rk(E_a^2)$. If $y_{p_1}=1$ and $y_{p_2}=0$, one has $x(\tilde E_a^2-p)=y(\tilde E^2_a)-1=\rk(\tilde E^2_a)-1$, and $x(E_2-p)=y(E_2)=\rk(E_2)$, which again implies $x(E_a^2)=\rk(E_a^2)$. The reverse implication, that $x(E_a^2)=\rk(E_a^2)$ implies $y(\tilde E^2_a)=\rk(\tilde E^2_a)$, can be shown in the same way, and this completes the proof.
\end{proof}

We conclude by remarking that, for any matroid $M$, the corresponding tree structure given in Theorem~\ref{thm:oxley2006matroid-tree} can be obtained in polynomial time, given an independence oracle for $M$, for instance using the shifting algorithm given in~\cite{bixby1996matroid}. This means that, given an independence oracle for $M\in \cal M$, one can efficiently write down the description of $B(M)$ given by Theorem~\ref{thr:matroid}: first, one obtains the tree structure and the corresponding uniform matroids, and then the rank inequalities corresponding to the edges of the tree. The latter part just takes linear time in the number of elements of $M$.


\section{Cut Polytope and Matroid Cycle Polytope}\label{sec:cycle-matroid} 

Given a graph $G$ with edge set $E$, its \emph{cut polytope} $CUT(G)\subseteq \mathbb{R}^E$ is the convex hull of the characteristic vectors of the cuts of $G$. For general graphs, a linear description of $CUT(G)$ is not known. However, for graphs without $K_5$ as a minor, $CUT(G)$ is described by:
\begin{equation}\label{eq:cut}
	CUT(G) =  \{ x \in [0,1]^E : x(F) - x(C\setminus F)\leq |F|-1 \; \; \forall F\in {\cal F}\}, 
\end{equation}
where ${\cal F}=\{ F\subset V(G): F\subset C, \; C \text{ induced cycle of } G, \; |F| \text{ odd} \}$.

For a matroid $M=(E,{\cal B})$, a set $C\subseteq E$ is a \emph{cycle} if $C=\emptyset$ or $C$ is a disjoint union of circuits. The \emph{cycle polytope} $C(M)$ of $M$ is the convex hull of the characteristic vectors of its cycles~\cite{barahona1986cycle}. Cycle polytopes can be seen as a generalization of cut polytopes. Indeed, it can be shown that if $M$ is cographic, i.e. it is the dual of the forest matroid of some graph $G$, then the cycles of $M$ correspond to the cuts of $G$, hence $C(M)=CUT(G)$. 
The \emph{cycle polytope} $C(M)$ is given by the convex hull of the characteristic vectors of its cycles, and it is a generalization of the cut polytope $CUT(G)$ for a graph $G$~\cite{barahona1986cycle}. 

A matroid is called binary if it can be represented over the finite field $GF_2$. Given a matroid $M$, we denote by $M^*$ its dual matroid. $M$ is binary if and only if $M^*$ is binary. 
An element $e$ of a matroid is a chord of a circuit $C$ if $C$ is the symmetric difference of two circuits whose intersection is $e$. A chordless circuit is a circuit with no chords and the same definition can be applied to cocircuits, that are circuits in the dual matroid. $F_7^*$ denotes the dual of the Fano matroid; $R_{10}$ is a binary matroid associated with the $5 \times 10$ matrix whose columns are the 10 $0/1$ vectors with 3 ones and 2 zeros;  $M_{K_5}^*$ is the dual of the forest matroid of $K_5$.  

In this section we prove Conjecture~\ref{main-conjecture} for the cycle polytope $C(M)$ of the binary matroids $M$ that have no minor isomorphic to $F_7^*$, $R_{10}$, $M_{K_5}^*$ and are $2$-level. When those minors are forbidden, a complete linear description of the associated polytope is known (see~\cite{barahona1986cycle}). This class includes all cut polytopes that are $2$-level, and has been characterized in~\cite{gouveia2012new}:

\begin{theorem} \label{thr:cycle-polytope-2-level}
	Let $M$ be a binary matroid with no minor isomorphic to $F_7^*$, $R_{10}$, $M_{K_5}^*$. Then $C(M)$ is 2-level if and only if $M$ has no chordless cocircuit of length at least 5.
\end{theorem}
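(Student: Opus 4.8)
The statement is a characterization of $2$-levelness of $C(M)$ in the forbidden-minor class, so the proof must go in both directions. The plan is to exploit the explicit linear description of $C(M)$ that is available precisely because $M$ has no $F_7^*$, $R_{10}$, $M_{K_5}^*$ minor (see~\cite{barahona1986cycle}): the facets of $C(M)$ are the box inequalities $0\le x_e\le 1$ together with the ``chordless cocircuit'' inequalities $x(F)-x(D\setminus F)\le |F|-1$, where $D$ ranges over the chordless cocircuits of $M$ and $F\subseteq D$ has odd cardinality. (This is the matroidal analogue of~\eqref{eq:cut}; a cocircuit of $M$ is a circuit of $M^*$, and for $M$ cographic these are exactly the induced cycles giving the cuts of the associated graph.) Once we have this description, $2$-levelness becomes a statement about where the vertices (the cycles of $M$) sit relative to each facet.

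\textbf{Easy direction (no long chordless cocircuit $\Rightarrow$ $2$-level).} Assume every chordless cocircuit $D$ of $M$ has $|D|\le 4$. The box facets $x_e=0,1$ are always $2$-level for a $0/1$-polytope, so the only thing to check is a cocircuit facet $x(F)-x(D\setminus F)= |F|-1$ with $F\subseteq D$ odd. Since $|D|\le 4$ and $|F|$ is odd and positive, the only possibilities are $|D|=1$ with $|F|=1$, $|D|=2$ or $3$ with $|F|=1$, $|D|=3$ with $|F|=3$, and $|D|=4$ with $|F|=1$ or $3$. In each case the inequality $x(F)-x(D\setminus F)\le |F|-1$ is supported on at most $4$ coordinates, and its left-hand side, evaluated at a $0/1$ point that is the characteristic vector of a cycle $C$, takes integer values in a range whose width is the full span $\{|F|-1,\,|F|\}$ only after using the combinatorial fact that a cycle intersects a cocircuit in an \emph{even} number of elements (this is the fundamental orthogonality of circuits and cocircuits in a binary matroid). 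Indeed $|C\cap D|$ even forces $x(F)-x(D\setminus F)\equiv |F|\pmod 2$ on the relevant parity classes, so the expression can only equal $|F|-1$ or $|F|-3$ etc.; checking the small cases $|D|\le 4$ by hand shows that it attains exactly two consecutive-in-value levels, which is precisely $2$-levelness of that facet. I would present this as a short finite case analysis driven by the parity of $|C\cap D|$.

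\textbf{Hard direction (a long chordless cocircuit $\Rightarrow$ not $2$-level).} Suppose $M$ has a chordless cocircuit $D$ with $|D|=k\ge 5$. Pick $F\subseteq D$ with $|F|$ odd; the associated inequality $x(F)-x(D\setminus F)\le |F|-1$ is facet-defining. I want to exhibit cycles $C_1,C_2$ of $M$ not meeting this facet whose characteristic vectors lie in \emph{different} translates of its affine hull, i.e. with $x(F)-x(D\setminus F)$ taking two different values strictly below $|F|-1$. Concretely, taking $F=\{f_1,f_2,f_3\}$ (or a $3$-subset) of $D$, one wants a cycle $C_1$ with $|C_1\cap F|=1$, $C_1\cap(D\setminus F)=\emptyset$ giving value $1=|F|-2$, and a cycle $C_2$ with $C_2\cap D=\emptyset$ giving value $0=|F|-3$; two distinct values, both $<|F|-1=2$, which destroys $2$-levelness. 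The existence of such cycles is where ``chordless'' and ``$k\ge 5$'' are used: because $D$ is a cocircuit, for each proper subset $S\subsetneq D$ of even size there is a circuit $C$ with $C\cap D=S$ (again by binary orthogonality plus the fact that $D$, being a cocircuit, is a minimal set whose deletion drops the corank — so its ``dual spread'' is rich), and because $D$ is \emph{chordless} these small-intersection circuits cannot be forced to also be large on $D$. I would make this precise by working in the dual: a chordless cocircuit of $M$ is a chordless circuit $D$ of $M^*$, and in a binary matroid with a chordless circuit of length $k$ one can, by restricting/contracting, reduce to a situation where $M^*|D'$ for suitable $D'$ behaves like a cycle matroid of a polygon $C_k$, so that the cocircuit inequality is literally the odd-cut inequality of an induced $k$-gon, which is known to be non-$2$-level for $k\ge 5$ (a $k$-gon's cut polytope is $2$-level iff $k\le 4$, essentially because $CUT(C_k)$ for $k\ge 5$ already has a vertex at each of the values $k-1,k-3$ on that facet). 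Transferring the two witness cuts of the $k$-gon back to cycles of $M$ via the minor operations gives the required $C_1,C_2$.

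\textbf{Main obstacle.} The delicate point is the hard direction: turning ``$D$ is a chordless cocircuit of length $\ge 5$'' into an honest local model (a $C_k$-minor situation, or directly two witness cycles) while staying inside the forbidden-minor class so that the linear description remains valid. One must argue that the minor used to produce the model is still $2$-level-description-friendly, i.e. does not accidentally reintroduce an $F_7^*$, $R_{10}$ or $M_{K_5}^*$ minor — but since minors of our $M$ automatically avoid these, this is safe, and $2$-levelness is inherited by faces (Lemma~\ref{lem:2-level-2^d}), so a non-$2$-level face or minor of $C(M)$ would already contradict $2$-levelness of $C(M)$. I expect the cleanest writeup routes everything through $M^*$ and the chordless circuit $D$, uses the known non-$2$-levelness of $CUT(C_k)$, $k\ge 5$, as the combinatorial seed, and cites~\cite{barahona1986cycle} for the facet description and the compatibility of cycle polytopes with minors.
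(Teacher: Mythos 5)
First, a remark on scope: the paper does not actually prove Theorem~\ref{thr:cycle-polytope-2-level} --- it is quoted from~\cite{gouveia2012new} and used as a black box --- so there is no internal proof to compare yours against, and I am assessing your proposal on its own merits. Your easy direction is essentially right: under the forbidden-minor hypothesis, Theorem~\ref{thr:barahona1986cycle} gives the complete description, so every facet comes from a box inequality or a chordless-cocircuit inequality, and since a cycle of a binary matroid meets every cocircuit $D$ in an even number of elements, the quantity $x(F)-x(D\setminus F)$ is always of the same parity as $|F|-1$ and at most $|F|-1$ on vertices; for $|D|\le 4$ the finite check confirms that at most two values occur, which is exactly $2$-levelness of those facets.

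The hard direction, however, contains a concrete error and two unproved claims. The error: your witness cycle $C_1$ with $|C_1\cap F|=1$ and $C_1\cap(D\setminus F)=\emptyset$ meets the cocircuit $D$ in an \emph{odd} number of elements, which is impossible --- this contradicts the very orthogonality you invoke in the easy direction, so the value $|F|-2$ is never attained. To break $2$-levelness you must realize, besides the facet value $|F|-1$, two values among $|F|-3,|F|-5,\dots$; e.g.\ for $|F|=3$ the values $2$, $0$ and $-2$, where $-2$ needs a cycle meeting $D\setminus F$ in exactly two elements and missing $F$ --- and this is precisely where $|D|\ge 5$ (i.e.\ $|D\setminus F|\ge 2$) enters. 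The gaps: (i) you assert, but do not prove, that the cocircuit inequality is facet-defining; a valid non-facet inequality taking three values on vertices does not contradict $2$-levelness, so this is essential, and it is in fact where chordlessness is used (for a chorded cocircuit the inequality is implied by others), not --- as you suggest --- in the construction of the witness cycles; you need to cite or reprove the facet characterization of~\cite{barahona1986cycle}. (ii) The realizability of prescribed even intersections with $D$ needs an honest argument, and ``the dual spread is rich'' is not one. A clean route: since $E\setminus D$ is a hyperplane, hence a flat, for any $e,f\in D$ there is a circuit $C\subseteq (E\setminus D)+e+f$ containing $f$, and it must contain $e$ (else $f$ would lie in the closure of $E\setminus D$), so $C\cap D=\{e,f\}$; then the $GF(2)$ vector-space structure of the cycle space (Lemma~\ref{lem:cycle-space-binary}) and the identity $(C_1\triangle C_2)\cap D=(C_1\cap D)\triangle(C_2\cap D)$ realize every even subset of $D$. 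With that in hand, your final detour through a $C_k$-minor and $CUT(C_k)$ is unnecessary, and also shaky as written, since contraction minors of $M$ correspond to \emph{projections} of $C(M)$, and $2$-levelness is inherited by faces but not by projections.
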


\begin{corollary}
	The polytope $CUT(G)$ is 2-level if and only if $G$ has no minor isomorphic to $K_5$ and no induced cycle of length at least 5.
\end{corollary}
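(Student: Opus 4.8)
The plan is to apply Theorem~\ref{thr:cycle-polytope-2-level} to the cographic matroid $M=M_G^*$, the dual of the graphic (forest) matroid $M_G$ of $G$, using the fact recalled above that $C(M_G^*)=CUT(G)$. It then suffices to show that, for $M=M_G^*$, the three excluded-minor hypotheses and the ``no long chordless cocircuit'' conclusion of Theorem~\ref{thr:cycle-polytope-2-level} translate exactly into the graph-theoretic statements in the claim.

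First I would dispose of the three excluded minors. Since a minor of a cographic matroid is again cographic, and since $F_7^*$ is cographic only if $F_7$ were graphic (which it is not), while $R_{10}$ is known to be neither graphic nor cographic, the matroid $M_G^*$ can never have $F_7^*$ or $R_{10}$ as a minor; these two hypotheses are therefore vacuously satisfied for every graph $G$. For the third minor I would use that taking a minor commutes with duality, i.e. $N$ is a minor of $M$ if and only if $N^*$ is a minor of $M^*$; hence $M_{K_5}^*$ is a minor of $M_G^*$ if and only if $M_{K_5}$ is a minor of $M_G$. The latter holds if and only if $G$ has $K_5$ as a minor: one direction is immediate since deletions and contractions in $G$ correspond to deletions and contractions in $M_G$, and the converse follows because $K_5$ is $3$-connected, so any graph whose cycle matroid equals $M_{K_5}$ is isomorphic to $K_5$ (Whitney's $2$-isomorphism theorem).

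Next I would translate the chordless-cocircuit condition. The cocircuits of $M=M_G^*$ are exactly the circuits of $M^*=M_G$, i.e. the edge sets of cycles of $G$, and the length of such a cocircuit is the number of edges of the corresponding cycle. A chord of such a circuit in the matroid $M_G$ (an element $e$ for which the circuit is the symmetric difference of two circuits meeting precisely in $e$) is exactly a chord of the cycle in the usual graph sense (an edge of $G$ joining two non-consecutive vertices of the cycle, splitting it into two shorter cycles sharing $e$). Consequently a chordless cocircuit of $M$ corresponds to an induced (chordless) cycle of $G$, and ``$M_G^*$ has no chordless cocircuit of length at least $5$'' is equivalent to ``$G$ has no induced cycle of length at least $5$.'' Combining this with the previous paragraph and Theorem~\ref{thr:cycle-polytope-2-level} yields the stated equivalence.

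The main obstacle I expect is not depth but careful bookkeeping with matroid duality: verifying that the excluded minors $F_7^*$ and $R_{10}$ are genuinely vacuous for cographic matroids (which rests on the known facts that neither is cographic), and checking that the chord/chordless dictionary between circuits of $M_G$ and cycles of $G$ matches the classical notion of induced cycle, so that both implications in the ``if and only if'' go through cleanly.
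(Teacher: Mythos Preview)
Your dictionary is correct and matches the intended route (the paper itself gives no proof of the corollary): cographic matroids never have $F_7^*$ or $R_{10}$ as minors; $M_{K_5}^*$ is a minor of $M_G^*$ exactly when $K_5$ is a minor of $G$; and chordless cocircuits of $M_G^*$ are precisely the induced cycles of $G$.

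There is, however, a gap in the logical structure. Theorem~\ref{thr:cycle-polytope-2-level} carries the excluded-minor condition as a \emph{hypothesis}, not as part of the biconditional. Hence from it you obtain only the conditional statement: if $G$ has no $K_5$ minor, then $CUT(G)$ is $2$-level if and only if $G$ has no induced cycle of length $\geq 5$. To get the full corollary you must separately establish that if $G$ \emph{does} have a $K_5$ minor then $CUT(G)$ is not $2$-level, and the theorem as stated says nothing about this case. The missing implication is genuine, not vacuous --- $K_5$ itself has no induced cycle of length $\geq 5$, yet $CUT(K_5)$ is not $2$-level. You therefore need an extra argument (for instance, that contracting an edge $e$ produces the face $CUT(G)\cap\{x_e=0\}$, so that after contracting branch sets and handling loops and parallel classes one exhibits $CUT(K_5)$ as a face of $CUT(G)$ up to affine isomorphism, and then invoke Lemma~\ref{lem:2-level-2^d}(2)), or you can simply cite the cut-polytope characterization directly from~\cite{gouveia2012new}, which appears to be what the paper intends.
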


Recall that the cycle space of graph $G$ is the set of its Eulerian subgraphs  (subgraphs where all vertices have even degree), 
and it is known (see for instance~\cite{gross2005graph}) to have a vector space structure over the field $\mathbb{Z}_2$. 
This statement and one of its proofs easily generalizes to the cycle space (the set of all cycles) of binary matroids. 
We provide a proof in Appendix~\ref{app:cycle-space-binary} for completeness.
\begin{lemma}\label{lem:cycle-space-binary}
	Let $M$ be a binary matroid with $d$ elements and rank $r$. Then the cycles of $M$ form a vector space $\cal C$ over $\mathbb{Z}_2$ with the operation of symmetric difference as sum. Moreover, $\cal C$ has dimension $d-r$.
\end{lemma}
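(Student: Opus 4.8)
The plan is to show that the cycles of a binary matroid $M$ form a subspace of $\mathbb{Z}_2^E$ (viewing subsets of $E$ as $0/1$-vectors, with symmetric difference as addition), and then compute its dimension by identifying it as the orthogonal complement (the null space) of a representing matrix. First I would recall that a binary matroid can be represented by a matrix $D$ over $GF_2$ whose columns are indexed by $E$, so that a subset $S\subseteq E$ is independent iff the corresponding columns of $D$ are linearly independent; one may take $D$ to be an $r\times d$ matrix of full row rank $r$. The key structural fact I would invoke is the characterization of circuits of a binary matroid: a nonempty set $C\subseteq E$ is a disjoint union of circuits (i.e.\ a cycle) of $M$ if and only if its characteristic vector $\chi^C$ lies in the null space of $D$, i.e.\ $D\,\chi^C = 0$ over $GF_2$. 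This is the standard fact that the cycle space of a binary matroid equals $\ker D$; equivalently, cycles are exactly the sets meeting every cocircuit in an even number of elements.

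Granting that characterization, the proof becomes short. The set $\{x\in \mathbb{Z}_2^E : Dx = 0\}$ is visibly a linear subspace of $\mathbb{Z}_2^E$, and symmetric difference of sets corresponds exactly to addition of characteristic vectors modulo $2$; hence $\cal C$ is a vector space over $\mathbb{Z}_2$ under symmetric difference, with $\emptyset$ as the zero element. For the dimension, since $D$ has $d$ columns and rank $r$ (full row rank, as $M$ has rank $r$), the rank–nullity theorem over the field $GF_2$ gives $\dim \ker D = d - r$, which is the claimed dimension of $\cal C$.

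The step I expect to require the most care is justifying the claim that cycles of $M$ are precisely the elements of $\ker D$. One direction is easy: a circuit is a minimal dependent set, so its columns sum to zero (a minimal linear dependence over $GF_2$ has all coefficients equal to $1$), hence its characteristic vector is in $\ker D$; and a disjoint union of circuits, being a sum of such vectors with disjoint supports, also lies in $\ker D$. The converse — that every nonzero element of $\ker D$ is a disjoint union of circuits — is the part that needs the argument: given $x\in\ker D$ nonzero, its support $S$ is dependent, so it contains a circuit $C_1$; then $\chi^{S}-\chi^{C_1}=\chi^{S\triangle C_1}$ is again in $\ker D$ with strictly smaller support, and one iterates. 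This shows $S$ is an edge-disjoint union of circuits, as required. I would present this inductive peeling argument carefully (over $GF_2$, minimal dependences have all coefficients $1$, which is what makes circuits correspond to support-minimal null vectors), and I would note that in the graphic case this recovers the familiar statement that the cycle space of a graph with $d$ edges, $r$ rank (so $c$ components, $r = |V|-c$) has dimension $d - r$, the circuit rank. Everything else is routine, and the appendix proof referenced in the paper presumably follows exactly this outline.
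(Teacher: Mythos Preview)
Your argument is correct, but it takes a genuinely different route from the paper's proof. You work linear-algebraically: pick a full-row-rank $GF_2$-representation $D$, identify the cycle space with $\ker D$ (via the peeling argument you sketch), and read off the dimension from rank--nullity. The paper instead argues combinatorially: it invokes Oxley's characterization (a matroid is binary iff the symmetric difference of any family of circuits is a disjoint union of circuits) to get closure under symmetric difference directly, and then exhibits an explicit basis of $\cal C$ by fixing a matroid basis $B$ and taking the fundamental circuits $C_e\subseteq B+e$ for each $e\in E\setminus B$; independence and spanning of these $d-r$ circuits are checked by hand. Your approach is cleaner for the dimension count (rank--nullity does the work), while the paper's approach yields a concrete basis and avoids choosing a representing matrix. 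Your closing remark that ``the appendix proof \dots\ presumably follows exactly this outline'' is incorrect---it does not use a representing matrix at all.
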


\begin{corollary} \label{cor:cycles-bound}
	Let $M$ be a binary matroid with $d$ elements and rank $r$. Then $M$ has exactly $2^{d-r}$ cycles.
\end{corollary}

The only missing ingredient is a description of the facets of the cycle polytope for the class of our interest, which extends the description of the cut polytope given in~\ref{eq:cut}.

\begin{theorem}\cite{barahona1986cycle} \label{thr:barahona1986cycle}
	Let $M$ be a binary matroid, and let ${\overline{ \cal C}}$ be its family of chordless cocircuits. Then $M$ has no minor isomorphic to $F_7^*$, $R_{10}$, $M_{K_5}^*$ if and only if
	$$C(M) =  \{ x \in [0,1]^E : x(F) -x(C\setminus F) \leq |F|-1 \mbox{ for } C \in {\overline{ \cal C}}, F\subseteq C, |F| \mbox{ odd}\}.$$
\end{theorem}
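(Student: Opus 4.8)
Write $\mathcal{P}(M)$ for the polyhedron on the right-hand side of the stated identity. The inclusion $C(M)\subseteq\mathcal{P}(M)$ holds for \emph{every} binary matroid, by a short parity argument: a cycle $Z$ and a cocircuit $C$ meet in an even number of elements, so for odd $F\subseteq C$ one has $\chi^Z(F)-\chi^Z(C\setminus F)=2|Z\cap F|-|Z\cap C|\leq|Z\cap F|\leq|F|$, and the value $|F|$ is excluded because $|Z\cap F|=|F|$ would force $|Z\cap C|\equiv|F|\equiv 1\pmod{2}$, hence $|Z\cap C|\geq|F|+1$. So the entire content of the theorem is the reverse inclusion $\mathcal{P}(M)\subseteq C(M)$, which is to hold exactly when none of $F_7^*$, $R_{10}$, $M_{K_5}^*$ is a minor of $M$; I would prove the two implications separately.

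\smallskip
\noindent\emph{Necessity of excluding the minors.} The plan is to show that the identity $C(M)=\mathcal{P}(M)$ is inherited by minors and then to rule out the three bad matroids by a finite check. Deletion and contraction act transparently on the cycle polytope: $C(M\setminus e)=C(M)\cap\{x_e=0\}$, since the cycles of $M\setminus e$ are exactly the cycles of $M$ avoiding $e$; and $C(M/e)=\pi_e(C(M))$, where $\pi_e$ forgets the coordinate $e$, since the cycles of $M/e$ are the sets $Z\setminus e$ with $Z$ a cycle of $M$. One checks that $\mathcal{P}$ transforms compatibly, by tracking how the chordless cocircuits of $M\setminus e$ and $M/e$ are produced from those of $M$ (this passes through ``cocircuit $=$ circuit of the dual'' and is the point needing care). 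It follows that a minor $N$ of $M$ with $C(N)\subsetneq\mathcal{P}(N)$ forces $C(M)\subsetneq\mathcal{P}(M)$, so it suffices to verify directly that $C(F_7^*)$, $C(R_{10})$ and $C(M_{K_5}^*)$ are each strictly contained in their $\mathcal{P}$. For $M_{K_5}^*$ this is the classical fact that $CUT(K_5)$ has a pentagonal (hypermetric) facet that is not an odd-cocircuit inequality; for $F_7^*$ and $R_{10}$ it is a small explicit computation.

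\smallskip
\noindent\emph{Sufficiency.} The plan is a decomposition argument in the spirit of Seymour~\cite{seymour1980decomposition}. Binary matroids with no $F_7^*$, $R_{10}$, $M_{K_5}^*$ minor can be built, using $1$-, $2$- and $3$-sums, from ``basic'' matroids --- graphic matroids, cographic matroids of graphs with no $K_5$ minor, and $F_7$ --- for each of which the identity $C(M)=\mathcal{P}(M)$ is checked directly; for the cographic pieces this is exactly the description of $CUT(G)$ recalled in \eqref{eq:cut}, once odd induced cycles of $G$ are matched with odd chordless cocircuits of the cographic matroid. One then shows that the identity is preserved under sums of order at most $3$: the cycles of a $k$-sum are obtained by gluing cycles of the two summands along the common elements and the chordless cocircuits behave likewise, so an extreme point of $\mathcal{P}(M_1\oplus_k M_2)$ restricts to (fractional) points of $\mathcal{P}(M_1)$ and $\mathcal{P}(M_2)$ which, by induction, are convex combinations of cycles, and these recombine into a convex combination of cycles of $M_1\oplus_k M_2$.

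\smallskip
\noindent\textbf{Main obstacle.} The hard step is the last one: proving that ``$C(M)=\mathcal{P}(M)$'' survives a $3$-sum. This is precisely where excluding $M_{K_5}^*$ is indispensable --- otherwise a $3$-sum glued along a triangle can manufacture a $K_5$-type obstruction, hence a facet of $C(M)$ that is no odd-cocircuit inequality --- and carrying out the gluing and recombination of fractional cycle decompositions across a three-element separator is the real technical heart (for cut polytopes it is the content of the composition-of-cuts results underlying \eqref{eq:cut}, here lifted to binary matroids). Identifying the exact decomposition theorem matching this excluded-minor list, and dispatching the non-regular base case $F_7$, are the remaining delicate points.
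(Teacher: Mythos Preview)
The paper does not prove this theorem. Theorem~\ref{thr:barahona1986cycle} is quoted verbatim from Barahona--Gr\"otschel~\cite{barahona1986cycle} and used as a black box in the proof of Lemma~\ref{thr:cycle-polytope-conjecture}; no argument for it appears anywhere in the text. So there is nothing to compare your proposal against in \emph{this} paper.

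That said, your sketch is a fair high-level outline of how the result is actually proved in the original source: validity of the cocircuit inequalities by the even-intersection property of cycles and cocircuits in binary matroids; necessity of the excluded minors by exhibiting, for each of $F_7^*$, $R_{10}$, $M_{K_5}^*$, a facet of the cycle polytope not of cocircuit type (and arguing that such a facet survives in any matroid having it as a minor); and sufficiency via a Seymour-style decomposition into basic pieces for which the description is known, together with preservation under $1$-, $2$-, and $3$-sums. You correctly flag the $3$-sum step as the technical heart. One caution: your minor-closure paragraph is a bit glib---the claim that $\mathcal{P}$ ``transforms compatibly'' under deletion and contraction is exactly where the work hides, since chordless cocircuits do not behave as cleanly under minors as you suggest, and the usual route is rather to show that the non-cocircuit facets themselves lift along minors. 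But none of this is relevant to the present paper, which simply cites the result.
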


\begin{lemma}\label{thr:cycle-polytope-conjecture}
	Let $M$ be a binary matroid with no minor isomorphic to $F_7^*$, $R_{10}$, $M_{K_5}^*$ and such that $C(M)$ is $2$-level. Then $C(M)$ satisfies Conjecture~\ref{main-conjecture}.
\end{lemma}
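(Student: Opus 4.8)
The strategy is to count vertices and facets of $C(M)$ separately and plug into the conjectured bound. For the vertices, Corollary~\ref{cor:cycles-bound} gives $f_0(C(M))=2^{d-r}$ exactly, where $d=|E|$ is the dimension of $C(M)$ (note $C(M)$ is full-dimensional in $\R^E$ since $\emptyset$ and all singletons... actually one must check dimension; the cycle space has dimension $d-r$ but the polytope $C(M)$ lives in $\R^E$ and, by the description in Theorem~\ref{thr:barahona1986cycle}, is contained in $[0,1]^E$ with no implicit equalities when $M$ has no coloops, so its dimension is $d$). For the facets, I would use the description from Theorem~\ref{thr:barahona1986cycle}: the facets are among the $2d$ box inequalities $0\le x_e\le 1$ together with the inequalities $x(F)-x(C\setminus F)\le |F|-1$ ranging over chordless cocircuits $C\in\overline{\cal C}$ and odd subsets $F\subseteq C$.

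The key step is to bound the number of these ``cocircuit inequalities''. For a fixed chordless cocircuit $C$ of length $\ell=|C|$, the number of odd subsets $F\subseteq C$ is exactly $2^{\ell-1}$. Since $C(M)$ is $2$-level, Theorem~\ref{thr:cycle-polytope-2-level} tells us every chordless cocircuit has length at most $4$, so $\ell\in\{1,2,3,4\}$ and each contributes at most $2^{\ell-1}\le 8$ inequalities. The crucial point is then to bound the number of chordless cocircuits themselves: I expect that, analogously to the binary-matroid setting where short cocircuits behave like a ``sparse'' structure, the chordless cocircuits of length at least... here one needs a counting argument. A natural route: relate the chordless cocircuits to a decomposition of $M$ (via Seymour-type structure for the class with no $F_7^*, R_{10}, M_{K_5}^*$ minor, this class is essentially graphic-plus-cographic pieces), so that $M$ decomposes and the cycle polytope becomes a Cartesian product / amalgam, letting us induct using Lemma~\ref{lem:cartesian-products-are-good}. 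Alternatively, and more cleanly: since each facet inequality of a $2$-level polytope is a $\{0,1\}$-slack inequality and $C(M)$ is affinely a $0/1$ polytope of dimension $d$, Lemma~\ref{lem:2-level-2^d} gives $f_{d-1}(C(M))\le 2^d$ for free; combined with $f_0(C(M))=2^{d-r}$ this gives $f_0 f_{d-1}\le 2^{2d-r}$, which beats $d2^{d+1}$ precisely when $2^{d-r}\le 2d$, i.e. when $r\ge d-\log_2(2d)$. So the hard case is small rank (equivalently, many cycles).

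To handle small $r$, I would argue directly: when $r$ is small the matroid is ``close to free'' and one can bound the number of chordless cocircuits combinatorially. In fact, with the $2$-levelness hypothesis the whole class is quite restricted, and I would look to reduce, via connectivity, to the case where $M$ is connected; then by a structure theorem (each connected such matroid is built by $1$-, $2$-, $3$-sums of graphic, cographic and small pieces, all of which have cocircuits of length $\le 4$) one can push through an induction on the number of elements exactly as in the proof of Lemma~\ref{thr:conj-matroid}: a non-connected $M$ gives a Cartesian product of cycle polytopes (handled by Lemma~\ref{lem:cartesian-products-are-good}), and the base/glue steps give linearly many new facets while at most doubling the vertex count, so the ratio $f_0 f_{d-1}/(d2^{d+1})$ stays below $1$ and equality forces cubes/cross-polytopes.

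\textbf{Main obstacle.} The real difficulty is the bookkeeping on the number of chordless cocircuits and of the odd-set inequalities they generate: unlike the box inequalities these are not obviously $O(d)$ in number without invoking either a structure theorem for the class (no $F_7^*, R_{10}, M_{K_5}^*$ minor) or the fact, forced by $2$-levelness, that all chordless cocircuits have length $\le 4$ and that distinct odd subsets of the same short cocircuit often yield the same facet or a non-facet; establishing a clean linear (or at least sub-$2^d$) bound on $f_{d-1}(C(M))$ and simultaneously tracking the extremal cases is where the argument will need the most care.
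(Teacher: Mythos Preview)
Your proposal correctly identifies the vertex count $f_0(C(M))=2^{d-r}$ and the facet-generating inequalities from Theorem~\ref{thr:barahona1986cycle}, and you are right that after the $2$-levelness hypothesis every chordless cocircuit has length $\le 4$, so each contributes at most $8$ odd-subset inequalities. But the ``main obstacle'' you name --- bounding the number of such cocircuits --- has a one-line resolution that you are missing, and your proposed workarounds (Seymour-type decomposition, induction as in Lemma~\ref{thr:conj-matroid}) are both much heavier than necessary and not obviously workable for cycle polytopes (the cycle polytope does not factor through $2$- and $3$-sums as cleanly as the base polytope does).

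The missing idea is to apply Corollary~\ref{cor:cycles-bound} a \emph{second} time, to the dual matroid. Cocircuits of $M$ are circuits of $M^*$, and circuits are in particular cycles; since $M^*$ is binary on $d$ elements with rank $d-r$, Corollary~\ref{cor:cycles-bound} gives that $M^*$ has exactly $2^r$ cycles, hence at most $2^r-1$ non-empty ones. Writing $T$ and $S$ for the number of cocircuits of length $3$ and $4$ respectively (after first reducing, as you should, to the case with no coloops and no cocircuits of length~$2$, so that $C(M)$ is genuinely full-dimensional), you get $T+S\le 2^r-1$. The facet count is then at most $2d+4T+8S$, and the target bound $2^{d-r}(2d+4T+8S)\le d\,2^{d+1}$ rewrites as $2T+4S\le d(2^r-1)$; for $d\ge 4$ this follows from $2T+4S\le 4(T+S)\le 4(2^r-1)\le d(2^r-1)$, with the small cases $d\le 4$ checked by hand. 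Note how this neatly unifies your two regimes: the same inequality $T+S\le 2^r-1$ is what makes the bound work both when $r$ is large (few vertices) and when $r$ is small (few cocircuits, hence few facets).
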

\begin{proof}
	As remarked in~\cite{barahona1986cycle} and~\cite{gouveia2012new}, the following equations are valid for $C(M)$: a) $x_e=0$, for $e$ coloop of $M$; and b) $x_e-x_f=0$, for $\{e,f\}$ cocircuit of $M$.
	
	The first equation is due to the fact that a coloop cannot be contained in a cycle, and the second to the fact that circuits and cocircuits have even intersection in binary matroids. A consequence of this is that we can delete all coloops and contract $e$ for any cocircuit $\{e,f\}$ without changing the cycle polytope: for simplicity we will just assume that $M$ has no coloops and no cocircuit of length 2. In this case $C(M)$ has full dimension $d=|E|$.  Let $r$ be the rank of $M$. Corollary~\ref{cor:cycles-bound} implies that $C(M)$ has $2^{d-r}$ vertices. Let now $T$ be the number of cotriangles (i.e., cocircuits of length 3) in $M$, and $S$ the number of cocircuits of length 4 in $M$. Thanks to Theorem~\ref{thr:barahona1986cycle} and to the fact that $M$ has no chordless cocircuit of length at least 5, we have that $C(M)$ has at most $2d+4T+8S$ facets. Hence the bound we need to show is:
	$$
	2^{d-r}(2d+4T+8S)\leq d2^{d+1}, \; \text{ which is equivalent to } \; 2T+4S\leq d(2^{r}-1).
	$$
	Since the cocircuits of $M$ are circuits in the binary matroid $M^*$, whose rank is $d-r$, we can apply Corollary~\ref{cor:cycles-bound} to get $T+S\leq 2^r-1$, where the $-1$ comes from the fact that we do not count the empty set. Hence, if $d\geq 4$,
	$$
	2T+4S\leq 4(T+S)\leq d(2^{r}-1).
	$$
	
	The bound is loose for $d\geq 5$. The cases with $d\leq 4$ can be easily verified, the only tight examples being affinely isomorphic to cubes and cross-polytopes.
\end{proof}

\begin{corollary}
	2-level cut polytopes satisfy Conjecture~\ref{main-conjecture}.
\end{corollary}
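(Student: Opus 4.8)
The plan is to derive this as an immediate corollary of Lemma~\ref{thr:cycle-polytope-conjecture}, using the fact that cut polytopes are a special case of matroid cycle polytopes. First I would recall the remark made earlier in this section: if $M$ is cographic, say $M = M^*(G)$ is the dual of the forest matroid of a graph $G$, then the cycles of $M$ correspond exactly to the cuts of $G$, so $C(M) = CUT(G)$. Hence a $2$-level cut polytope $CUT(G)$ is precisely $C(M)$ for a cographic — and in particular binary — matroid $M$.

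The second ingredient is to check that the forbidden-minor hypotheses of Lemma~\ref{thr:cycle-polytope-conjecture} are automatically satisfied in the cographic case. A cographic matroid contains no minor isomorphic to $M_{K_5}^*$ exactly when $G$ has no $K_5$-minor (which, by the Corollary just above Lemma~\ref{lem:cycle-space-binary}, is forced by $2$-levelness of $CUT(G)$); moreover cographic matroids contain no $F_7^*$ minor and no $R_{10}$ minor (these are the standard excluded minors separating the graphic/cographic worlds — $F_7^*$ is not cographic, and $R_{10}$ is neither graphic nor cographic, and the class of cographic matroids is minor-closed). So whenever $CUT(G)$ is $2$-level, the associated matroid $M$ is binary, has none of the three forbidden minors, and $C(M) = CUT(G)$ is $2$-level, so Lemma~\ref{thr:cycle-polytope-conjecture} applies and gives $f_0(CUT(G))\, f_{d-1}(CUT(G)) \le d\, 2^{d+1}$, with equality only for cubes and cross-polytopes as in that lemma.

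Concretely, the proof would read roughly: ``Let $G$ be a graph with $CUT(G)$ $2$-level. By the Corollary preceding Lemma~\ref{lem:cycle-space-binary}, $G$ has no $K_5$-minor, so the forest matroid of $G$ is graphic and its dual $M$ is cographic, hence binary and with no minor isomorphic to $F_7^*$, $R_{10}$, or $M_{K_5}^*$. Since $C(M)=CUT(G)$ is $2$-level, Lemma~\ref{thr:cycle-polytope-conjecture} yields Conjecture~\ref{main-conjecture} for $CUT(G)$.''

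The only mild obstacle I anticipate is justifying cleanly that cographic matroids avoid $F_7^*$ and $R_{10}$ as minors; this is classical (e.g.\ via Tutte's excluded-minor characterization of graphic/cographic matroids in~\cite{oxley2006matroid}), so it can simply be cited rather than reproved. Everything else is a direct translation between the language of cuts of $G$ and cycles of its cographic matroid, which has already been set up in the section.
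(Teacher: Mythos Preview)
Your proposal is correct and follows exactly the route the paper intends: the corollary is stated in the paper without proof, immediately after Lemma~\ref{thr:cycle-polytope-conjecture}, relying on the earlier remark that ``this class includes all cut polytopes that are $2$-level'' together with the identification $CUT(G)=C(M^*(G))$. Your write-up simply makes explicit the verification of the three forbidden-minor hypotheses, which is precisely what is needed. One cosmetic point: in your compressed version the clause ``its dual $M$ is cographic, hence binary and with no minor isomorphic to $F_7^*$, $R_{10}$, or $M_{K_5}^*$'' reads as if cographicness alone excludes $M_{K_5}^*$, which is false; make sure the final wording ties the exclusion of $M_{K_5}^*$ to the absence of a $K_5$-minor in $G$, as you correctly do in the preceding paragraph.
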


\section{On possible generalizations of the conjecture}\label{sec:conclusion}

In this paper, we provided a thorough analysis of $2$-level polytopes coming from combinatorial settings. 
We hope that the reader shares with us the opinion that those polytope are relevant for the mathematical community, and the $2$-levelness property seem to be strong enough to leave hope for deep theorems on their structure. 
While we proved Conjecture~\ref{main-conjecture} for all $2$-level polytopes we could characterize, it remains open for the general case. Whether some techniques and ideas introduced in this paper can be extended to attack it also remains open. Here, we would like to discuss a different issue stemming from Conjecture~\ref{main-conjecture}: is $2$-levelness the ``right" assumption for proving $f_{d-1}(P)f_0(P)\leq d 2^{d+1}$, or is this bound valid for a much more general class of $0/1$ polytopes -- or, more broadly, of mathematical objects? We start the investigation of this question by providing some examples of ``well-behaved" $0/1$ polytopes that do not verify Conjecture~\ref{main-conjecture}. The first two can be seen as immediate generalizations of polytopes for which Conjecture~\ref{main-conjecture} holds, see Corollary~\ref{cor:spanningtreeSP}.

\subsection{Forest polytope of $K_{2,n}$}
Let $P$ be the forest polytope of $K_{2,n}$. Note that $P$ has dimension $d=2n$. Conjecture~\ref{main-conjecture} implies an upper bound of $n 2^{2(n+1)}=O(4+\varepsilon)^n$ for $f_{0}(P)f_{d-1}(P)$, for any $\varepsilon >0$. Each subgraph of $K_{2,n}$ that takes, for each node $v$ of degree $2$, at most one edge incident to $v$, is a forest. Those graphs are $3^n$. Moreover, each induced subgraph of $K_{2,n}$ that takes the nodes of degree $n$ plus at least $2$ other nodes is $2$-connected, hence it induces a (distinct) facet of $P$. Those are $2^{n} -(n+1)$. In total $f_0(P) f_{d-1}(P) = \Omega(6^n)$.

\subsection{Spanning tree polytope of the skeleton of the $4$-dimensional cube}
Let $G$ be the skeleton of the $4$-dimensional cube, and $P$ the associated spanning tree polytope. Through extensive computation\footnote{We computed the number of spanning trees of $G$ using the well known Kirchhoff's matrix tree theorem~\cite{buekenhout1998number}. The facets of the spanning tree polytope of a 2-connected graph $G$ are roughly as many as the 2-connected, induced subgraphs of $G$ whose contraction is 2-connected, and we compute them by exhaustive search. The Matlab code can be found at: \url{http://disopt.epfl.ch/files/content/sites/disopt/files/users/249959/flacets.zip}}, we verified that $f_{0}(P) f_{d-1}(P) \geq 1.603 \cdot 10^{11}$, while the upper bound from Conjecture~\ref{main-conjecture} is $\approx1.331  \cdot 10^{11}$.

\subsection{$3$-level min up/down polytopes}\label{ex:minupdown}
Fix $d\geq 3$. A 0/1 vector $x\in\{0,1\}^d$ is ``bad" if there are indices $0<i<j<d$ such that $x_i=x_{j+1}=1$ and $x_{i+1}=x_j=0$. In other words, when seen as a bit-string, $x$ is bad if it contains two or more separate blocks of 1's. Let $P\subset\mathbb{R}^d$ be the convex hull of all 0/1 vectors that are not bad: this is a min up/down polytope, as defined in~\cite{lee2004min}, with parameters $\ell_1=1$ and $\ell_2=d-1$\footnote{Recall that the min up/down polytope is 2-level precisely when its parameters $\ell_1$ and $\ell_2$ are equal, and in that case the polytope satisfies Conjecture~\ref{main-conjecture}, see Proposition~\ref{thr:up/down}.}.

Each non-zero vertex $x$ in $P$ contains exactly one block of 1's, thus it is uniquely described by two indices $0\leq i<j\leq d$, such that $x_k=1$ if $i<k\leq j$, and $x_k=0$ otherwise. Therefore (counting also the zero vector), $P$ contains $\binom{d+1}{2}+1$ vertices. On the other hand, from the facet characterization presented in~\cite{lee2004min} we know that 
$$P=\Big\{x\in \mathbb{R}^d_+: \ \sum_{j=1}^k (-1)^{j-1} x_{i_j} \leq 1, \text{ for } 1\leq i_1<\cdots<i_k\leq d \text{ s.t. $k$ is odd}\Big\},$$
where all inequalities above are facet-defining. Moreover, since the polytope is full-dimensional (it contains the $d$-dimensional standard simplex) and no inequality is a multiple of another, they all define distinct facets.  This means that there are $d$ facets coming from non-negativity constraints, and $2^{d-1}$ facets that are in one-to-one correspondence with odd subsets of the index set $[d]$. Hence, the total number of facets is $2^{d-1}+d$. It is easy to check that for $d\geq 3$ we have
$$f_0 (P) f_{d-1}(P) = \left[\binom{d+1}{2}+1\right]\cdot[2^{d-1}+d]>d2^{d+1},$$
thus the polytope does not satisfy Conjecture~\ref{main-conjecture}. Note that $P$ is a \emph{3-level} polytope: for each facet $F$ of $P$, there exist two translates of the affine hull of $F$ such that all the vertices of $P$ lie either in $F$ or in one of those two translates.

\smallskip

In the remaining sections, we move to extensions of Conjecture~\ref{main-conjecture} to other settings. In some of those cases we could prove that the conjecture does not hold. Others are interesting open questions. 

\subsection{Polytopes of minimum PSD rank} $2$-level polytopes are an example of polytopes with minimum PSD rank, i.e. such that they admit a semidefinite extension of size $1+dim(P)$, see~\cite{GRT13}. A necessary and sufficient condition characterizing those polytopes is given in~\cite{GRT13}, where the full list of combinatorial classes of polytopes with minimum PSD rank in dimension 2 and 3 is also given. All those are combinatorially equivalent to some $2$-level polytope of the same dimension, with the exception of the bypiramid over a triangle, which clearly verifies Conjecture~\ref{main-conjecture}.  In~\cite{GPRT17}, the full list of combinatorial classes of polytopes with minimum PSD rank in dimension 4 is given. By going through the list of their $f$-vectors in \cite[Table 1]{GPRT17}, one easily checks that they also verify Conjecture~\ref{main-conjecture}. We are not aware of studies on higher-dimensional polytopes with minimum PSD rank. We remark that in~\cite{Grande14} it is proved that matroid base polytopes have minimum PSD rank if and only if they are 2-level, hence Lemma~\ref{thr:conj-matroid} trivially implies that all matroid polytopes with minimum PSD rank satisfy the bound of Conjecture~\ref{main-conjecture}.

\subsection{Polytopes with structured linear relaxations}\label{ex:LR} 
We consider possible generalizations of the conjecture based on the ${\cal H}-$embedding of $2$-level polytopes. Those are defined in~\cite{bohn2015enumeration}, where it is shown that the family of $2$-level polytopes of dimension $d$ is affinely equivalent to the family of integral polytopes of the form
\begin{equation}\label{eq:2L-embed}
	P=\{ x \in \R^d : 0\leq x(I) \leq 1 \quad \hbox{ for all }I\subseteq {\cal I}\}\end{equation}
for some ${\cal I}\subseteq 2^{[d]}$. Hence, Conjecture~\ref{main-conjecture} holds for $2$-level polytopes if and only if it holds for integral polytopes of the form \eqref{eq:2L-embed}.
First, notice that the bound of the conjecture does not hold for general (i.e., non-integral) polytopes of the form \eqref{eq:2L-embed}, as for instance fractional stable set polytopes are of such form. In particular, consider the fractional stable set polytope of the complete graph on $d$ vertices, $P=\{x\in \R^d: x\geq 0, x_i+x_j\leq 1 \;\forall\; i\neq j, i,j\in [d]\}$. Clearly $P$ can be written in form \eqref{eq:2L-embed}, and has $d+{d \choose 2}$ facets. It is not hard to see%
\footnote{We refer to \cite[Chapter 64]{schrijver2003combinatorial} for further details.} 
that $P$ has $d+1$ integral vertices, and exponentially many fractional vertices obtained by setting at least three coordinates to $1/2$ and the others to 0, hence $f_0(P)=2^d-{d \choose 2}$, and we have $f_0(P)f_{d-1}(P)=\left[d+{d \choose 2}\right]\left[2^d-{d \choose 2}\right]> d2^{d+1}$ for $d\geq 5$.

Another natural question is whether the bound of Conjecture~\ref{main-conjecture} holds for integral polytopes that admit a \emph{linear relaxation} of the form \eqref{eq:2L-embed}. 
More formally, let $P_I$ be the integer hull of a polytope $P$. Is it true that, for all $P$ of the form \eqref{eq:2L-embed}, one has $f_0(P_I)f_{d-1}(P_I)\leq d2^{d+1}$? 
Note that this seems to be too general to be true, since such $P$ include, for instance, all stable set polytopes. 
However, given the difficulty of building explicit polytopes with many facets (see~\cite{KRSZ} for some constructions and a discussion), finding a counter-example is non-trivial. 
Through extensive computation with \texttt{polymake}, we found a $12$-dimensional polytope $P$ that violates the conjecture. 
Indeed, for $d=12$ the bound of the conjecture is $98304$, while $f_0(P_I)f_{d-1}(P_I)=535392$. We give an explicit description of the polytope in Appendix~\ref{app:LR}.

\subsection{$0/1$ matrices generalizing slack matrices of $2$-level polytopes}

As mentioned in Section~\ref{sec:intro}, Conjecture~\ref{main-conjecture} can be rephrased as an upper bound on the number of entries of the smallest slack matrices of $2$-level polytopes. It is then a natural question whether one can extend the conjecture on classes of matrices strictly containing those matrices. 

Let $M \in \{0,1\}^{m\times n}$ be a matrix without any repeated row or column. Using the characterization given in~\cite{CharSM}, we have that $M$ is the slack matrix of a $2$-level $d$-polytope $P$ with $d\geq 2$ if and only if: 
\begin{enumerate}[(i)]
	\item $\rk(M)=d+1$;\item The all-ones vector ${\bf 1}$ belongs to the space generated by the rows of $M$; \item The cone generated by the rows of $M$ coincide with the intersection of the space generated by the rows of $M$ with the nonnegative orthant.
\end{enumerate}
Moreover, if $M$ is a minimal slack matrix for $P$, then 
\begin{enumerate}[(iv)]
 \item Rows of $M$ have incomparable supports; and
 \item Columns of $M$ have incomparable supports.
\end{enumerate}
We would like to understand what happens to Conjecture~\ref{main-conjecture} when one of those properties is relaxed. 

Relaxing (i) does not make sense, since it leads to slack matrices of $2$-level polytopes of any dimension, which clearly violate the conjecture. Now suppose we relax (iv). Let $M$ be the slack matrix of the $d$-dimensional cube, and $M'$ obtained from $M$ by adding a row of $1s$. $M'$ verifies properties (i)-(ii)-(iii)-(v), since it is obtained from $M$ by adding a row that is already in the conic hull of rows of $M$. On the other hand, since the cube verifies Conjecture~\ref{main-conjecture} at equality, $M'$ does not verify the conjecture. Similarly, if we relax (v) instead of (iv), add a column of $1$ to $M$ as to obtain $M''$. Note that this new column is also in the conic hull of the columns of $M$, since $M^\intercal$ is the slack matrix of the $d$-dimensional cross-polytope. Hence $M''$ verifies properties (i)-(ii)-(iii)-(iv) but not Conjecture~\ref{main-conjecture}. Finding counterexamples to the conjecture when property (ii) or (iii) are relaxed seems to be harder, hence an interesting open question. Note that, when (ii) is relaxed, the question again has a geometric interpretation, since $M$ is the slack matrix of a polyhedral cone, see~\cite{CharSM}. 

We now investigate what happens if we relax the conditions above even further, and only impose that the rank of $M \in \{0,1\}^{m \times n}$ be $d$, and that $M$ do not have any repeated row or column. From the discussion above, we know that $M$ does not verify Conjecture~\ref{main-conjecture}, but which bound can one give on $m\cdot n?$ An easy argument implies that the maximum number of distinct rows (resp. columns) is $2^d$, hence $m\cdot n\leq 4^d$. Indeed, consider $c_1,\dots,c_d$ linearly independent columns of $M$. Any other column is a linear combinations of the $c_i$'s. But then, if two rows coincide on $c_1,\dots,c_d$, then they are equal, a contradiction. Hence all the rows must be distinct on $c_1,\dots, c_d$, but then, since $M$ has entries 0/1, there can be at most $2^d$ rows. 

We now show that the bound $m\cdot n \leq 4^d$ is not tight. We first show the following:
\begin{lemma}\label{lem:rowandcols} Let $M \in \{0,1\}^{m \times n}$ be of rank $d$, with no repeated rows or columns, and suppose it contains the identity matrix $I_d$ as submatrix. $m \cdot n \leq (d+1)2^d$. \end{lemma}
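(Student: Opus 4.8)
The plan is to exploit the submatrix $I_d$ to set up a counting argument over the rows, analogous in spirit to the extension-complexity bounds but with an explicit small constant. First I would fix an identity submatrix $I_d$ of $M$, say on row set $R_0=\{r_1,\dots,r_d\}$ and column set $C_0=\{c_1,\dots,c_d\}$ with $M[r_i,c_j]=[i=j]$. Since $\rk(M)=d$ and the columns $c_1,\dots,c_d$ are linearly independent (their restriction to $R_0$ is $I_d$), every column of $M$ is a linear combination of $c_1,\dots,c_d$; hence, as in the paragraph preceding the lemma, every row of $M$ is determined by its restriction to $C_0$, so $n\le 2^d$ already and the real work is bounding $m$.

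Next I would bound the number of distinct rows by classifying each row $x\in\{0,1\}^n$ by its restriction $x|_{C_0}\in\{0,1\}^d$. The key claim is that for each pattern $v\in\{0,1\}^d$, the number of rows of $M$ with $x|_{C_0}=v$ is at most $|v|+1$ if $v\neq 0$ is ``spread out'' enough, and in aggregate $\sum_v (\text{rows with pattern }v) \le (d+1)2^d/2^d \cdot 2^d$; more precisely I expect the right bookkeeping to give $m\le (d+1)2^d / (\text{something})$ — so let me be concrete. Here is the mechanism: consider the rows $r_1,\dots,r_d$ of $I_d$ itself together with any other row $x$. Using that row $r_i$ has a $1$ in column $c_i$ and $0$ in all other $c_j$, and that $x$ is a $\{0,1\}$-combination-consistent vector, one shows that $x$, viewed through columns $C_0$, is highly constrained: writing $x = \sum_i \lambda_i r_i + (\text{row combination})$ is not available, but the column structure forces, for each fixed support pattern $S=\{i: x_{c_i}=1\}$, that the rows with that pattern differ only in a controlled way. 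The cleanest route is to argue that the map sending a row to its $C_0$-pattern is such that each fibre has size at most $2$ when $|S|\le 1$ is excluded appropriately — but since that would give $m\le 2\cdot 2^d = 2^{d+1}$ which is stronger than claimed, I suspect the actual argument is looser and the $(d+1)$ factor comes from a different split.

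So the approach I would actually commit to: apply the already-proven trade-off machinery indirectly. Note $M^\intercal$ also has rank $d$, no repeated rows/columns, and contains $I_d$; by symmetry any bound on $m$ in terms of $d$ also bounds $n$. The honest plan is to bound $m$ directly: partition the rows of $M$ according to which rows among $r_1,\dots,r_d$ are ``dominated'' in the sense of support containment, then within each class use linear independence over the columns $C_0$ to pin down the row up to at most one binary degree of freedom per coordinate of the pattern, yielding $m \le \sum_{S\subseteq[d]} (\text{at most }|S|+1) \cdot[\text{nonempty contribution}]$, and then invoke the identity $\sum_{k=0}^d (k+1)\binom{d}{k} = (d+1)2^{d-1} + 2^d$ — wait, that is $\sum k\binom dk + \sum \binom dk = d2^{d-1}+2^d$, which is at most $(d+1)2^{d}$ comfortably. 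Combining $m \le d2^{d-1}+2^d \le (d+1)2^d$ (actually much smaller) would over-deliver, so the intended proof must be showing $m\cdot n \le (d+1)2^d$ via a \emph{joint} argument: bound the pairs $(\text{row},\text{column})$ or equivalently the number of $1$-entries is not the point — rather bound $\min(m,n)$ trivially by $2^d$ and the other by $(d+1)$ after using that $I_d\subseteq M$ forces a near-identity structure. The main obstacle I anticipate is precisely nailing down the correct intermediate claim: whether each $C_0$-fibre has bounded size, or whether one must instead bound the number of \emph{distinct} rows by an inclusion–exclusion over supports, and making the constant come out to exactly $d+1$ rather than $2$ or $d$. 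I would resolve this by testing the statement on $M$ being the slack matrix of a cube or cross-polytope (where $m=2d$, $n=2^d$ or vice versa, and $(d+1)2^d$ is the claimed ceiling) to reverse-engineer which side gets the $d+1$ and which gets the $2^d$, then write the fibre-counting argument accordingly.
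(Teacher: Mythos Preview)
Your proposal has a genuine gap: you never find the mechanism that bounds the \emph{product} $m\cdot n$, and your fibre-counting attempts cannot work. Since rows are determined by their restriction to $C_0$ (as you note), the fibres of the map $x\mapsto x|_{C_0}$ all have size exactly $1$; this gives $m\le 2^d$ and nothing more. By symmetry $n\le 2^d$. But neither factor individually can be pushed below $2^d$ in general, so any argument that bounds $m$ and $n$ separately and multiplies will be stuck at $4^d$. Your later suggestion that one factor should be bounded by $d+1$ is simply false (take the slack matrix of a Hanner polytope other than the cube/cross-polytope).

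The missing idea is the following joint argument. Having placed $I_d$ in the upper-left corner, every column $c_i$ is a linear combination $\sum_{j=1}^d \alpha^{(i)}_j c_j$, and reading off the first $d$ entries of $c_i$ shows $\alpha^{(i)}_j\in\{0,1\}$. Now define a graph $G$ on vertex set $[d]$ by joining $j$ and $k$ whenever some column has $\alpha^{(i)}_j=\alpha^{(i)}_k=1$. Then the coefficient vector of each column is (the indicator of) a clique of $G$, and distinct columns give distinct cliques. Dually, the $C_0$-restriction of each row is a stable set of $G$: if a row $r$ had $r_j=r_k=1$ with $jk\in E(G)$, then for a witnessing column $c_i$ the entry $M_{r,c_i}=\sum_j \alpha^{(i)}_j r_j\ge 2$, contradicting $M\in\{0,1\}^{m\times n}$. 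Distinct rows give distinct stable sets. Hence $n\le |\C'|$ and $m\le |{\cal S}'|$, and the paper's own clique/stable-set trade-off (Corollary~\ref{cor:trade-off}) gives $m\cdot n\le |\C'|\,|{\cal S}'|\le (d+1)2^d$. This is precisely the ``already-proven trade-off machinery'' you allude to but never identify; without it the argument does not close.
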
 \begin{proof}
	Assume without loss of generality that $I_d$ is exactly the upper $d \times d$ left corner of $M$. Then the first $d$ columns of $M$, denoted by $c_1,\dots, c_d$, are linearly independent, and the first $d$ entries of $c_i$ form the vector $e_i$ for $i=1,\dots,d$. Since $\rk(M)=d$, every other column $c_i$, $i>d$, can be written as $\sum_{j=1}^d \alpha^{(i)}_j c_j$ for some coefficients $\alpha^{(i)}_j$'s. But the first $d$ entries of such $c_i$ are exactly $\alpha^{(i)}_1,\dots,\alpha^{(i)}_d$, hence, as $M$ has 0/1 entries, we have $\alpha^{(i)}_j\in\{0,1\}$ for any $i,j$. Now, consider a graph $G$ with vertex set $[d]$, where node $j$ and node $k$ are adjacent if, for some $i$, we have $\alpha^{(i)}_j=\alpha^{(i)}_k=1$. Clearly each column of $M$ corresponds to a clique of $G$ (including $c_1,\dots, c_d$, which correspond to singletons). Notice also that two columns $c_i ,c_h$ cannot correspond to the same clique, as this would imply that $\alpha^{(i)}=\alpha^{(h)}$, hence that $c_i=c_h$. Now, for any row $r$ of $M$, consider its first $d$ entries. If for some $j<k\leq d$ we have $r_j=r_k=1$, then we cannot have $\alpha^{(i)}_j=\alpha^{(i)}_k=1$ for any column $i$, otherwise the entry of $M$ corresponding to $r$ and $c_i$ would be at least 2, a contradiction. Hence, each row of $M$ corresponds to a stable set of $G$. As before, notice that no two rows can correspond to the same stable set. But then we can apply Corollary~\ref{cor:trade-off} to $G$: defining $\C',{\cal S}'$ for $G$ as in the corollary, we obtain that the size of $M$ is at most $|\C'||{\cal S}'|\leq (d+1)2^d$.
\end{proof}

Now, it easily follows that any 0/1 matrix with rank $d$, and no repeated rows of columns, cannot have $2^d$ rows \emph{and} $2^d$ columns. Assume that $M$ has $2^d$ rows: we will show that it satisfies the hypothesis of the above claim, i.e. that it contains $I_d$ as a submatrix. Let $c_1,\dots,c_d$ linearly independent columns of $M$, and let $M'$ be $M$ restricted to these columns. As argued before, the rows of $M'$  are all different: two rows that coincide in $M'$ yield equal rows in $M$. But then all possible 0/1 vectors must appear as rows of $M'$, in particular $M'$ (hence $M$) contains $I_d$ as a submatrix. In conclusion, the claim implies that $M$ has at most $d+1$ columns, and analogously, if we assume that $M$ has $2^d$ columns, we obtain that $M$ has at most $d+1$ rows, hence the bound $4^d$ cannot be tight. 

One might wonder whether we can apply the above claim to the slack matrix of some interesting 2-level polytopes, to bound its size. However, we now show that the hypotheses of Lemma~\ref{lem:rowandcols} are too strong to be satisfied by any interesting slack matrix. Let $M$ be a minimal 0/1 slack matrix of a polytope $P$ of dimension $d$, hence $\rk(M)=d+1$, and assume that $M$ contains $I_{d+1}$. We claim that $P$ is the $d+1$-dimensional simplex and $M=I_{d+1}$. The argument is similar to the previous one and we only sketch it. Condition (ii) states that the all-ones vector ${\bf 1}$ belongs to the space generated by the rows of $M$. But this space is generated by those rows $r_1,\dots, r_{d+1}$ of $M$ which contain $I_{d+1}$; hence ${\bf 1}=\sum_{i=1}^{d+1} \alpha_i r_i$, which implies similarly as before that $\alpha_i=1$ for $i=1,\dots,{d+1}$. It then follows from the fact that $M$ has all distinct columns that $M$ has exactly ${d+1}$ columns. Hence $P$ is a $d$-dimensional polytope with $d+1$ vertices, i.e., it is a simplex.

\bigskip

\noindent {\bf Acknowledgements.} We thank Jon Lee for introducing us to min up/down polytopes, and pointing us to the paper~\cite{lee2004min}. We are indebted to the referees for their work, that improved the presentation of the paper and suggested research directions. We moreover thank one of them for an argument that significantly simplified the proof of Lemma~\ref{thr:conj-matroid} and Theorem~\ref{thr:matroid}.

\bibliographystyle{plain}

\bibliography{enum-bibliography}{}

\appendix 

\section{Proof of Proposition~\ref{lem:linearindep}}\label{app:linearindep}
	We first prove the following claim: for an edge $mw\in E$ and two rotations $\rho_1, \rho_2\in \Pi$, if $mw\in \rho_1^+ \cap \rho_2^-$, then $\rho_1$ precedes $\rho_2$. 
	Notice first that $\rho_1$ and $\rho_2$ must be in distinct rotations, because the head and the tail of any rotation are always disjoint. 
	Now, consider any $\mu_0-\mu_z$ path $P$ in $H$: we know that each of $\rho_1$ and $\rho_2$ is generated by an arc in $P$ exactly once, by Lemma~\ref{rotation}~(1), 
	and we also know that the happiness of woman $w$ increases monotonously along the path. 
	If $\rho_2$ was generated before $\rho_1$, this would imply that $w$ leaves partner $m$ only to go back to him later on, which violates monotonicity. This proves the claim.
	
	To prove the thesis, consider the linear combination
	\begin{equation} \label{linearindep}
		\sum_{\rho \in \Pi} \lambda_\rho (\chi^{\rho^+} - \chi^{\rho^-}) = 0,
	\end{equation}
	for some coefficients $\lambda_\rho$, and assume by contradiction that not all coefficients are zero.
	Among all rotations $\rho$ with $\lambda_\rho\neq 0$, let $\rho_2$ be a minimal one on the corresponding restriction of the rotation poset, and let $mw$ be an edge in $\rho_2^-$ (such edge exists as no rotation tail can be empty).
	In \cite[Lemma 3.2.1]{gusfield1989stable} it is proved that each edge in $E$ appears in the tail of at most one rotation (as well as in the head of at most one rotation in $\Pi$). 
	Hence, $mw$ appears in no other tail, so for equation \eqref{linearindep} to hold, $mw$ must appear in the head of a distinct rotation $\rho_1$, with $\lambda_{\rho_1} \neq 0$. 
	By the previous claim, $\rho_1$ precedes $\rho_2$, which contradicts the choice of rotation $\rho_2$. This completes the proof.

\section{Proof of Lemma~\ref{obs:2sumpolydescription}}\label{app:2sumpolydescription} Let $Q=B(M_1)\times B(M_2)\cap\{x\in\R^{E_1\uplus E_2}: x_{p_1}+x_{p_2}=1\}$.  
We first claim that $V(Q)=\{(\chi^{B_1},\chi^{B_2}): B_i\in \calB(M_i), i=1,2, p\in B_1\triangle B_2\}$, where $V(P)$ denotes the vertex set of a polytope $P$. 
The ``$\supseteq$'' inclusion is obvious.  
For the opposite inclusion, we just need to prove that the intersection of $B(M_1)\times B(M_2)$ with the hyperplane $H:=\{x\in\R^{E_1\uplus E_2}: x_{p_1}+x_{p_2}=1\}$ does not create any new vertex. 
Suppose that such a vertex $v$ exists: then $v$ is the intersection of $H$ with (the interior of) an edge of $B(M_1)\times B(M_2)$. 
Notice that, using the properties of adjacency of the cartesian product, we can assume without loss of generality that $v=\lambda w+ (1-\lambda) w'$ for some  $0<\lambda<1$, 
where $w=(\chi^{B_1},\chi^{B_2}), w'=(\chi^{B_1},\chi^{B'_2})$ are vertices of $B(M_1)\times B(M_2)$, with $\chi^{B_2}, \chi^{B'_2}$ adjacent vertices of $B(M_2)$. 
In particular we have that $w_{p_1}=w'_{p_1}$, but this is a contradiction since $w,w'$ must be on two different sides of $H$. 
Now, let $E=E_1\cup E_2-p$ be the ground set of $M$, and consider the projection $\varphi: \R^{E_1\uplus E_2}\rightarrow \R^{E}$, 
i.e. such that $\varphi(\vec{1}_e)=\vec{1}_e$ for any $e\in E$, and $\varphi(\vec{1}_{p_1})=\varphi(\vec{1}_{p_2})=\vec{0}$.
From what we just argued it follows that $\varphi(V(Q))=\{\chi^{B_1\cup B_2-p}: B_i\in \calB(M_i), i=1,2, p\in B_1\triangle B_2\}= V(B(M))$ hence, by convexity, $\varphi(Q)=B(M)$. 
We are left to show that $\varphi$ restricted to $Q$ is injective to conclude that $\varphi$ is a bijection from $Q$ to $B(M)$. 
To see this, assume that there are $x,y\in Q$ such that $\varphi(x)=\varphi(y)$, hence $x_e=y_e$ for any $e\in E$. 
But then since $x,y$ satisfy the rank equality of $B(M_1)$, $$x_{p_1}=\rk(M_1)-\sum_{e\in E_1-p} x_e=\
M_1)-\sum_{e\in E_1-p} y_e=y_{p_1},$$ and arguing similarly we get $x_{p_2}=y_{p_2}$, therefore we have $x=y$.

\section{Proof of Theorem~\ref{thm:oxley2006matroid-tree}}\label{app:oxley2006matroid-tree} We proceed by induction on $n=|E(M)|$. For $n=1$, $M$ is 3-connected, $T$ consists of only one vertex and there is nothing to show. For $n>1$: if $M$ is 3-connected, again there is nothing to show. Otherwise, $M=M'\oplus_2 M''$ for some matroids $M', M''$, that are connected (due to Proposition~\ref{obs:2-sum-facts}) and that satisfy $|E(M')|, |E(M'')|<n$. Hence by induction hypothesis the thesis holds for $M'$, $M''$. Let $T', T''$ be their corresponding trees, with vertices labeled by the 3-connected matroids $M_1',\dots,M_{t_1}'$, and $M_1'',\dots,M_{t_2}''$ respectively, edges labeled $e_1',\dots,e_{t_1-1}'$ and $e_1'',\dots,e_{t_2-1}''$ respectively, and let $t=t_1+t_2$. 
By definition of 2-sum there is exactly one element, which we denote by $e_{t-1}$, in $E(M')\cap E(M'')$. By induction we have:
\begin{align*}
E(M)= & E(M')\cup E(M'')\setminus \{e_{t-1}\}\\
= & \left( E(M_1')\cup \cdots \cup E(M_{t_1}')\setminus \{e_1',\dots,e_{t_1-1}'\}\right) \\
& \cup \left(E(M_1'')\cup \cdots \cup E(M_{t_2}'')\setminus \{e_1'',\dots,e_{t_2-1}''\}\right)\setminus \{e_{t-1}\}.
\end{align*}
We can assume without loss of generality that $\{e_1',\dots,e_{t_1-1}'\}\cap E(M'')=\emptyset$ by renaming the elements of $E(M'')$, and similarly we can assume $\{e_1'',\dots,e_{t_2-1}''\}\cap E(M')=\emptyset$. Since $M'$ satisfies properties 1-3, there is exactly one matroid $M_i'$ such that $e_{t-1}\in E(M_i')$, and similarly there is exactly one matroid $M_j''$ such that $e_{t-1}\in E(M_j'')$. Let $T$ be the tree obtained by joining $T', T''$ through the edge $(M_i', M_j'')$. 
Now, it is easy to check that the matroids labeling the vertices of $T$ will satisfy properties 1-3 after an appropriate renaming of the matroids and relabeling of the edges ($M_i'$ will be renamed $M_i$, $M_j''$ $M_{j+t_1}$, and similarly for the elements $e_i', e_j''$).
The statement about the contraction $T/e_1,\dots,e_{t-1}$ follows by induction: one first contracts the edges in $T'$ ($e_1,\dots, e_{t_1-1}$), then the edges in $T''$ ($e_{t_1},\dots, e_{t-2}$), obtaining vertices labeled by $M'$ and $M''$. Then, contracting the edge $e_{t-1}$ joining $M', M''$ one gets $M'\oplus_2 M''=M$. 

\section{Proof of Lemma~\ref{lem:cycle-space-binary}}\label{app:cycle-space-binary} That $\cal C$ is a vector space can be easily verified using the fact that $\cal C$ is closed under taking symmetric difference. This immediately derives from a characterization of binary matroids that can be found in~\cite{oxley2006matroid}, Theorem 9.1.2: $M$ is binary if and only if the symmetric difference of any set of circuits is a disjoint union of circuits. We will now give a basis for $\cal C$ of size $d-r$. The construction is analogous to the construction of a fundamental cycle basis in the cycle space of a graph. Consider a basis $B$ of $M$. For any $e \in E\setminus B$, let $C_e$ denote the unique circuit contained in $B+e$ (note that $e\in C_e$). Since $|B|=r$, we have a family $B_{\cal C} =\{C_{e_1},\dots, C_{e_{d-r}}\}$ of the desired size. Note that the $C_{e_i}$'s are all linearly independent: indeed, $C_e$ cannot be expressed as symmetric difference of other members of $B_{\cal C}$ since it is the only one containing $e$. We are left to show that $B_{\cal C}$ generates $\cal C$. Let $C \in {\cal C}$, $C\neq \emptyset$, and let $\{e_1,\dots, e_{d-r}\}\cap C=e_{i_1},\dots, e_{i_k}$ for some $k\geq 1$ (indeed, $C\not\subseteq B$). Consider now $D=C\triangle C_{e_{i_1}}\triangle\dots\triangle C_{e_{i_k}}$. $D$ is a cycle, however one can see that it is contained in $B$: for each $e\in E\setminus B$, if $e \in C$ then $e$ appears exactly twice in the expression of $D$, hence $e\not\in D$; if $e\not\in C$, $e$ does not appear in the expression at all. This implies that $D=\emptyset$, which is equivalent to $C=C_{e_{i_1}}\triangle\dots\triangle C_{e_{i_k}}$.

\section{The polytopes from Proposition~\ref{lem:notHansen}}\label{app:notHansen} The following are the \texttt{polymake} vertex descriptions of the two $8$-dimensional polytopes from Proposition~\ref{lem:notHansen}: 
the min up/down polytope $P_8(2)$ is denoted by \$P, and the Hansen polytope $\Hans(P_7)$ of the path on 7 nodes $P_7$ is denoted by \$H. 

\smallskip

\texttt{\$P = new Polytope(VERTICES=> $[$ \\ 
	$[$1, 0, 0, 0, 0, 0, 0, 0, 0$], [$1, 1, 1, 1, 1, 1, 1, 1, 1$],$\\ 
	$[$1, 0, 0, 0, 0, 0, 0, 0, 1$], [$1, 1, 1, 1, 1, 1, 1, 1, 0$],$ \\
	$[$1, 0, 0, 0, 0, 0, 0, 1, 1$], [$1, 1, 1, 1, 1, 1, 1, 0, 0$],$ \\
	$[$1, 0, 0, 0, 0, 0, 1, 1, 1$], [$1, 1, 1, 1, 1, 1, 0, 0, 0$],$ \\
	$[$1, 0, 0, 0, 0, 0, 1, 1, 0$], [$1, 1, 1, 1, 1, 1, 0, 0, 1$],$ \\
	$[$1, 0, 0, 0, 0, 1, 1, 1, 1$], [$1, 1, 1, 1, 1, 0, 0, 0, 0$],$ \\
	$[$1, 0, 0, 0, 0, 1, 1, 1, 0$], [$1, 1, 1, 1, 1, 0, 0, 0, 1$],$ \\
	$[$1, 0, 0, 0, 0, 1, 1, 0, 0$], [$1, 1, 1, 1, 1, 0, 0, 1, 1$],$ \\
	$[$1, 0, 0, 0, 1, 1, 1, 1, 1$], [$1, 1, 1, 1, 0, 0, 0, 0, 0$],$ \\
	$[$1, 0, 0, 0, 1, 1, 1, 1, 0$], [$1, 1, 1, 1, 0, 0, 0, 0, 1$],$ \\
	$[$1, 0, 0, 0, 1, 1, 1, 0, 0$], [$1, 1, 1, 1, 0, 0, 0, 1, 1$],$ \\
	$[$1, 0, 0, 0, 1, 1, 0, 0, 0$], [$1, 1, 1, 1, 0, 0, 1, 1, 1$],$ \\
	$[$1, 0, 0, 0, 1, 1, 0, 0, 1$], [$1, 1, 1, 1, 0, 0, 1, 1, 0$],$ \\
	$[$1, 0, 0, 1, 1, 1, 1, 1, 1$], [$1, 1, 1, 0, 0, 0, 0, 0, 0$],$ \\
	$[$1, 0, 0, 1, 1, 1, 1, 1, 0$], [$1, 1, 1, 0, 0, 0, 0, 0, 1$],$ \\
	$[$1, 0, 0, 1, 1, 1, 1, 0, 0$], [$1, 1, 1, 0, 0, 0, 0, 1, 1$],$ \\
	$[$1, 0, 0, 1, 1, 1, 0, 0, 0$], [$1, 1, 1, 0, 0, 0, 1, 1, 1$],$ \\
	$[$1, 0, 0, 1, 1, 1, 0, 0, 1$], [$1, 1, 1, 0, 0, 0, 1, 1, 0$],$ \\
	$[$1, 0, 0, 1, 1, 0, 0, 0, 0$], [$1, 1, 1, 0, 0, 1, 1, 1, 1$],$ \\
	$[$1, 0, 0, 1, 1, 0, 0, 0, 1$], [$1, 1, 1, 0, 0, 1, 1, 1, 0$],$ \\
	$[$1, 0, 0, 1, 1, 0, 0, 1, 1$], [$1, 1, 1, 0, 0, 1, 1, 0, 0$],$ \\
	$[$1, 0, 1, 1, 1, 1, 1, 1, 1$], [$1, 1, 0, 0, 0, 0, 0, 0, 0$],$ \\
	$[$1, 0, 1, 1, 1, 1, 1, 1, 0$], [$1, 1, 0, 0, 0, 0, 0, 0, 1$],$ \\
	$[$1, 0, 1, 1, 1, 1, 1, 0, 0$], [$1, 1, 0, 0, 0, 0, 0, 1, 1$],$ \\
	$[$1, 0, 1, 1, 1, 1, 0, 0, 0$], [$1, 1, 0, 0, 0, 0, 1, 1, 1$],$ \\
	$[$1, 0, 1, 1, 1, 1, 0, 0, 1$], [$1, 1, 0, 0, 0, 0, 1, 1, 0$],$ \\
	$[$1, 0, 1, 1, 1, 0, 0, 0, 0$], [$1, 1, 0, 0, 0, 1, 1, 1, 1$],$ \\
	$[$1, 0, 1, 1, 1, 0, 0, 0, 1$], [$1, 1, 0, 0, 0, 1, 1, 1, 0$],$ \\
	$[$1, 0, 1, 1, 1, 0, 0, 1, 1$], [$1, 1, 0, 0, 0, 1, 1, 0, 0$],$ \\
	$[$1, 0, 1, 1, 0, 0, 0, 0, 0$], [$1, 1, 0, 0, 1, 1, 1, 1, 1$],$ \\
	$[$1, 0, 1, 1, 0, 0, 0, 0, 1$], [$1, 1, 0, 0, 1, 1, 1, 1, 0$],$ \\
	$[$1, 0, 1, 1, 0, 0, 0, 1, 1$], [$1, 1, 0, 0, 1, 1, 1, 0, 0$],$ \\
	$[$1, 0, 1, 1, 0, 0, 1, 1, 1$], [$1, 1, 0, 0, 1, 1, 0, 0, 0$],$ \\
	$[$1, 0, 1, 1, 0, 0, 1, 1, 0$], [$1, 1, 0, 0, 1, 1, 0, 0, 1$]]$ );}

\texttt{\$H= new Polytope(VERTICES=> $[$ \\
	$[$1, 1, 0, 0, 0, 0, 0, 0, 0$], [$1, -1, 0, 0, 0, 0, 0, 0, 0$],$ \\
	$[$1, 1, 0, 0, 0, 0, 0, 0, 1$], [$1, -1, 0, 0, 0, 0, 0, 0, -1$],$ \\
	$[$1, 1, 0, 0, 0, 0, 0, 1, 0$], [$1, -1, 0, 0, 0, 0, 0, -1, 0$],$ \\
	$[$1, 1, 0, 0, 0, 0, 1, 0, 0$], [$1, -1, 0, 0, 0, 0, -1, 0, 0$],$ \\
	$[$1, 1, 0, 0, 0, 0, 1, 0, 1$], [$1, -1, 0, 0, 0, 0, -1, 0, -1$],$ \\
	$[$1, 1, 0, 0, 0, 1, 0, 0, 0$], [$1, -1, 0, 0, 0, -1, 0, 0, 0$],$ \\
	$[$1, 1, 0, 0, 0, 1, 0, 0, 1$], [$1, -1, 0, 0, 0, -1, 0, 0, -1$],$ \\
	$[$1, 1, 0, 0, 0, 1, 0, 1, 0$], [$1, -1, 0, 0, 0, -1, 0, -1, 0$],$ \\
	$[$1, 1, 0, 0, 1, 0, 0, 0, 0$], [$1, -1, 0, 0, -1, 0, 0, 0, 0$],$ \\
	$[$1, 1, 0, 0, 1, 0, 0, 0, 1$], [$1, -1, 0, 0, -1, 0, 0, 0, -1$],$ \\
	$[$1, 1, 0, 0, 1, 0, 0, 1, 0$], [$1, -1, 0, 0, -1, 0, 0, -1, 0$],$ \\
	$[$1, 1, 0, 0, 1, 0, 1, 0, 0$], [$1, -1, 0, 0, -1, 0, -1, 0, 0$],$ \\
	$[$1, 1, 0, 0, 1, 0, 1, 0, 1$], [$1, -1, 0, 0, -1, 0, -1, 0, -1$],$ \\
	$[$1, 1, 0, 1, 0, 0, 0, 0, 0$], [$1, -1, 0, -1, 0, 0, 0, 0, 0$],$ \\
	$[$1, 1, 0, 1, 0, 0, 0, 0, 1$], [$1, -1, 0, -1, 0, 0, 0, 0, -1$],$ \\
	$[$1, 1, 0, 1, 0, 0, 0, 1, 0$], [$1, -1, 0, -1, 0, 0, 0, -1, 0$],$ \\
	$[$1, 1, 0, 1, 0, 0, 1, 0, 0$], [$1, -1, 0, -1, 0, 0, -1, 0, 0$],$ \\
	$[$1, 1, 0, 1, 0, 0, 1, 0, 1$], [$1, -1, 0, -1, 0, 0, -1, 0, -1$],$ \\
	$[$1, 1, 0, 1, 0, 1, 0, 0, 0$], [$1, -1, 0, -1, 0, -1, 0, 0, 0$],$ \\
	$[$1, 1, 0, 1, 0, 1, 0, 0, 1$], [$1, -1, 0, -1, 0, -1, 0, 0, -1$],$ \\
	$[$1, 1, 0, 1, 0, 1, 0, 1, 0$], [$1, -1, 0, -1, 0, -1, 0, -1, 0$],$ \\
	$[$1, 1, 1, 0, 0, 0, 0, 0, 0$], [$1, -1, -1, 0, 0, 0, 0, 0, 0$],$ \\
	$[$1, 1, 1, 0, 0, 0, 0, 0, 1$], [$1, -1, -1, 0, 0, 0, 0, 0, -1$],$ \\
	$[$1, 1, 1, 0, 0, 0, 0, 1, 0$], [$1, -1, -1, 0, 0, 0, 0, -1, 0$],$ \\
	$[$1, 1, 1, 0, 0, 0, 1, 0, 0$], [$1, -1, -1, 0, 0, 0, -1, 0, 0$],$ \\
	$[$1, 1, 1, 0, 0, 0, 1, 0, 1$], [$1, -1, -1, 0, 0, 0, -1, 0, -1$],$ \\
	$[$1, 1, 1, 0, 0, 1, 0, 0, 0$], [$1, -1, -1, 0, 0, -1, 0, 0, 0$],$ \\
	$[$1, 1, 1, 0, 0, 1, 0, 0, 1$], [$1, -1, -1, 0, 0, -1, 0, 0, -1$],$ \\
	$[$1, 1, 1, 0, 0, 1, 0, 1, 0$], [$1, -1, -1, 0, 0, -1, 0, -1, 0$],$ \\
	$[$1, 1, 1, 0, 1, 0, 0, 0, 0$], [$1, -1, -1, 0, -1, 0, 0, 0, 0$],$ \\
	$[$1, 1, 1, 0, 1, 0, 0, 0, 1$], [$1, -1, -1, 0, -1, 0, 0, 0, -1$],$ \\
	$[$1, 1, 1, 0, 1, 0, 0, 1, 0$], [$1, -1, -1, 0, -1, 0, 0, -1, 0$],$ \\
	$[$1, 1, 1, 0, 1, 0, 1, 0, 0$], [$1, -1, -1, 0, -1, 0, -1, 0, 0$],$ \\
	$[$1, 1, 1, 0, 1, 0, 1, 0, 1$], [$1, -1, -1, 0, -1, 0, -1, 0, -1$]]$ );}

\section{The polytope from Section~\ref{ex:LR}}\label{app:LR} The following is the \texttt{polymake} inequality description for the $12$-dimensional polytope from Example~\ref{ex:LR}.

\smallskip

\texttt{\$c= new Polytope(INEQUALITIES=>$[$\\
	$[$0, 1, 1, 1, 1, 1, 1, 1, 1, 1, 0, 1, 1$]$
	,\\
	$[$1, -1, -1, -1, -1, -1, -1, -1, -1, -1, 0, -1, -1$]$
	,\\
	$[$0, 1, 0, 1, 0, 1, 1, 1, 0, 1, 1, 0, 1$]$
	,\\
	$[$1, -1, 0, -1, 0, -1, -1, -1, 0, -1, -1, 0, -1$]$
	,\\
	$[$0, 0, 0, 0, 0, 1, 0, 0, 0, 1, 1, 0, 1$]$
	,\\
	$[$1, 0, 0, 0, 0, -1, 0, 0, 0, -1, -1, 0, -1$]$
	,\\
	$[$0, 1, 0, 0, 0, 0, 0, 0, 1, 0, 0, 0, 0$]$
	,\\
	$[$1, -1, 0, 0, 0, 0, 0, 0, -1, 0, 0, 0, 0$]$
	,\\
	$[$0, 1, 1, 1, 1, 1, 1, 1, 1, 1, 1, 1, 1$]$
	,\\
	$[$1, -1, -1, -1, -1, -1, -1, -1, -1, -1, -1, -1, -1$]$
	,\\
	$[$0, 1, 1, 1, 1, 1, 0, 0, 0, 0, 0, 1, 1$]$
	,\\
	$[$1, -1, -1, -1, -1, -1, 0, 0, 0, 0, 0, -1, -1$]$
	,\\
	$[$0, 1, 0, 0, 0, 0, 0, 0, 0, 0, 1, 0, 0$]$
	,\\
	$[$1, -1, 0, 0, 0, 0, 0, 0, 0, 0, -1, 0, 0$]$
	,\\
	$[$0, 1, 0, 0, 0, 0, 0, 0, 1, 0, 0, 0, 1$]$
	,\\
	$[$1, -1, 0, 0, 0, 0, 0, 0, -1, 0, 0, 0, -1$]$
	,\\
	$[$0, 1, 1, 0, 1, 0, 0, 0, 0, 0, 1, 0, 0$]$
	,\\
	$[$1, -1, -1, 0, -1, 0, 0, 0, 0, 0, -1, 0, 0$]$
	,\\
	$[$0, 1, 1, 1, 0, 1, 1, 0, 1, 1, 0, 1, 0$]$
	,\\
	$[$1, -1, -1, -1, 0, -1, -1, 0, -1, -1, 0, -1, 0$]$
	,\\
	$[$0, 0, 1, 0, 0, 0, 0, 1, 0, 0, 0, 0, 1$]$
	,\\
	$[$1, 0, -1, 0, 0, 0, 0, -1, 0, 0, 0, 0, -1$]$
	,\\
	$[$0, 1, 1, 1, 1, 1, 1, 1, 1, 1, 0, 1, 1$]$
	,\\
	$[$1, -1, -1, -1, -1, -1, -1, -1, -1, -1, 0, -1, -1$]$
	,\\
	$[$0, 1, 0, 0, 0, 0, 0, 0, 0, 0, 0, 0, 0$]$
	,\\
	$[$1, -1, 0, 0, 0, 0, 0, 0, 0, 0, 0, 0, 0$]$
	,\\
	$[$0, 1, 1, 1, 0, 0, 0, 0, 0, 1, 1, 0, 0$]$
	,\\
	$[$1, -1, -1, -1, 0, 0, 0, 0, 0, -1, -1, 0, 0$]$
	,\\
	$[$0, 1, 1, 1, 1, 1, 1, 1, 1, 1, 1, 1, 1$]$
	,\\
	$[$1, -1, -1, -1, -1, -1, -1, -1, -1, -1, -1, -1, -1$]$
	,\\
	$[$0, 1, 1, 1, 0, 0, 1, 1, 1, 1, 0, 1, 1$]$
	,\\
	$[$1, -1, -1, -1, 0, 0, -1, -1, -1, -1, 0, -1, -1$]$
	$]$);}


\end{document}